\title[A convergent scheme for a fourth order equation]{A convergent Lagrangian discretization for a nonlinear fourth order equation}
\author{Daniel Matthes}
\address{Daniel Matthes \\ Zentrum Mathematik \\ TU M\"unchen \\ Boltzmannstr. 3 \\ D-85748 Garching \\ Germany}
\email{matthes@ma.tum.de}
\author{Horst Osberger}
\address{Horst Osberger \\ Zentrum Mathematik \\ TU M\"unchen \\ Boltzmannstr. 3 \\ D-85748 Garching \\ Germany}
\email{osberger@ma.tum.de}
\thanks{This research was supported by the DFG Collaborative Research Center TRR 109, ``Discretization in Geometry and Dynamics''.}
\newcommand{\setR}{\mathbb{R}}
\newcommand{\setRnn}{\mathbb{R}_{\ge0}}
\newcommand{\setN}{\mathbb{N}}
\newcommand{\dd}{\,\mathrm{d}}
\newcommand{\dn}{\mathrm{d}}
\newcommand{\indy}{\mathbf{1}}
\newcommand{\eins}{\mathds{1}}
\newcommand{\theX}{\mathrm{X}}
\newcommand{\xvec}{\vec{\mathrm x}}
\newcommand{\yvec}{\vec{\mathrm y}}
\newcommand{\zvec}{\vec{\mathrm z}}
\newcommand{\vvec}{\vec{\mathrm v}}
\newcommand{\wvec}{\vec{\mathrm w}}
\newcommand{\spr}[2]{\left\langle#1,#2\right\rangle_\delta}
\newcommand{\nrm}[1]{\left\|#1\right\|_\delta}
\newcommand{\cf}{\mathbf{u}}
\newcommand{\cz}{\mathbf{z}}
\newcommand{\cX}{\mathbf{X}}
\newcommand{\theh}{\delta}
\newcommand{\vech}{{\vec\delta}}
\newcommand{\xspc}{\mathfrak{X}}
\newcommand{\xseq}{\mathfrak{x}}
\newcommand{\xseqN}{\xseq_\theh}
\newcommand{\xseqNN}{\xseq_\vech}
\newcommand{\dens}{\mathcal{P}(\Omega)}
\newcommand{\densN}{\mathcal{P}_\theh(\Omega)}
\newcommand{\densNN}{\mathcal{P}_\vech(\Omega)}
\newcommand{\tv}[1]{{\mathrm{TV}}\left[#1\right]}
\newcommand{\ti}[1]{\left\lbrace#1\right\rbrace_{\tau}}
\newcommand{\Wmat}{\mathrm{W}}
\newcommand{\ival}{{\mathbb{I}_K^0}}
\newcommand{\ivalp}{{\mathbb{I}_K^+}}
\newcommand{\hval}{{\mathbb{I}_K^{1/2}}}
\newcommand{\kmh}{{k-\frac{1}{2}}}
\newcommand{\kph}{{k+\frac{1}{2}}}
\newcommand{\kpd}{{k+\frac{3}{2}}}
\newcommand{\kmd}{{k-\frac{3}{2}}}
\newcommand{\kappm}{{\kappa-\frac12}}
\newcommand{\kappp}{{\kappa+\frac12}}
\newcommand{\Kmh}{{K-\frac{1}{2}}}
\newcommand{\imh}{{\frac{1}{2}}}
\newcommand{\thegrad}{\operatorname{grad}_{\mathbb W}}
\newcommand{\eps}{{\varepsilon}}
\newcommand{\wass}{\mathbb{W}}
\newcommand{\wassN}{\mathbb{W}_\delta}
\newcommand{\grad}{\partial_{\xvec}}
\newcommand{\wgrad}{\nabla_\delta}
\newcommand{\F}{\mathcal{F}} 
\newcommand{\Fz}{\mathbf{F}_\delta} 
\newcommand{\Fzo}{\mathbf{F}} 
\newcommand{\Fy}{\mathbf{F}_\Delta} 
\newcommand{\HF}{\mathcal{H}}
\newcommand{\HFz}{\mathbf{H}_\delta}
\newcommand{\olH}{\overline{\HF}}
\newcommand{\hatf}{\theta}
\newcommand{\baru}{\bar{u}}
\newcommand{\hatz}{\widehat{z}}
\newcommand{\hatu}{\widehat{u}}
\newcommand{\ee}{\mathbf{e}}
\newcommand{\intom}{\int_\Omega}
\newcommand{\err}{\mathrm{e}}
\newcommand{\tlo}{{\underline t}}
\newcommand{\thi}{{\overline t}}
\newcommand{\tint}{[\underline t,\overline t]}
\newcommand{\loc}{\text{loc}}
\newtheorem{thm}{Theorem}
\newtheorem{prp}[thm]{Proposition}
\newtheorem{lem}[thm]{Lemma}
\newtheorem{rmk}[thm]{Remark}
\newtheorem{xmp}[thm]{Example}
\begin{document}

\begin{abstract}
  A fully discrete Lagrangian scheme for numerical solution of 
  the nonlinear fourth order DLSS equation in one space dimension is analyzed.
  The discretization is based on the equation's gradient flow structure in the $L^2$-Wasserstein metric. 
  We prove that the discrete solutions are strictly positive and mass conserving.
  Further, they dissipate \emph{both} the Fisher information and the logarithmic entropy.
  Numerical experiments illustrate the practicability of the scheme.

  Our main result is a proof of convergence of fully discrete to weak solutions
  in the limit of vanishing mesh size.
  Convergence is obtained for arbitrary non-negative initial data with finite entropy,
  without any CFL type condition.
  The key ingredient in the proof is a discretized version of the classical entropy dissipation estimate.
\end{abstract}

\maketitle


\section{Introduction}
\subsection{The equation and its properties}
In this paper, we study a full discretization of the following initial boundary value problem 
on the one-dimensional interval $\Omega=[a,b]$:
\begin{align}
  \label{eq:dlss}
  \partial_tu+\partial_x\left(u\,\partial_x\left(\frac{\partial_{xx}\sqrt{u}}{\sqrt{u}}\right)\right) = 0& \quad \text{for $t>0$ and $x\in\Omega$}, \\
  \label{eq:bc}
  \partial_xu=0,\quad u\,\partial_x\left(\frac{\partial_{xx}\sqrt{u}}{\sqrt{u}}\right)=0& \quad\text{for $t>0$ and $x\in\partial\Omega$}, \\
  \label{eq:ic}
  u=u^0&\quad\text{at $t=0$}.
\end{align}
Equation \eqref{eq:dlss} is known as the \emph{DLSS} equation, 
where the acronym refers to Derrida, Lebowitz, Speer and Spohn, 
who introduced \eqref{eq:dlss} in \cite{DLSS1,DLSS2} for studying interface fluctations in the anchored Toom model.
In the context of semi-conductor physics, 
\eqref{eq:dlss} appears as a simplified \emph{quantum drift diffusion equation} \cite{Degond,Juengel}.

The analytical treatment of \eqref{eq:dlss} is far from trivial:
see e.g. \cite{BLS,Funique,GJT,GST,JMdlss,JPdlss} for results on existence and uniqueness of solutions in various different settings,
and \cite{CCTdlss,CTthin,CDGJ,GST,JMdlss,JTdecay,MMS} for qualitative and quantitative descriptions of the long-time behavior.
The main difficulty in the development of the time-global well-posedness theory has been 
that the nonlinear operator in \eqref{eq:dlss} is defined only for positive functions $u$,
but there is no maximum principle available 
which would provide an a priori positive lower bound on $u$.
Ironically, solutions are known to be $C^\infty$-smooth as long as they remain strictly positive \cite{BLS},
but the question if strict positivity of the initial datum $u^0$ is sufficient for that remains open,
despite much effort and some recent progress in that direction, see \cite{Fpositive}.
In order to deal with the general case --- allowing arbitrary non-negative initial data $u^0$ of finite entropy ---
a theory for non-negative weak solutions has been developed \cite{GST,JMdlss}
on grounds of the a priori regularity estimate
\begin{align}
  \label{int:apriori}
  \sqrt{u}\in L^2_\loc\big(\setR_+;H^2(\Omega)\big),
\end{align}
which gives a meaning to \eqref{eq:dlss} in the formally equivalent representation
\begin{align}
  \label{int:preweak}
  \partial_t u + \partial_{xxxx}u - \partial_{xx}\left(\partial_x\sqrt{u}\right)^2 = 0.
\end{align}

On the other hand, the problem \eqref{eq:dlss}--\eqref{eq:ic} has several remarkable structural properties,
and these eventually paved the way to a rigorous analysis.
We list some of those properties:
\begin{itemize}
\item The evolution is mass preserving,
  \begin{align*}
    \int_a^b u(t;x)\dd x = M := \int_a^b u^0(x)\dd x \quad \text{for all $t>0$}.
  \end{align*}
\item There are infinitely many (formal) Lyapunov functionals \cite{BLS,CCTdlss,JMalg}.
  The two most important ones are 
  the (logarithmic) \emph{entropy},
  \begin{align}
    \label{eq:intro.entropy}
    \HF(u) = \int_a^bu\ln u\dd x - \HF_0 \quad \text{with} \quad \HF_0 = M\ln\Big(\frac{M}{b-a}\Big),
  \end{align}
  and the \emph{Fisher information},
  \begin{align}
    \label{eq:intro.fisher}
    \F(u) = \int_a^b \big(\partial_x\sqrt{u}\big)^2\dd x.
  \end{align}
\item The Fisher information is more than just a Lyapunov functional:
  in \cite{GST}, it has been shown that \eqref{eq:dlss}\&\eqref{eq:bc} is a gradient flow in the potential landscape of $\F$ 
  with respect to the $L^2$-Wasserstein metric $\wass$. 
  That is, formally one can write \eqref{eq:dlss}\&\eqref{eq:bc} as
  \begin{align}
    \label{eq:gradflow}
    \partial_t u = -\thegrad\F(u).
  \end{align}
\item Also $\HF$ is not ``an arbitrary'' Lyapunov functional:
  the $L^2$-Wasserstein gradient flow of $\HF$ is the \emph{heat equation} \cite{JKO},
  \begin{align}
    \label{eq:heat}
    \partial_s v = -\thegrad\HF(v) = \partial_{xx}v,
  \end{align}
  and the
  Fisher information $\F$ equals the dissipation of $\HF$ along its own gradient flow,
  \begin{align}
    \label{eq:magic}
    \F(v(s)) = -\frac{\dn}{\dd s}\HF(v(s)).
  \end{align}
  In view of \eqref{eq:gradflow}, 
  this relation makes the DLSS equation the ``big brother'' of the heat equation, 
  see \cite{DMfourth,MMS} for structural consequences.
\end{itemize}

\subsection{Fully discrete approximation}
For the numerical approximation of solutions to \eqref{eq:dlss}--\eqref{eq:ic}, 
it is natural to ask for structure preserving discretizations that inherit at least some of the nice properties listed above.
At the very least, the scheme should produce non-negative (preferably positive) discrete solutions,
but there is no reason to expect that behavior from a standard discretization approach.
Several (semi-)discretizations for \eqref{eq:dlss}--\eqref{eq:ic} that guarantee positivity
have been proposed in the literature \cite{BEJnum,CJTnum,JPnum,JuVi}.
In all of them, positivity actually appears as a consequence of another, more fundamental feature:
each of these schemes also inherits a Lyapunov functional, 
either a logarithmic/power-type entropy \cite{BEJnum,CJTnum,JPnum},
or a variant of the Fisher information \cite{BEJnum,DMMnum,JuVi}.
An exception is the discretization from \cite{DMMnum},
which preserves the Lagrangian representation of \eqref{eq:dlss}, see below, and thus enforces positivity by construction.
Apparently, at least \emph{some} structure preservation seems necessary to obtain an acceptable numerical scheme.

Here we follow further the ansatz from \cite{DMMnum}, which lead to a discretization with a very rich structure:
the scheme is positivity and mass preserving, it dissipates the Fisher information,
it has the same Lagrangian structure as \eqref{eq:dlss}, 
and it even inherits (in a certain sense) the gradient flow structure \eqref{eq:gradflow}.
By a small change of that discretization, we obtain a new scheme which still has all of these properties,
but in addition also dissipates the logarithmic entropy.
Thus, we have \emph{two} discrete Lyapunov functionals at our disposal,
and the interplay between these allows us to give a proof of convergence in the limit of vanishing mesh size.

\emph{We emphasize that our scheme is the first one to preserve more than one Lyapunov functional for \eqref{eq:dlss},
  and it is the only fully discrete scheme for which a rigorous convergence analysis is available.}

Below, we give the ``pragmatic'' definition of our full discretization, which is actually very simple.
In Section \ref{sec:discretization}, we show how this scheme arises 
from a structure preserving discretization of the gradient flow structure.
The starting point is the \emph{Lagrangian representation} of \eqref{eq:dlss}\&\eqref{eq:bc}.
Since each $u(t;\cdot)$ is of mass $M$, there is a Lagrangian map $\theX(t;\cdot):[0,M]\to\Omega$
--- the so-called \emph{pseudo-inverse distribution function} of $u(t;\cdot)$ ---
such that
\begin{align}
  \label{int:pseudo}
  \xi = \int_0^{\theX(t;\xi)}u(t;x)\dd x, \quad \text{for each $\xi\in[0,M]$}.
\end{align}
Written in terms of $\theX$, the Wasserstein gradient flow \eqref{eq:gradflow} for $\F$ 
turns into an $L^2$-gradient flow for
\begin{align*}
  \Fzo(\theX) = \int_0^M \left[\partial_\xi\left(\frac1{\partial_\xi\theX}\right)\right]^2\dd\xi,
\end{align*}
that is,
\begin{align}
  \label{eq:zeq}
  \partial_t\theX = \partial_\xi\big(Z^2\partial_{\xi\xi}Z\big), 
  \quad \text{where} \quad Z(t;\xi):=\frac1{\partial_\xi\theX(t;\xi)}=u\big(t;\theX(t;\xi)\big). 
\end{align}
At this point, a standard discretization of \eqref{eq:zeq} with parameter $\Delta=(\tau;\delta)$ is performed:
we use the implicit Euler method for time discretization with fixed time step $\tau>0$, 
and central finite differences for equidistant discretization on the mass space $[0,M]$ with mesh width $\delta>0$.
More explicitly: denote by $\xvec_\Delta=(x^n_k)$ a fully discrete solution on the $\Delta$-mesh,
so that $x^n_k$ approximates $\theX(n\tau;k\delta)$, 
then the $x^n_k$ satisfy
\begin{align}
  \label{eq:dgf}
  \frac{x^n_k-x^{n-1}_k}\tau 
  = \frac1\delta\left[
    (z^n_\kph)^2\left(\frac{z^n_{\kpd}-2z^n_\kph+z^n_\kmh}{\delta^2}\right)
    -(z^n_\kmh)^2\left(\frac{z^n_{\kph}-2z^n_\kmh+z^n_\kmd}{\delta^2}\right)
  \right],
\end{align}
where the values $z^n_{\ell-\frac12} = \delta/(x^n_\ell-x^n_{\ell-1})$ are associated to the mid-points of the spatial grid.
At each time step $n\in\setN$, $\xvec_\Delta^n=(x_1^n,\ldots,x_{K-1}^n)$ approximates a Lagrangian map, 
so we assume that $\xvec_\Delta^n$ is monotone, i.e., $x^n_k>x^n_{k-1}$,
and in accordance with \eqref{int:pseudo}, 
we associate to $\xvec_\Delta^n$ a piecewise constant function $\baru_\Delta^n:\Omega\to\setR_+$ 
with
\begin{align*}
  \baru_\Delta^n (x) = \frac\delta{x^n_k-x^n_{k-1}} \quad \text{for $x^n_{k-1}<x<x^n_k$}.
\end{align*}
As replacements for the entropy $\HF$ and the Fisher information $\F$,
we introduce
\begin{align*}
  \HFz(\xvec^n) = \delta\sum_{k=1}^{K}\log z^n_\kmh,
  \qquad
  \Fz(\xvec^n) = \sum_{k=1}^{K-1}\left(\frac{z^n_\kph-z^n_\kmh}\delta\right)^2.
\end{align*}
These choices are made such that $\HFz$ is the restriction of $\HF$, i.e., $\HFz(\xvec^n_\Delta)=\HF(\baru_\Delta^n)$,
and such that $\Fz$ is related to $\HFz$ in the same way \eqref{eq:magic} as $\F$ is related to $\HF$;
see Section \ref{sec:discretization} for datails.

\subsection{Results}
Our first result is concerned with qualitative properties of the discrete solutions.
For the moment, fix a discretization parameter $\Delta=(\tau;\delta)$.
\begin{thm}
  \label{thm:pre}
 From any monotone discrete initial datum $\xvec_\Delta^0$, 
  a sequence of monotone $\xvec_\Delta^n$ satisfying \eqref{eq:dgf} can be constructed 
  by inductively defining $\xvec_\Delta^n$ as a global minimizer of
  \begin{align*}
    \xvec\mapsto\frac\delta{2\tau}\sum_k \big(x_k-x_k^{n-1})^2 + \Fz(\xvec).
  \end{align*}
  This sequence of vectors $\xvec_\Delta^n$ and the associated densities $\baru_\Delta^n$
  have the following properties:
  \begin{itemize}
  \item \emph{Positivity:} $\baru_\Delta^n$ is a strictly positive function.
  \item \emph{Mass conservation:} $\baru_\Delta^n$ has mass equal to $M$.
  \item \emph{Dissipation:} Both the entropy and the discrete Fisher information are dissipated,
    \begin{align*}
      \HFz(\xvec_\Delta^n)\le\HFz(\xvec_\Delta^{n-1}) \quad\text{and}\quad \Fz(\xvec_\Delta^n)\le\Fz(\xvec_\Delta^{n-1}).
    \end{align*}
  \item \emph{Equilibration:} There is a constant $r>0$ only depending on $b-a$ such that
    \begin{align}
      \label{eq:equilibrate}
      \HFz(\xvec_\Delta^n) \le \HFz(\xvec_\Delta^0)e^{-r n\tau}.
    \end{align}
  \end{itemize}
\end{thm}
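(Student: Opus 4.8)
The plan is to obtain $\xvec_\Delta^n$ as the interior minimizer of the functional in the statement, to identify \eqref{eq:dgf} as its Euler--Lagrange equation, and to read off positivity, mass conservation and the two dissipation inequalities from this variational characterization; the exponential decay \eqref{eq:equilibrate} then follows by iterating a quantitative one‑step estimate. Write $\Phi^{n-1}(\xvec):=\frac{\delta}{2\tau}\sum_k(x_k-x_k^{n-1})^2+\Fz(\xvec)$, where $\xvec=(x_1,\dots,x_{K-1})$ and $x_0=a$, $x_K=b$ are fixed, and let $\mathcal{C}$ be the open cone of strictly monotone vectors. Since all admissible $x_k$ lie in $[a,b]$, the closure $\overline{\mathcal{C}}=\{a=x_0\le\dots\le x_K=b\}$ is compact; on it $\Phi^{n-1}$ is lower semicontinuous, and it equals $+\infty$ on $\overline{\mathcal{C}}\setminus\mathcal{C}$, because if a gap $x_k-x_{k-1}$ collapses then $z^n_\kmh$ diverges, and, since the gaps sum to the fixed length $b-a>0$ so that not all of them can collapse, a collapsing gap is always adjacent to a non‑collapsing one, which forces $\Fz$ to blow up. Hence a minimizer $\xvec_\Delta^n$ exists and lies in $\mathcal{C}$ (using inductively that $\xvec_\Delta^{n-1}\in\mathcal{C}$, so that $\Phi^{n-1}(\xvec_\Delta^{n-1})=\Fz(\xvec_\Delta^{n-1})<\infty$ keeps the minimum finite); in particular $\baru_\Delta^n$ takes only positive values. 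Mass conservation is automatic: $\int_\Omega\baru_\Delta^n=\sum_k\frac{\delta}{x^n_k-x^n_{k-1}}(x^n_k-x^n_{k-1})=K\delta=M$.

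Being an interior critical point, $\xvec_\Delta^n$ satisfies $\partial_{x_k}\Phi^{n-1}(\xvec_\Delta^n)=0$ for all $k$. Differentiating $\Fz$ by the chain rule with $\partial_{x_k}z_\kph=(z_\kph)^2/\delta$ and $\partial_{x_k}z_\kmh=-(z_\kmh)^2/\delta$, one checks that $\partial_{x_k}\Fz(\xvec)$ equals, up to a fixed positive constant, the discrete divergence $Q_k-Q_{k-1}$ with $Q_k:=(z_\kph)^2(z_\kpd-2z_\kph+z_\kmh)$ — including the cases $k\in\{1,K-1\}$, where it encodes the discrete no‑flux condition. Substituting into the critical point relation gives \eqref{eq:dgf} (the precise handling of the constants $\delta,\tau$ being deferred to Section \ref{sec:discretization}). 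Dissipation of the Fisher information is now immediate: testing the minimality of $\xvec_\Delta^n$ against the competitor $\xvec_\Delta^{n-1}$ yields $\frac{\delta}{2\tau}\sum_k(x^n_k-x^{n-1}_k)^2+\Fz(\xvec_\Delta^n)\le\Phi^{n-1}(\xvec_\Delta^{n-1})=\Fz(\xvec_\Delta^{n-1})$, and dropping the nonnegative quadratic term gives $\Fz(\xvec_\Delta^n)\le\Fz(\xvec_\Delta^{n-1})$.

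The dissipation of the entropy is the crux, and the argument is a concrete flow‑interchange estimate. The function $\HFz(\xvec)=\mathrm{const}-\delta\sum_{k=1}^K\log(x_k-x_{k-1})$ is convex on the convex set $\mathcal{C}$, so $\HFz(\xvec_\Delta^n)-\HFz(\xvec_\Delta^{n-1})\le\nabla\HFz(\xvec_\Delta^n)\cdot(\xvec_\Delta^n-\xvec_\Delta^{n-1})$; inserting the Euler--Lagrange relation $\xvec_\Delta^n-\xvec_\Delta^{n-1}=-\tfrac\tau\delta\nabla\Fz(\xvec_\Delta^n)$ converts the right‑hand side into a negative multiple of $\nabla\HFz(\xvec_\Delta^n)\cdot\nabla\Fz(\xvec_\Delta^n)$. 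With $\partial_{x_k}\HFz=z^n_\kph-z^n_\kmh$ and the description of $\partial_{x_k}\Fz$ above, a discrete summation by parts, whose boundary contributions fit the same pattern by virtue of the discrete no‑flux condition, produces
\begin{align*}
  \nabla\HFz(\xvec_\Delta^n)\cdot\nabla\Fz(\xvec_\Delta^n)
  \;=\;c\sum_k(z^n_\kph)^2\big(z^n_\kpd-2z^n_\kph+z^n_\kmh\big)^2\;\ge\;0
\end{align*}
for some $c>0$, whence $\HFz(\xvec_\Delta^n)\le\HFz(\xvec_\Delta^{n-1})$. This manifestly nonnegative sum of squares is the discrete counterpart of the entropy‑dissipation identity \eqref{eq:magic} linking $\F$, $\HF$ and the heat flow.

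For the equilibration one reads the previous computation as the one‑step inequality $\HFz(\xvec_\Delta^n)-\HFz(\xvec_\Delta^{n-1})\le-\tau\,\mathfrak{D}(\xvec_\Delta^n)$, where $\mathfrak{D}(\xvec):=\delta^{-1}\nabla\HFz(\xvec)\cdot\nabla\Fz(\xvec)\ge0$ is the discrete entropy dissipation; it then remains to establish a discrete functional inequality $\mathfrak{D}(\xvec)\ge r\,\HFz(\xvec)$ on all of $\mathcal{C}$, with a constant $r>0$ depending only on $b-a$ — the discrete analogue of the classical fact that on a bounded interval the DLSS entropy production controls the logarithmic entropy. Granting it, $(1+r\tau)\,\HFz(\xvec_\Delta^n)\le\HFz(\xvec_\Delta^{n-1})$, and iteration in $n$ then gives \eqref{eq:equilibrate}. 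The routine parts are the existence and Euler--Lagrange computations of the first two steps; the real work is, first, the summation‑by‑parts identity of Step 3 — realizing the discretized entropy dissipation as a sum of squares, which simultaneously pins down the correct discrete boundary conditions — and, second, proving $\mathfrak{D}\ge r\,\HFz$ with a constant that is uniform in the mesh size and the time step.
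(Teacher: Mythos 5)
Your treatment of the first four bullet points is correct and matches the paper's approach in substance. The existence/positivity argument (compactness of the closure of the monotonicity cone, blow-up of $\Fz$ on its relative boundary) is essentially Lemma \ref{lem:cfl} — though one should argue slightly more carefully that a collapsing gap forces a pair of adjacent intervals with divergent ratio, rather than the collapsing gap itself being adjacent to a non-collapsing one — and the Euler--Lagrange identification of \eqref{eq:dgf}, mass conservation, and the Fisher dissipation by comparison with the competitor $\xvec_\Delta^{n-1}$ are exactly as in Proposition \ref{prp:wellposed} and Lemma \ref{lem:dF_bounded}. Your flow-interchange argument for the entropy dissipation, exploiting convexity of $\HFz$ and the summation-by-parts identity $\spr{\wgrad\HFz}{\wgrad\Fz} = \delta\sum_\kappa z_\kappa^2\big(\tfrac{z_{\kappa+1}-2z_\kappa+z_{\kappa-1}}{\delta^2}\big)^2 \ge 0$, is precisely Lemma \ref{lem:dFdH_bounded}.

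The equilibration step, however, has a genuine gap. You reduce \eqref{eq:equilibrate} to a single one-step inequality $\HFz(\xvec_\Delta^n)-\HFz(\xvec_\Delta^{n-1})\le-\tau\,\mathfrak{D}(\xvec_\Delta^n)$ plus a discrete functional inequality $\mathfrak{D}(\xvec)\ge r\,\HFz(\xvec)$ with $r$ uniform in $\delta$ and $\tau$ — and then explicitly leave the latter unproven (``Granting it \ldots the real work is \ldots''). That inequality \emph{is} the theorem: without it, \eqref{eq:equilibrate} does not follow. The paper does not posit it as a black box but establishes it through a concrete chain in the proofs of Lemma \ref{lem:BVbound} and Theorem \ref{thm:pre}: (i) the discrete dissipation $\mathfrak{D}(\xvec_\Delta^n)$ controls the $L^4$-norm of $\partial_\xi\hatz_\Delta^n$ via a discrete integration by parts and Young's inequality; (ii) together with \eqref{eq:dFdH_bounded}, this controls $\tv{\partial_x\sqrt{\hatu_\Delta^n}}^2$ for the \emph{piecewise affine} interpolant $\hatu$; (iii) since $\partial_x\sqrt{\hatu}$ vanishes near $\partial\Omega$, the logarithmic Sobolev inequality on the bounded interval bounds $\HF(\hatu_\Delta^n)$ by $\tv{\partial_x\sqrt{\hatu_\Delta^n}}^2$; and (iv) the elementary interpolation inequality \eqref{eq:Hinterpol}, $\HFz(\xvec)=\HF(\baru)\le\HF(\hatu)$, closes the loop, yielding $(1+\tfrac{\pi^2\tau}{5(b-a)^4})\HFz(\xvec_\Delta^n)\le\HFz(\xvec_\Delta^{n-1})$. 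Note in particular that the argument is forced to pass through the auxiliary piecewise affine density $\hatu$ rather than the piecewise constant $\baru$ (whose square root has no derivative to put in the log-Sobolev inequality), and needs the comparison \eqref{eq:Hinterpol} to transfer the estimate back to $\HFz$ — neither of these devices is visible in your proposal. Supplying this chain, or an independent proof of $\mathfrak{D}\ge r\,\HFz$, is what your argument is missing.
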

Some of these properties follow immediately from the construction,
while others (like the equilibration) are difficult to prove.
Note that even well-posedness (which involves existence of a monotone minimizer for the functional)
is a non-trivial claim.

To state our main result about convergence, 
we need to introduce the time-interpolation $\ti{\baru_\Delta}:\setR_+\times\Omega\to\setR_+$,
which is given by
\begin{align*}
  \ti{\baru_\Delta}(t;x) = \baru_\Delta^n(x) \quad \text{for $(n-1)\tau<t\le n\tau$}.
\end{align*}
Further, $\Delta$ symbolizes a whole sequence of mesh parameters from now on,
and we write $\Delta\to0$ to indicate that $\tau\to0$ and $\delta\to0$ simultaneously.
%
%
\begin{thm}
  \label{thm:main}
  Let a non-negative initial condition $u^0$ with $\HF(u^0)<\infty$ be given.
  Choose initial conditions $\xvec_\Delta^0$ such that $\baru^0_\Delta$ converges to $u^0$ weakly as $\Delta\to0$,
  and 
  \begin{align}
    \label{eq:genhypo}
    \olH:=\sup_\Delta\HFz(\xvec^0_\Delta)<\infty \quad\text{and}\quad
    \lim_{\Delta\to0}(\tau+\delta)\Fz(\xvec^0_\Delta)=0.
  \end{align}
  For each $\Delta$, construct a discrete approximation $\xvec_\Delta$ according to the procedure described in Theorem \ref{thm:pre} above.
  Then, there are a subsequence with $\Delta\to0$ and a limit function $u_*\in C(\setR_+\times\Omega)$ such that:
  \begin{itemize}
  \item $\ti{\baru_\Delta}$ converges to $u_*$ locally uniformly on $\setR_+\times\Omega$,
  \item $\sqrt{u_*}\in L^2_\loc(\setRnn;H^1(\Omega))$, 
  \item there are non-increasing functions $f,h:\setR_+\to\setR$ such that
    $\F(u_*(t))=f(t)$ and $\HF(u_*(t))=h(t)$ for a.e.\ $t>0$,
    and additionally $h(t)\le\olH e^{-rt}$ with the constant $r>0$ from \eqref{eq:equilibrate},
  \item $u_*$ satisfies the following weak formulation of \eqref{eq:dlss}\&\eqref{eq:bc}, see \eqref{int:preweak}:
    \begin{align}
      \label{eq:weakdlss}
      \int_0^\infty\intom\big[\partial_t\varphi\,u_* + \partial_{xxx}\varphi\,\partial_xu_* + 4\partial_{xx}\varphi\,\big(\partial_x\sqrt{u_*}\big)^2 \big]\dd x\dd t 
      + \intom \varphi(0,x)u^0(x)\dd x= 0
    \end{align}
    for every test function $\varphi\in C^\infty_c(\setRnn\times\Omega)$ satisfying $\partial_x\varphi(t;a)=\partial_x\varphi(t;b)=0$.
  \end{itemize}
\end{thm}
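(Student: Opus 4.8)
The plan is to establish the convergence $\ti{\baru_\Delta}\to u_*$ by a compactness argument, and then to pass to the limit in a discrete weak formulation of \eqref{eq:dgf}. The backbone of all estimates is a discrete analogue of the classical entropy dissipation relation: along the scheme, the decay of $\HFz$ per time step controls a discrete version of $\int_0^t\intom|\partial_{xx}\sqrt{u}|^2\dd x\dd s$ (equivalently, a second-difference quantity of the $z^n_\kmh$), and the decay of $\Fz$ controls further higher-order quantities. Concretely, I would first derive from the minimizing-movement characterization in Theorem \ref{thm:pre} the fundamental estimate that $\sum_n\tau$ times a discrete $H^2$-seminorm of $\sqrt{\baru_\Delta^n}$ is bounded by $\HFz(\xvec_\Delta^0)-\lim_n\HFz(\xvec_\Delta^n)\le\olH$, uniformly in $\Delta$; together with $(\tau+\delta)\Fz(\xvec_\Delta^0)\to0$ this is exactly what \eqref{eq:genhypo} is tailored to feed. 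From the boundedness of $\HFz$ one gets that the $\baru_\Delta^n$ have mass $M$ and uniformly bounded entropy, hence are weakly precompact in $L^1(\Omega)$ for each $t$; from the discrete $H^2$-bound on $\sqrt{\baru_\Delta^n}$ one upgrades this to precompactness of $\sqrt{\ti{\baru_\Delta}}$ in, say, $L^2_\loc(\setR_+;H^1(\Omega))$ and even to equi-continuity in $x$.

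Next I would establish time regularity. The Euler step \eqref{eq:dgf} gives $\|\xvec_\Delta^n-\xvec_\Delta^{n-1}\|_\delta^2/\tau^2\lesssim$ (a discrete dissipation term), so that $\sum_n\tau^{-1}\|\xvec_\Delta^n-\xvec_\Delta^{n-1}\|_\delta^2$ is bounded by the drop in $\Fz$; in Wasserstein terms this says $t\mapsto\baru_\Delta$ is uniformly $\tfrac12$-Hölder into $(\dens,\wass)$. Combining the spatial equi-continuity of $\sqrt{\ti{\baru_\Delta}}$ with this Wasserstein time-modulus, a refined Arzelà–Ascoli / Aubin–Lions argument yields a subsequence along which $\ti{\baru_\Delta}$ converges locally uniformly on $\setR_+\times\Omega$ to some $u_*\in C(\setR_+\times\Omega)$, with $\sqrt{u_*}\in L^2_\loc(\setRnn;H^1(\Omega))$. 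The monotonicity and the bounds $h(t)\le\olH e^{-rt}$ on the limiting entropy follow by lower semicontinuity of $\HF$ and $\F$ under weak convergence, applied to the discrete dissipation inequalities and to \eqref{eq:equilibrate}; the existence of the non-increasing representatives $f,h$ comes from the fact that $\HFz(\xvec_\Delta^n)$ and $\Fz(\xvec_\Delta^n)$ are monotone in $n$ and converge pointwise a.e.\ in $t$.

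It remains to identify the limit equation. I would multiply \eqref{eq:dgf} by discrete test values $\varphi(n\tau;x^n_k)$ (or rather by $\varphi$ evaluated at grid points), sum by parts twice in $k$ and once in $n$, and recognize the resulting sums as Riemann sums for the terms in \eqref{eq:weakdlss}: the key algebraic point is that the Lagrangian flux $Z^2\partial_{\xi\xi}Z$ corresponds, after returning to the Eulerian variable via $z^n_\kmh=\delta/(x^n_k-x^n_{k-1})$, to $\partial_{xxx}u-\partial_{xx}(\partial_x\sqrt u)^2$ up to the combinatorial constant $4$ appearing in \eqref{eq:weakdlss}. The already-established strong convergence of $\sqrt{\ti{\baru_\Delta}}$ in $L^2_tH^1_x$ and uniform convergence of $\ti{\baru_\Delta}$ handle the nonlinear term $(\partial_x\sqrt{u_*})^2$, while the linear terms need only weak convergence plus the discrete $H^2$-bound; consistency of the discrete test functions (with the Neumann condition $\partial_x\varphi=0$ at $\partial\Omega$ absorbing the boundary terms from summation by parts, using the boundary conditions \eqref{eq:bc}) closes the argument.

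I expect the main obstacle to be the derivation of the discrete entropy dissipation estimate with the \emph{right} uniform-in-$\Delta$ constant, i.e.\ controlling a genuine discrete $H^2$-seminorm of $\sqrt{\baru_\Delta^n}$ rather than of $z^n_\kmh=\baru_\Delta^n$ itself — this is the step where the small modification of the scheme from \cite{DMMnum} (making $\HFz$ an exact restriction of $\HF$) is essential, and where one must carefully handle the discrepancy between differences of $z$ and of $\sqrt z$ on non-uniform Eulerian grids. A secondary difficulty is the consistency analysis of the twice-summed-by-parts flux: tracking how the equidistant mass grid interacts with the non-equidistant spatial grid, and verifying that the error terms are controlled by exactly the dissipation quantities that \eqref{eq:genhypo} forces to vanish.
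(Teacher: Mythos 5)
Your roadmap is essentially the paper's: the scheme's dissipation of $\HFz$ gives a summed-in-time discrete higher-order estimate (the paper's Lemma \ref{lem:dFdH_bounded}), this bootstraps to an $L^4$ bound on $\partial_\xi\hatz$ and a BV bound on $\partial_x\sqrt{\hatu_\Delta}$, the energy inequality gives a uniform $1/2$-H\"older modulus in the Wasserstein metric, a metric Arzel\`a--Ascoli argument plus Gagliardo--Nirenberg yields local uniform convergence of $\ti{\baru_\Delta}$ to $u_*$ and strong $L^2_{\mathrm{loc}}H^1$ convergence of $\ti{\sqrt{\hatu_\Delta}}$, and finally one passes to the limit in a discrete weak formulation. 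So the structure is right. There are, however, two places where your plan as written leaves a genuine hole.

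First, you claim that the statement $\F(u_*(t))=f(t)$ for a non-increasing $f$ follows from ``lower semicontinuity of $\HF$ and $\F$ under weak convergence'' together with pointwise a.e.\ convergence of $\Fz(\xvec_\Delta^n)$. Lower semicontinuity alone gives only $\F(u_*(t))\le\liminf$; to promote this to an equality you need the \emph{strong} $H^1$ convergence of $\sqrt{\hatu_\Delta}$ (which you do establish) combined with an extra consistency estimate showing $\ti{\Fz(\xvec_\Delta)}$ is $L^1_t$-close to $2\int(\partial_x\ti{\sqrt{\hatu_\Delta}})^2$: the discrete $\Fz$ is a sum of divided differences of $z$, not directly of $\sqrt{\hatu}$, and the discrepancy is controlled by exactly the oscillation bounds \eqref{eq:weakoscillation}--\eqref{eq:oscillation}. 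Then Helly's theorem applied to the monotone functions $\ti{\Fz(\xvec_\Delta)}$ produces $f$. Without this identification step the existence of a monotone representative $f$ with $f(t)=\F(u_*(t))$ a.e.\ does not follow.

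Second, the limit-equation passage is sketched in Eulerian form: ``multiply \eqref{eq:dgf} by $\varphi(n\tau;x^n_k)$, sum by parts twice in $k$.'' The scheme is a Lagrangian $L^2$-gradient flow, not an Eulerian conservation law on a fixed grid, so two summations by parts against a test function sampled at the moving nodes do not produce Riemann sums of the Eulerian integrands in \eqref{eq:weakdlss} in any automatic way. The paper instead pairs the Euler step with the Lagrangian gradient $\rho'(\xvec_\Delta^n)$ of a spatial test function, performs \emph{one} discrete summation by parts in $\xi$ (producing a flux in conservative form), and then compares each of the four resulting terms to the target integrals via quantitative change-of-variables bounds; the errors are again absorbed by the $L^4$ and oscillation estimates. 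This is not a cosmetic difference: the interaction between the equidistant mass grid and the non-equidistant spatial grid is precisely where the nontrivial content of the consistency analysis lives, and your proposal only acknowledges it as a ``secondary difficulty'' without a mechanism to control it.
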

\begin{rmk}
  \begin{enumerate}
  \item \emph{Quality of convergence:}
    Since $\ti{\baru_\Delta}$ is piecewise constant in space and time,
    uniform convergence is obviously the best kind of convergence that can be achieved.
  \item \emph{Rate of convergence:}
    The scheme \eqref{eq:dgf} is formally consistent of order $\tau+\delta^2$, see Proposition \ref{prp:consist}, 
    and this is also the observed rate of convergence in numerical experiments
    with smooth initial data $u^0$, see Section \ref{sct:experiments}.
  \item \emph{Initial condition:}   
    We emphasize that our only hypothesis on $u^0$ is $\HF(u^0)<\infty$, 
    which allows the same general initial conditions as in \cite{GST,JMdlss}.
    If $\F(u^0)$ happens to be finite, and also $\sup_\Delta\Fz(\xvec_\Delta^0)<\infty$,
    then the uniform convergence of $\ti{\baru_\Delta}$ holds up to $t=0$.
  \item \emph{Long time behavior:}
    By means of the Csiszar-Kullback inequality, the exponential decay of $\HF(u_*(t))$ to zero implies 
    exponential convergence of $u_*$ to the constant function $u_\infty\equiv M/(b-a)$ in $L^1(\Omega)$.
  \item \emph{No uniqueness:}
    Since our notion of solution is too weak to apply the uniqueness result from \cite{Funique},
    we cannot exclude that different subsequences of $\ti{u_\Delta}$ converge to different limits.
  \end{enumerate}
\end{rmk}
The idea to derive numerical discretizations for solution of Wasserstein gradient flows from the Lagrangian representation
is not new in the literature, see e.g.\ \cite{Kinderlehrer} for a general treatise.
Several practical schemes have been developed on grounds 
of the Lagrangian representation for this class of evolution problems,
mainly for second-order diffusion equations \cite{Budd,BCW,MacCamy,Russo},
but also for chemotaxis systems \cite{BCC},
for non-local aggregation equations \cite{CarM,Mary}, 
and for variants of the Boltzmann equation \cite{GosT2}.
For certain nonlinear fourth order equations, Lagrangian numerical schemes have been developed as well,
e.g., for the Hele-Shaw flow \cite{Naldi} and for a class of thin film equations \cite{GosT2}.
On the other hand, a rigorous analysis of stability and convergence of the \emph{fully discrete} schemes is rare 
and apparently limited to the case of nonlinear diffusion in one space dimension, see \cite{GosT,dde}.
There are, however, results available for \emph{semi-discrete} Lagrangian approximations,
see e.g. \cite{ALS,Evans}.

The primary challenge in our convergence analysis is to carry out all estimates
under \emph{no additional assumptions on the regularity} of the limit solution $u_*$.
In particular, we do not exclude a priori the formation of zeros --- and the induced loss of regularity --- in the limit $u_*$,
since this cannot be excluded by the existing theory.
Also, we allow extremely general initial conditions $u^0$.
Without sufficient a priori smoothness, 
we cannot simply use Taylor approximations and the like to estimate the difference between $\ti{\baru_\Delta}$ and $u_*$.
Instead, we are forced to derive new a priori estimates directly from the scheme, using our two Lyapunov functionals.

On the technical level, the main difficulty is that our scheme is \emph{fully discrete}, 
which means that we are working with spatial difference quotients instead of derivatives.
Lacking a discrete chain rule, 
the derivation of the relevant estimates turns out to be much harder than for the original problem \eqref{eq:dlss}--\eqref{eq:ic}.
For instance, we are able to prove a compactness estimate for $\baru_\Delta$, but not for its inverse distribution function,
although both estimates would be equivalent in a smooth setting.
This forces us to switch back and forth between 
the original \eqref{eq:dlss} and the Lagrangian \eqref{eq:zeq} formulation of the DLSS equation. 


We further remark that the convergence of a family of gradient flows to a limiting gradient flow
has been thoroughly investigated on a very abstract level, see e.g. in \cite{AGS,Serfaty},
using methods of $\Gamma$-convergence.
Unfortunately, these appealing abstract results would not help to simplify our proof significantly,
since the verification of their main hypothesis ($\Gamma$-convergence of the subdifferentials)
is essentially equivalent to the derivation of the a priori estimates, which is the main part of our work.
Therefore, we decided to give a ``hands-on proof'', 
which requires only very few elements from the general theory of metric gradient flows.

\subsection{Structure of the paper}
We start with a description of our Lagrangian discretization in Section \ref{sec:discretization};
the fully discrete scheme is defined in Subsection \ref{sec:MM}.
In Section \ref{sec:apriori}, we derive various a priori estimates on the fully discrete solutions.
This leads to the main convergence results in Propositions \ref{prp:convergence1} and \eqref{prp:convergence2},
showing the existence of a limit function $u_*$ for $\Delta\to0$.
In Section \ref{sec:weak}, it is verified that $u_*$ is indeed a weak solution to \eqref{eq:dlss}--\eqref{eq:ic}.
The formal conclusion of the proofs for Theorems \ref{thm:pre} and \ref{thm:main} is contained in the short Section \ref{sec:proofs}.
Finally, Section \ref{sec:numeric} provides a consistency analysis and results from numerical simulations of \eqref{eq:dgf}.

\subsection*{Acknowledgement}
The authors are indebted to Giuseppe Savar\'{e} for fruitful discussions on the subject,
and especially for contributing the initial idea for the entropy preserving discretization scheme.


\section{Discretization in space and time}\label{sec:discretization}
%


\subsection{Inverse distribution functions}
Before defining the discrete quantities, let us recall some basic facts from the continuous context.
We denote by
\begin{align*}
  \dens = \left\{ u:\Omega\to\setR_+\,:\,\intom u(x)\dd x = M\right\}
\end{align*}
the space of densities of total mass $M$ on $\Omega$,
and we endow $\dens$ with the $L^2$-Wasserstein metric $\wass$.
We refer to \cite{VilBook} for a comprehensive introduction to the topic.
For our purposes here, it suffices to know that convergence with respect to $\wass$ 
is equivalent to weak-$\star$ convergence in $L^1(\Omega)$,
and that the $L^2$-Wasserstein distance on $\dens$ is isometrically equivalent 
to the usual $L^2$-distance on the space
\begin{align*}
  \xspc = \left\{ \theX:[0,M]\to\Omega\,:\,\text{$\theX$ continuous and strictly increasing, with $\theX(0)=a$, $\theX(M)=b$}\right\}
\end{align*}
of inverse distribution functions $\theX$.
The isometry is given as follows.
\begin{lem}
  \label{lem:flat}
  Given $u^0,u^1\in\dens$, introduce their Lagrangian maps $\theX^0,\theX^1\in\xspc$ such that
  \begin{align*}
    \xi = \int_0^{\theX^j(\xi)}u^j(x)\dd x \quad \text{for all $\xi\in[0,M]$}.
  \end{align*}
  Then
  \begin{align*}
    \wass(u^0,u^1) = \|\theX^0-\theX^1\|_{L^2([0,M])}.
  \end{align*}
\end{lem}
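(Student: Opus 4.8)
The plan is to exhibit explicitly the transport plan between $u^0$ and $u^1$ that is induced by the pair $(\theX^0,\theX^1)$, and then to verify that this plan is optimal, so that $\wass(u^0,u^1)$ is indeed computed by it.

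First I would recall the construction of the Lagrangian maps together with the change of variables underlying it. For $j\in\{0,1\}$, let $F^j(x)=\int_a^x u^j(y)\dd y$ be the cumulative distribution function of $u^j$; since every element of $\dens$ is positive and integrable, $F^j$ maps $\Omega$ continuously and strictly increasingly onto $[0,M]$, with $F^j(a)=0$ and $F^j(b)=M$, and $\theX^j\in\xspc$ is by definition its inverse. The single identity that will be used repeatedly is that $\theX^j$ transports the Lebesgue measure restricted to $[0,M]$ forward onto the measure $u^j\dd x$: substituting $\xi=F^j(x)$, i.e.\ $\dd\xi=u^j(x)\dd x$, gives $\int_0^M\varphi\big(\theX^j(\xi)\big)\dd\xi=\intom\varphi(x)u^j(x)\dd x$ for every $\varphi\in C(\Omega)$.

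Applying this identity in each coordinate, the image $\gamma_\star$ of the Lebesgue measure on $[0,M]$ under the map $\xi\mapsto\big(\theX^0(\xi),\theX^1(\xi)\big)$ is an admissible coupling of $u^0\dd x$ and $u^1\dd x$, whose quadratic transport cost equals $\int_0^M|\theX^0(\xi)-\theX^1(\xi)|^2\dd\xi=\|\theX^0-\theX^1\|_{L^2([0,M])}^2$; this already gives ``$\le$'' in the asserted identity. For the reverse inequality I would invoke the classical one-dimensional optimality of the monotone coupling. Expanding the cost of an arbitrary coupling $\gamma$ of $u^0\dd x$, $u^1\dd x$ as $\iint|x-y|^2\dd\gamma=\intom x^2u^0\dd x+\intom y^2u^1\dd x-2\iint xy\dd\gamma$ reduces the claim to showing that $\gamma_\star$ maximizes $\iint xy\dd\gamma$ over all such $\gamma$. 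After translating $\Omega$ into $[0,\infty)$ --- which alters $\iint xy\dd\gamma$ only by a constant not depending on $\gamma$ --- the identity $xy=\int_0^\infty\int_0^\infty\indy[s<x]\,\indy[t<y]\dd s\dd t$ and Fubini give $\iint xy\dd\gamma=\int_0^\infty\int_0^\infty\gamma\big(\{x>s\}\times\{y>t\}\big)\dd s\dd t$. Now the Fr\'echet--Hoeffding bound $\gamma\big(\{x>s\}\times\{y>t\}\big)\le\min\big(M-F^0(s),\,M-F^1(t)\big)$ holds for every coupling, while for $\gamma_\star$ one computes $\gamma_\star\big(\{x>s\}\times\{y>t\}\big)=M-\max\big(F^0(s),F^1(t)\big)$, i.e.\ the bound is attained; integrating in $(s,t)$ yields $\iint xy\dd\gamma\le\iint xy\dd\gamma_\star$ and hence ``$\ge$''. (One could just as well quote the explicit formula for $\wass$ in terms of inverse distribution functions on the line, see \cite{VilBook}.)

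I expect no genuine obstacle here: the statement is a standard fact of one-dimensional optimal transport, and the only work is to make the push-forward and the optimality explicit. The one point meriting a word of care is the regularity of the pseudo-inverse when a density is allowed to vanish --- then $F^j$ has flat pieces, $\theX^j$ acquires jumps and leaves $\xspc$ in the strict sense --- but the change of variables identity and both Fr\'echet--Hoeffding estimates persist for the generalized left-continuous inverse, so the isometry extends to the $\wass$-closure of the positive densities; for densities in $\dens$ itself this does not occur.
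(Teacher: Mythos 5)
Your argument is correct. For what it is worth, the paper does not actually prove Lemma~\ref{lem:flat}: it is stated as a classical fact of one-dimensional optimal transport, with the reader referred to \cite{VilBook}, so there is no internal proof against which to compare.

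What you supply is precisely the standard textbook proof: realize the monotone rearrangement as the push-forward coupling $\gamma_\star=(\theX^0,\theX^1)_\#\mathcal{L}|_{[0,M]}$ to obtain $\wass(u^0,u^1)\le\|\theX^0-\theX^1\|_{L^2([0,M])}$, reduce the cost comparison to maximizing $\iint xy\,\dd\gamma$ by subtracting the marginal-determined parts, and then establish optimality of $\gamma_\star$ by the Fr\'echet--Hoeffding upper bound on $\gamma(\{x>s\}\times\{y>t\})$, which $\gamma_\star$ saturates. All steps check out: the change-of-variables identity $\int_0^M\varphi(\theX^j)\,\dd\xi=\intom\varphi\,u^j\,\dd x$ is exactly \eqref{eq:pushforward}; the translation to $[0,\infty)$ adds only a coupling-independent constant to $\iint xy\,\dd\gamma$, so the layer-cake identity applies; and $\gamma_\star(\{x>s\}\times\{y>t\})=M-\max(F^0(s),F^1(t))=\min(M-F^0(s),M-F^1(t))$ follows from $\theX^j(\xi)>s\Leftrightarrow\xi>F^j(s)$ because $F^j$ is continuous and strictly increasing on $\Omega$ for $u^j\in\dens$. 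Your closing caveat about densities with vanishing sets and the passage to generalized (left-continuous) inverses is correct but, as you note yourself, not needed for elements of $\dens$ as defined in the paper, and it does not affect the validity of the argument.
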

Above, the name \emph{Lagrangian map} is underlined by the following change of variables formula,
\begin{align}
  \label{eq:pushforward}
  \intom \varphi(x) u(x) \dd x = \int_0^M \varphi\big(\theX(\xi)\big) \dd\xi,
\end{align}
that holds for every bounded and continuous test function $\varphi\in C^0([a,b])$.

\subsection{Ansatz space}\label{sec:ansatz}
Fix a discretization parameter $K\in\setN$, which is the number of degress of freedom plus one.
We will need both the integers and the half-integers between $0$ and $K$,
that is
\begin{align*}
  \ivalp = \{1,2,\ldots,K-1\},\quad
  \ival = \ivalp\cup\{0,K\},\quad\text{and}\quad
  \hval = \Big\{\frac12,\frac32,\ldots,K-\frac12\Big\}.
\end{align*}
For discretization of $[0,M]$,
introduce the equidistant mass grid $(\xi_0,\ldots,\xi_K)$ with 
\begin{align*}
  \xi_k=k\delta\quad\text{for}\quad\delta:=M/K.  
\end{align*}
For discretization of $\Omega=[a,b]$, 
we consider (non-equidistant) grids from
\begin{align*}
  \xseqN = \big\{ \xvec = (x_1,\ldots,x_{K-1}) \,\big|\, a < x_1 < \ldots < x_{K-1} < b \big\} \subseteq (a,b)^{K-1}.
\end{align*}
By definition, $\xvec\in\xseqN$ is a vector with $K-1$ components,
but we shall frequently use the convention that $x_0=a$ and $x_K=b$.
In the convex set $\xspc$ of inverse distribution functions,
we single out the $(K-1)$-dimensional open and convex subset
\begin{align*}
  \xspc_\theh = \big\{\theX\in\xspc\,\big|\, \text{$\theX$ is affine on each $[\xi_{k-1},\xi_k]$}\big\}.
\end{align*}
Functions $\theX\in\xspc_\theh$ are called \emph{Lagrangian maps},
since they map the (fixed reference) mesh $(\xi_0,\xi_1,\ldots,\xi_K)$ to a (variable) mesh $\xvec\in\xseqN$. 
There is a one-to-one correspondence between grid vectors $\xvec\in\xseqN$
and inverse distribution function $\theX\in\xspc_\theh$,
explicitly given by
\begin{align}
  \label{eq:cX}
  \theX = \cX_\theh[\xvec] = \sum_{k\in\ival} x_k \hatf_k ,
\end{align}
where the $\hatf_k:[0,M]\to\setR$ are the usual affine hat functions, with $\hatf_k(\xi_\ell)=\delta_{k,\ell}$.
Further, the density function $\cf_\theh[\xvec]\in\dens$ associated to $\cX_\theh[\xvec]$ is
\begin{align}
  \label{eq:cf}
  \cf_\theh[\xvec](x)= \sum_{\kappa\in\hval} z_\kappa\indy_{(x_\kappm,x_\kappp]}(x) ,
\end{align}
where the vector 
\begin{align}
  \label{eq:zk}
  \zvec =\cz_\theh[\xvec]=(z_{1/2},\ldots,z_{K-1/2}) \quad\text{of weights}\quad z_\kappa = \frac{\delta}{x_\kappp-x_\kappm}
\end{align}
is such that each interval $(x_\kappm,x_\kappp]$ contains the same amount $\delta$ of total mass.
The following convention reflects the no-flux boundary conditions:
\begin{align}
  \label{eq:zconvention}
  z_{-\frac12} = z_{\frac12}, \quad z_{K+\frac12}=z_{K-\frac12}.
\end{align}
We finally introduce the associated $(K-1)$-dimensional submanifold $\densN := \cf_\theh[\xseqN]\subset\dens$
as the image of the injective map $\cf_\theh:\xseqN\to\densN$.

%
\subsection{A metric on the ansatz space}
Below, we define a ``Wasserstein-like'' metric $\wassN$ on the ansatz space $\densN$.
For motivation of that definition, observe that $\densN$ is a geodesic submanifold of $\dens$, 
hence the restriction $\widetilde\wassN$ of the genuine $L^2$-Wasserstein distance $\wass$ to $\densN$ 
appears as a natural candidate for $\wassN$.
Thanks to the flatness of $\wass$ in one space dimension, see Lemma \ref{lem:flat},
the pull-back metric of $\wass$ on $\xseqN$ induced by $\cf_\theh$ is a homogeneous quadratic form.
More precisely,
\begin{align}
  \label{eq:metricondensN}
  \wass\big(\cf_\theh[\xvec^0],\cf_\theh[\xvec^1]\big)^2 = \sum_{k=1}^{K-1}(\xvec^1_k-\xvec^0_k)\widetilde{\Wmat}_{k\ell}(\xvec^1_\ell-\xvec^0_\ell)
  \quad \text{for all $\xvec^0,\xvec^1\in\xseqN$},
\end{align}
where the positive matrix $\widetilde{\Wmat}\in\setR^{(K-1)\times(K-1)}$ is tridiagonal.
This approach has been followed in our previous work \cite{dde}.

Here, we take a modified approach and use \eqref{eq:metricondensN} to \emph{define} a metric $\wassN$ on $\densN$,
but with the simpler matrix $\delta\eins_{K-1}$ in place of $\widetilde{\Wmat}$ above.
In other words: up to a factor $\delta^{1/2}$, the pull-back metric of $\wassN$ via $\cf_\theh$ is the usual Euclidean distance on $\xseqN$.
\begin{rmk}
  Our proof of convergence heavily relies on several explicit estimates of quantities with respect to the metric $\wassN$.
\end{rmk}
With the rescaled scalar product $\spr{\cdot}{\cdot}$ and norm $\nrm{\cdot}$ defined for $\vvec,\wvec\in\setR^{K-1}$ by
\begin{align*}
  \spr{\vvec}{\wvec} = \delta\sum_{k=1}^{K-1}v_kw_k, \qquad \nrm{\vvec} = \left(\delta\sum_{k=1}^{K-1}v_k^2\right)^{1/2},
\end{align*}
the distance $\wassN$ is conveniently written as
\begin{align*}
  \wassN(\cf_\theh[\xvec^0],\cf_\theh[\xvec^1]) 
  = \|\xvec^1-\xvec^0\|_\delta.
\end{align*}
In \cite[Lemma 3.2]{dde}, we have shown the following.
\begin{lem}
  \label{lem:metricequivalent}
  $\wassN$ is equivalent to the Wasserstein metric restricted to $\densN$, uniformly in $K$:
  \begin{align}
    \label{eq:metricequivalent}
    \frac16\wassN(u_0,u_1)^2\le\wass(u_0,u_1)^2 \le\wassN(u_0,u_1)^2 \quad \text{for all $u_0,u_1\in\densN$}.
  \end{align}
\end{lem}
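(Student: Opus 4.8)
The plan is to establish the two-sided bound in \eqref{eq:metricequivalent} by working entirely on the level of inverse distribution functions, where both metrics become explicit. By construction, $\wassN(u_0,u_1)=\nrm{\xvec^1-\xvec^0}$ with $u_j=\cf_\theh[\xvec^j]$, while by Lemma \ref{lem:flat} we have $\wass(u_0,u_1)=\|\cX_\theh[\xvec^1]-\cX_\theh[\xvec^0]\|_{L^2([0,M])}$, since $\cX_\theh[\xvec^j]$ is precisely the Lagrangian map of $\cf_\theh[\xvec^j]$. Writing $Y_k:=x^1_k-x^0_k$ (with the convention $Y_0=Y_K=0$, since the endpoints $a,b$ are fixed), the function $\cX_\theh[\xvec^1]-\cX_\theh[\xvec^0]=\sum_{k\in\ival}Y_k\hatf_k$ is the piecewise affine interpolant of the values $Y_k$ on the uniform grid. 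Hence the whole claim reduces to the elementary linear-algebra statement
\begin{align*}
  \frac16\,\delta\sum_{k=1}^{K-1}Y_k^2 \;\le\; \Big\|\sum_{k\in\ival}Y_k\hatf_k\Big\|_{L^2([0,M])}^2 \;\le\; \delta\sum_{k=1}^{K-1}Y_k^2 ,
\end{align*}
valid for all real vectors $(Y_1,\ldots,Y_{K-1})$.

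The key computational step is to evaluate the $L^2$-norm of the interpolant exactly. On each subinterval $[\xi_{k-1},\xi_k]$ of length $\delta$, the interpolant is affine interpolating $Y_{k-1}$ and $Y_k$, so $\int_{\xi_{k-1}}^{\xi_k}(\cdots)^2 = \frac{\delta}{3}(Y_{k-1}^2+Y_{k-1}Y_k+Y_k^2)$. Summing over $k=1,\ldots,K$ and using $Y_0=Y_K=0$ gives a tridiagonal quadratic form
\begin{align*}
  \Big\|\sum_{k\in\ival}Y_k\hatf_k\Big\|_{L^2}^2 = \frac{\delta}{6}\sum_{k=1}^{K-1}\big(4Y_k^2 + 2Y_kY_{k+1}\big) + \text{(boundary terms)} = \delta\, Y^{\mathsf T}\Big(\tfrac16 A\Big) Y,
\end{align*}
where $A$ is the standard $(K-1)\times(K-1)$ tridiagonal mass matrix with $2$ on the diagonal and $1$ on the off-diagonals (i.e., $A = \eins + S$ with $S$ the discrete averaging/shift operator with spectrum in $[0,2]$... more precisely the eigenvalues of $A$ are $2+2\cos\big(\tfrac{j\pi}{K}\big)$, $j=1,\ldots,K-1$, lying in $(0,4)$). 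Thus the ratio $\wass^2/\wassN^2$ equals $\tfrac16 Y^{\mathsf T} A Y / (Y^{\mathsf T}Y)$, whose Rayleigh quotient lies strictly between $0$ and $\tfrac16\cdot 4 = \tfrac23$. This already yields the upper bound $\wass^2 \le \tfrac23\wassN^2 \le \wassN^2$; for the lower bound one needs $\tfrac16\lambda_{\min}(A)\ge\tfrac16$, i.e. $\lambda_{\min}(A)\ge1$. Since $\lambda_{\min}(A)=2+2\cos(\tfrac{(K-1)\pi}{K}) = 2-2\cos(\tfrac{\pi}{K}) \to 0$ as $K\to\infty$, the \emph{sharp} constant degenerates — so one must not optimize but instead use a crude lower bound that is uniform in $K$: namely $\tfrac16 A \ge \tfrac16\eins$ fails, so we instead estimate termwise via $4Y_k^2+2Y_kY_{k+1}\ge 4Y_k^2 - (Y_k^2+Y_{k+1}^2)$, giving $Y^{\mathsf T}AY \ge \sum(3Y_k^2 - Y_{k+1}^2) \ge 2\sum Y_k^2$ after reindexing and using $Y_K=0$; hence $\wass^2\ge\tfrac16\cdot\delta\cdot 2\sum Y_k^2$... which is too strong. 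The correct crude bound is $Y^\mathsf{T}AY\ge \sum Y_k^2$: write $Y^\mathsf{T}AY = \sum_{k=0}^{K-1}(Y_{k+1}-Y_k)^2 + \sum_{k=1}^{K-1}Y_k^2 \wait$ — actually $\sum_{k=1}^{K}(Y_k-Y_{k-1})^2 = 2\sum Y_k^2 - 2\sum Y_kY_{k-1} = Y^\mathsf{T}(2\eins - 2\,\text{off})Y$, whereas $A$ has $+1$ off-diagonals, so $Y^\mathsf{T}AY = 4\sum Y_k^2 - \sum_{k=1}^K(Y_k-Y_{k-1})^2 \le 4\sum Y_k^2$, recovering the upper bound, and the lower bound comes from $Y^\mathsf{T}AY \ge \sum Y_k^2$ since the Gershgorin/diagonal-dominance argument gives $A - \eins$ diagonally dominant with nonnegative diagonal, hence positive semidefinite. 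This gives exactly the factor $\tfrac16$.

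I would therefore organize the proof as: (i) reduce to inverse distribution functions via Lemma \ref{lem:flat} and the identity $\wassN(u_0,u_1)=\nrm{\xvec^1-\xvec^0}$; (ii) compute $\|\sum Y_k\hatf_k\|_{L^2}^2 = \delta\, Y^{\mathsf T}(\tfrac16 A)Y$ by elementary integration on each cell; (iii) bound the tridiagonal form $A$ by $\eins \le A \le 4\eins$ using diagonal dominance for the lower bound and $A = 4\eins - L$ with $L\ge 0$ the discrete Laplacian for the upper bound. Since this is just \cite[Lemma 3.2]{dde}, one may alternatively simply cite it, but the self-contained argument is short. The only subtlety — and the one place to be careful — is that the sharp spectral constant is \emph{not} uniform in $K$, so the lower bound must be obtained from a $K$-independent estimate (diagonal dominance), not from $\lambda_{\min}(A)$; stating it that way is essential to the phrase ``uniformly in $K$'' in the lemma.
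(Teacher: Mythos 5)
Your reduction to inverse distribution functions via Lemma~\ref{lem:flat} and the exact evaluation of $\|\sum_k Y_k\hatf_k\|_{L^2([0,M])}^2$ on each cell are the right moves --- this is exactly how the lemma becomes a finite-dimensional statement about a tridiagonal quadratic form --- and your displayed formula $\frac{\delta}{6}\sum_{k=1}^{K-1}(4Y_k^2+2Y_kY_{k+1})$ with $Y_0=Y_K=0$ is correct. However, the matrix analysis that follows contradicts your own display. If $\wass^2=\delta\,Y^{\mathsf T}(\tfrac16 A)Y$, then comparing with that formula forces $A$ to have $4$ on the diagonal and $1$ on the off-diagonals (the scaled FEM mass matrix), \emph{not} $2$ as you assert. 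Consequently the eigenvalues of $A$ are $4+2\cos(j\pi/K)\in(2,6)$, not $2+2\cos(j\pi/K)$; in particular $\lambda_{\min}(A)=4-2\cos(\pi/K)\to 2>0$, and your claim that ``the sharp constant degenerates'' is simply false. The optimal lower factor is $\tfrac16(4-2\cos(\pi/K))>\tfrac13$ for every $K$, which beats the lemma's $\tfrac16$; there is no obstruction to a spectral argument here. Your termwise bound $Y^{\mathsf T}AY\ge 2\sum Y_k^2$ is in fact correct and yields $\wass^2\ge\tfrac13\wassN^2\ge\tfrac16\wassN^2$ --- a stronger lower bound is not ``too strong'' and should not be discarded. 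Note also that the diagonal-dominance step you finally rely on ($A-\eins_{K-1}$ diagonally dominant with non-negative diagonal) only holds with the correct diagonal $4$: with diagonal $2$, the matrix $A-\eins_{K-1}$ has diagonal entries $1$, which is \emph{less} than the off-diagonal row sum $2$, and the argument fails. Likewise $A=6\eins_{K-1}-L$ with $L$ the discrete Dirichlet Laplacian, not $4\eins_{K-1}-L$. Finally, the stray ``$\wait$'' placeholder must be removed.

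In short, the strategy is sound and is the natural way to unfold the citation --- the paper itself only points to \cite[Lemma 3.2]{dde} without reproducing the argument --- but the write-up contains a persistent $2$-versus-$4$ confusion that propagates through the eigenvalue formula, the (false) ``degeneracy'' claim, and the identification $A=4\eins_{K-1}-L$. Once $A$ is corrected, the whole lemma follows in one line from $\eins_{K-1}\le 2\eins_{K-1}\le A\le 6\eins_{K-1}$, the left inequality by diagonal dominance (or by the identity $Y^{\mathsf T}AY=6\sum Y_k^2-\sum_{k=1}^K(Y_k-Y_{k-1})^2$ together with $\sum(Y_k-Y_{k-1})^2\le 4\sum Y_k^2$), the right inequality by dropping the non-negative term $\sum(Y_k-Y_{k-1})^2$.
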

%
%
Note that, as a direct consequence of \eqref{eq:metricequivalent}, 
we obtain that
\begin{align*}
  \left\|\cX_\theh[\xvec^0]-\cX_\theh[\xvec^1]\right\|_{L^2([0,M])}
  \le\left\|\xvec^0-\xvec^1\right\|_\delta.
\end{align*}
We shall not elaborate further on the point in which sense the thereby defined metric $\wassN$ 
is a good approximation of the $L^2$-Wasserstein distance on $\densN$.
However, Theorem \ref{thm:main} validates our choice a posteriori.
For results concerning the $\Gamma$-convergence of discretized transport metrics to the Wasserstein distance
see \cite{GigM}.

%
\subsection{Functions on $\densN$}
When discussing functions on $\densN$ in the following, 
we always assume that these are given in the form $f:\xseqN\to\setR$.
We denote the first and second derivatives of $f$ 
by $\grad f:\xseqN\to\setR^{K-1}$ and by $\grad^2 f:\xseqN\to\setR^{(K-1)\times(K-1)}$, respectively,
with components
\begin{align}
  \label{eq:grad}
  [\grad f(\xvec)]_k = \partial_{x_k} f(\xvec) 
  \quad\text{and}\quad 
  [\grad^2 f(\xvec)]_{k,l} = \partial_{x_k}\partial_{x_l} f(\xvec).
\end{align}
\begin{xmp}
  Each component $z_\kappa$ of $\zvec=\cz_\theh[\xvec]$ is a function on $\xseqN$, 
  and
  \begin{align}
    \label{eq:zrule}
    \grad z_\kappa = -z_\kappa^2\,\frac{\ee_{\kappa+\frac12}-\ee_{\kappa-\frac12}}{\delta},
  \end{align}
  where $\ee_k\in\setR^{K-1}$ is the $k$th canonical unit vector, with the convention $\ee_0=\ee_K=0$.  
\end{xmp}
We introduce further the gradient
\begin{align*}
  \wgrad f(\xvec) = \delta^{-1}\grad f(\xvec),
\end{align*}
where the scaling by $\delta^{-1}$ is chosen such that, for arbitrary vectors $\vvec\in\setR^{K-1}$,
\begin{align*}
  \spr{\vvec}{\wgrad f(\xvec)} = \sum_{k=1}^{K-1}v_k\partial_{x_k}f(\xvec).
\end{align*}
The \emph{gradient flow} of a function $f$ on $\densN$ with respect to $\wassN$ is then defined
as the solution $\xvec:[0;\infty)\to\xseqN$ for the system of ordinary differential equations
\begin{align}
  \label{eq:gflow}
  \dot{\xvec} = - \wgrad f(\xvec), 
  \qquad\text{or, more explicitly},\qquad
  \dot x_k = -\delta^{-1}\partial_{x_k}f(\xvec), \quad\text{for each $k\in\ivalp$}.
\end{align}

\subsubsection{The discretized Boltzmann entropy}\label{sec:Boltzmann}
The Boltzmann entropy $\HF$ as defined in \eqref{eq:intro.entropy} is a non-negative functional on $\dens$,
which vanishes precisely on the constant function $u\equiv M/(b-a)$.
In analogy to \cite{dde}, we introduce a discretization $\HFz:\xseqN\to\setR$ of the Boltzmann entropy $\HF$
by restriction to $\densN$:
\begin{align*}
  \HFz(\xvec) := \HF(\cf_\theh[\xvec]) 
  = \int_\Omega \cf_\theh[\xvec] \ln \cf_\theh[\xvec] \dd x - \HF_0
  = \delta\sum_{\kappa\in\hval}\ln z_\kappa - \HF_0,
\end{align*}
where $\HF_0$ was defined in \eqref{eq:intro.entropy}, and $\zvec=\cz_\theh[\xvec]$.
Naturally, $\HFz$ inherits non-negativity, and vanishes only for $\xvec$ with $x_k=a+(b-a)k/K$.
For the derivatives, we obtain --- using the rule \eqref{eq:zrule} ---
\begin{align}
  \label{eq:Hgrad}
  \grad\HFz(\xvec) 
  &= -\delta\sum_{\kappa\in\hval}z_\kappa \frac{\ee_{\kappm}-\ee_{\kappp}}\delta
  = \delta\sum_{k\in\ivalp}\frac{z_\kph-z_\kmh}\delta\ee_k, \\
  \label{eq:Hhess}
  \grad^2\HFz(\xvec) 
  &=\delta\sum_{\kappa\in\hval}z_\kappa^2 \left(\frac{\ee_{\kappm}-\ee_{\kappp}}\delta\right) \left(\frac{\ee_{\kappm}-\ee_{\kappp}}\delta\right)^T.
\end{align}
It is obvious that $\grad^2\HFz$ is positive semi-definite, i.e., that $\HFz$ is convex.

\subsubsection{The discretized Fisher information}\label{sec:FisherInf}
%
The discrete Fisher information $\Fz:\xseqN\to\setR$ is \emph{not} defined by restriction of $\F$ from \eqref{eq:intro.fisher}. 
Instead, we mimick \eqref{eq:magic} and define accordingly
\begin{align*}
  \Fz(\xvec) = \frac12\nrm{\wgrad\HFz(\xvec)}^2
  = \frac\delta2\sum_{k\in\ivalp}\Big(\frac{z_\kph-z_\kmh}\delta\Big)^2,
\end{align*}
using \eqref{eq:Hgrad}.
Thanks to this simple structure, the gradient flow equation for $\Fz$ has an explicit and compact representation.
Using the rule \eqref{eq:zrule}, the representation \eqref{eq:Hhess} and the convention \eqref{eq:zconvention},
we obtain with $\zvec=\cz_\theh[\xvec]$:
\begin{align}
  \nonumber
  \wgrad\Fz(\xvec) &= \delta^{-2}\grad^2\HFz(\xvec)\grad\HFz(\xvec)  \\
  &= \sum_{\kappa\in\hval,\,k\in\ivalp}
  z_\kappa^2 \left(\frac{z_\kph-z_\kmh}\delta\right)\left(\frac{\ee_\kappp-\ee_\kappm}\delta\right)\left(\frac{\ee_\kappp-\ee_\kappm}\delta\right)^T\ee_k\\
  \label{eq:Fgrad}
  &= \sum_{\kappa\in\hval} z_\kappa^2\left(\frac{z_{\kappa+1}-2z_\kappa+z_{\kappa-1}}{\delta^2}\right) \left(\frac{\ee_\kappp-\ee_\kappm}\delta\right).
\end{align}
This should be understood as a discretization of the differential operator $(Z^2Z_{\xi\xi})_\xi$ appearing
on the right hand side of \eqref{eq:zeq}.
\begin{rmk}
  Without calculating the second derivative $\grad^2\Fz$ explicitly,
  we remark that it is unbounded from below on $\xspc_\theh$,
  hence $\Fz$ is \emph{not} $\lambda$-convex for any $\lambda\in\setR$.
  This is in agreement with the fact that already the original Fisher information $\F$ 
  is \emph{not} geodesically $\lambda$-convex in the Wasserstein metric, see \cite{CarS}.  
\end{rmk}

%
\subsection{Time stepping}\label{sec:MM}
For the definition of the fully discrete scheme for solution of \eqref{eq:gradflow},
we discretize the spatially discrete gradient flow equation
\begin{align}
  \label{eq:sdgradflow}
  \dot{\xvec} = -\wgrad\Fz(\xvec)
\end{align}
also in time, using \emph{minimizing movements}.
To this end, fix a time step with $\tau>0$; 
we combine the spatial and temporal mesh widths in a single discretization parameter \[\Delta=(\tau;\theh).\]
For each $\yvec\in\xseqN$, introduce the \emph{Yosida-regularized Fisher information} $\Fy(\cdot;\yvec):\xseqN\to\setR$
by
\begin{align*}
  \Fy(\xvec;\yvec) = \frac1{2\tau}\nrm{\xvec-\yvec}^2+\Fz(\xvec).
\end{align*}
A fully discrete approximation $(\xvec_\Delta^n)_{n=0}^\infty$ of \eqref{eq:sdgradflow} is now defined inductively
from a given initial datum $\xvec_\Delta^0$ by choosing each $\xvec_\Delta^n$ 
as a global minimizer of $\Fy(\cdot;\xvec_\Delta^{n-1})$.
Below, we prove that such a minimizer always exists, see Lemma \ref{lem:cfl}.

In practice, one wishes to define $\xvec_\Delta^n$ as --- preferably unique --- solution 
of the Euler-Lagrange equations associated to $\Fy(\cdot;\xvec_\Delta^{n-1})$,
which leads to the implicit Euler time stepping:
\begin{align}
  \label{eq:euler}
  \frac{\xvec-\xvec_\Delta^{n-1}}{\tau} = -\wgrad\Fz(\xvec).
\end{align}
Using the explicit representation \eqref{eq:Fgrad} of $\grad\Fz$, 
it is immediately seen that \eqref{eq:euler} is indeed the same as \eqref{eq:dgf}.
Equivalence of \eqref{eq:euler} and the minimization problem is guaranteed at least for sufficiently small $\tau>0$.
\begin{prp}
  \label{prp:wellposed}
  For each discretization $\Delta$ and every initial condition $\xvec^0\in\xseqN$,
  the sequence of equations \eqref{eq:euler} can be solved inductively.
  Moreover, if $\tau>0$ is sufficiently small with respect to $\delta$ and $\Fz(\xvec^0)$,
  then each equation \eqref{eq:euler} possesses a unique solution with $\Fz(\xvec)\le\Fz(\xvec^0)$,
  and that solution is the unique global minimizer of $\Fy(\cdot;\xvec_\Delta^{n-1})$.
\end{prp}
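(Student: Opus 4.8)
The plan is to treat the two assertions of Proposition \ref{prp:wellposed} separately: first, unconditional solvability of the implicit Euler step \eqref{eq:euler} for \emph{every} $\tau>0$; second, the sharper statement that for $\tau$ small (relative to $\delta$ and $\Fz(\xvec^0)$) the solution is unique, decreases $\Fz$, and coincides with the global minimizer of $\Fy(\cdot;\xvec_\Delta^{n-1})$. For the first part I would run the minimizing–movement argument: fix $\yvec:=\xvec_\Delta^{n-1}\in\xseqN$ and show $\Fy(\cdot;\yvec)$ attains a global minimum on $\xseqN$. The functional is continuous on the open set $\xseqN$, and the quadratic Yosida term $\frac1{2\tau}\nrm{\xvec-\yvec}^2$ is coercive, so a minimizing sequence is bounded in $(a,b)^{K-1}$; the only issue is that a limit point might lie on $\partial\xseqN$, i.e. some $x_k=x_{k-1}$ or $x_k\in\{a,b\}$. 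But then some $z_\kappa\to\infty$, and I claim $\Fz\to\infty$ along such a sequence — this is the one genuinely non-trivial estimate and the main obstacle (see below). Granting it, the minimizer $\xvec_\Delta^n$ lies in the interior $\xseqN$, hence satisfies the first-order condition $\wgrad\Fy(\xvec_\Delta^n;\yvec)=0$, which is exactly \eqref{eq:euler}; inductively this produces a full sequence. Along the way the minimality gives the a priori bound $\Fz(\xvec_\Delta^n)\le\Fy(\xvec_\Delta^n;\yvec)\le\Fy(\yvec;\yvec)=\Fz(\yvec)$, so $\Fz(\xvec_\Delta^n)\le\Fz(\xvec^0)$ for all $n$ — this holds for \emph{any} $\tau$ and will feed the uniqueness argument.

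For the coercivity/blow-up claim I would argue as follows. On a minimizing sequence the Yosida term keeps $\nrm{\xvec-\yvec}^2$ bounded, hence $\nrm{\xvec}\le C$, so in particular $\sum_\kappa 1/z_\kappa=\sum_k(x_k-x_{k-1})/\delta=(b-a)/\delta$ is fixed; thus at most finitely many $z_\kappa$ can be large, but if $z_{\kappa_0}\to\infty$ for some $\kappa_0$ then, since the increments $z_{\kappa+1}-z_\kappa$ telescope and the $z_\kappa$ cannot \emph{all} stay bounded while one blows up, some discrete gradient $(z_\kph-z_\kmh)/\delta$ must blow up too, forcing $\Fz=\frac\delta2\sum_k((z_\kph-z_\kmh)/\delta)^2\to\infty$. (More carefully: if $\max_\kappa z_\kappa\to\infty$ while $\sum_\kappa z_\kappa^{-1}$ is bounded below away from $0$, there is a $\kappa$ with $z_\kappa$ large and an adjacent $z_{\kappa\pm1}$ comparatively small, so the squared difference is at least of order $(\max z_\kappa)^2/2$, times $\delta/2$.) This is essentially the argument of \cite[Lemma 3.2]{dde} adapted to the present $\Fz$, and I would just cite the analogous lemma there if the estimate is identical.

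For the second, $\tau$-small part, the goal is to show $\Fy(\cdot;\yvec)$ is strictly convex on the sublevel region $\{\Fz\le\Fz(\xvec^0)\}$, which then gives uniqueness of the critical point and identifies it as \emph{the} global minimizer. The Hessian is $\grad^2\Fy(\xvec;\yvec)=\frac1\tau\,\delta\eins_{K-1}+\grad^2\Fz(\xvec)$; by the Remark preceding Section \ref{sec:MM}, $\grad^2\Fz$ is not bounded below globally, but on the compact set $\{\xvec\in\xseqN : \Fz(\xvec)\le\Fz(\xvec^0)\}$ — which is indeed compact, precisely because $\Fz\to\infty$ at $\partial\xseqN$ by the blow-up estimate above — it is bounded below by some $-\Lambda$ with $\Lambda=\Lambda(\delta,\Fz(\xvec^0))\ge0$. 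Hence for $\tau<\delta/\Lambda$ the Hessian is positive definite there, so $\Fy(\cdot;\yvec)$ is strictly convex on that sublevel set; since any solution of \eqref{eq:euler} satisfies $\Fz(\xvec)\le\Fz(\yvec)\le\Fz(\xvec^0)$ and any global minimizer does too, both live in the strictly convex region and therefore coincide and are unique. An induction on $n$ — using $\Fz(\xvec_\Delta^{n-1})\le\Fz(\xvec^0)$ at each step, which we already established for arbitrary $\tau$ — closes the argument with a single smallness condition $\tau<\delta/\Lambda(\delta,\Fz(\xvec^0))$ uniform in $n$. I expect the only real work to be the blow-up estimate for $\Fz$ near $\partial\xseqN$ (needed both for existence of the minimizer and for compactness of the sublevel set); everything else is convexity bookkeeping.
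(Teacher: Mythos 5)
Your plan matches the paper's proof, which factors the argument through Lemma~\ref{lem:cfl}: existence for every $\tau$ via compactness of the $\Fz$-sublevel sets and the direct method, then uniqueness for small $\tau$ by adding $\tfrac{\delta}{\tau}\eins_{K-1}$ to the Hessian of $\Fz$ on a compact sublevel set. Two small imprecisions in your write-up are worth noting. First, the coercivity step ($\Fz\to\infty$ as $\xvec\to\partial\xseqN$) is handled in the paper by the clean explicit bound \eqref{eq:Festimate} from the appendix, $z_\kappa\le(2M\Fz)^{1/2}+M/(b-a)$, whose proof is exactly the telescoping-plus-Cauchy--Schwarz estimate \eqref{eq:lohi}; your parenthetical ``more careful'' version is not quite right as stated --- the drop from a large $z_{\kappa^*}$ to a bounded $z_{\kappa_*}$ (which must exist since $\sum_\kappa\delta/z_\kappa=b-a$) may be spread across all $K-1$ indices, so the largest adjacent difference is only of order $\max_\kappa z_\kappa/(K-1)$; since $K$ is fixed this still gives $\Fz\to\infty$, but the constant is not $(\max z_\kappa)^2/2$. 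Relatedly, your citation of \cite[Lemma 3.2]{dde} is misplaced: in this paper that reference is used only for the metric equivalence \eqref{eq:metricequivalent}; the relevant lemma for compactness here is \eqref{eq:Festimate}. Second, the statement ``any solution of \eqref{eq:euler} satisfies $\Fz(\xvec)\le\Fz(\yvec)$'' is not true for arbitrary critical points of $\Fy(\cdot;\yvec)$; only the minimizer is known to satisfy it. The Proposition (and the paper's Lemma~\ref{lem:cfl}) correspondingly restricts the uniqueness claim to solutions with $\Fz(\xvec)\le\Fz(\xvec^0)$, i.e.\ to critical points already lying in the compact sublevel set --- which is what you actually use, so this is a matter of phrasing rather than substance.
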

The proof of this proposition is a consequence of the following rather technical lemma.
\begin{lem}
  \label{lem:cfl}
  Fix a spatial discretization parameter $\delta$ and a bound $C>0$.
  Then for every $\yvec\in\xseqN$ with $\Fz(\yvec)\le C$, the following are true:
  \begin{itemize}
  \item for each $\tau>0$, 
    the function $\Fy(\cdot;\yvec)$ possesses at least one global minimizer $\xvec^*\in\xseqN$;
  \item there exists a $\tau_C>0$ independent of $\yvec$ such that for each $\tau\in(0,\tau_C)$,
    the global minimizer $\xvec^*\in\xseqN$ is strict and unique, 
    and it is the only critical point of $\Fy(\cdot;\yvec)$ with $\Fz(\xvec)\le C$.
  \end{itemize}
\end{lem}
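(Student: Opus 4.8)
The plan is to prove the existence of a global minimizer by the direct method, and then to establish uniqueness and strictness for small $\tau$ by a convexity-type argument restricted to the sublevel set $\{\Fz\le C\}$. The first obstacle to the direct method is that $\xseqN$ is an \emph{open} set: a priori a minimizing sequence could approach the boundary, i.e., some consecutive nodes $x_k, x_{k+1}$ could collapse (making some $z_\kappa\to\infty$) or the whole configuration could degenerate so that some $z_\kappa\to0$. I would rule both out using $\Fz$ and the penalization term. First I would note that $\Fy(\yvec;\yvec)=\Fz(\yvec)\le C$, so any minimizer $\xvec^*$ satisfies $\Fy(\xvec^*;\yvec)\le C$, hence simultaneously $\nrm{\xvec^*-\yvec}^2\le 2\tau C$ and $\Fz(\xvec^*)\le C$. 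The Wasserstein-type bound confines $\xvec^*$ to a bounded region of $(a,b)^{K-1}$, but I still need a uniform \emph{separation} of the nodes. For this I would show that a bound $\Fz(\xvec)\le C$ together with the fixed total mass $\sum_\kappa \delta/z_\kappa = b-a$ forces both upper and lower bounds $0<c_*(C,\delta)\le z_\kappa\le C_*(C,\delta)$ on every weight: indeed $\Fz(\xvec)\le C$ controls the discrete $H^1$-seminorm of $\zvec$, so by a discrete Sobolev/Poincaré-type inequality (using that the average of $1/z_\kappa$ equals $(b-a)/M$) the $z_\kappa$ cannot be too large or too small. Since $z_\kappa = \delta/(x_\kappp-x_\kappm)$, uniform two-sided bounds on $z_\kappa$ are exactly uniform control of the gaps, so a minimizing sequence stays in a compact subset of $\xseqN$; continuity of $\Fy(\cdot;\yvec)$ then yields a minimizer. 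This is the main technical point.

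For the second part, I would work entirely inside the compact sublevel set $\mathcal{S}_C=\{\xvec\in\xseqN:\Fz(\xvec)\le C\}$, on which --- by the previous paragraph --- the weights $z_\kappa$ and all relevant derivatives of $\Fz$ are bounded, so $\grad^2\Fz$ is bounded below by some $-L_C\eins$ with $L_C=L_C(C,\delta)$. Consequently $\grad^2\Fy(\cdot;\yvec)=\tau^{-1}\delta\eins + \grad^2\Fz$ is positive definite on $\mathcal{S}_C$ as soon as $\tau<\tau_C:=\delta/L_C$, i.e.\ $\Fy(\cdot;\yvec)$ is \emph{uniformly convex} on $\mathcal{S}_C$. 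Any global minimizer lies in $\mathcal{S}_C$ (shown above), and any critical point $\xvec$ of $\Fy(\cdot;\yvec)$ with $\Fz(\xvec)\le C$ also lies there; on a convex neighbourhood a uniformly convex function has at most one critical point, which is then its strict global minimizer over that neighbourhood. A small bookkeeping step is needed because $\mathcal{S}_C$ itself need not be convex: I would instead observe that the minimizer $\xvec^*$ satisfies the stronger bound $\Fz(\xvec^*)\le\Fy(\xvec^*;\yvec)\le\Fy(\yvec;\yvec)=\Fz(\yvec)\le C$, fix a slightly larger bound $C'=C+1$, note $\tau_{C'}\le\tau_C$ works uniformly, and run the convexity argument on the convex hull of the (bounded) set $\{\xvec:\Fz(\xvec)\le C'\}$ after checking that $\Fy(\cdot;\yvec)$ restricted there still has its only critical points inside $\mathcal{S}_{C}$; alternatively, and more cleanly, I would argue directly that if $\xvec_1,\xvec_2$ were two distinct critical points in $\mathcal{S}_C$, then along the segment joining them $\Fy(\cdot;\yvec)$ is strictly convex (the segment stays in $\xseqN$ since $\xseqN$ is convex, and $\grad^2\Fz\ge -L_C$ holds at every point of the segment once we enlarge $C$ to bound $\Fz$ on all of $\xseqN\cap B_R$ where $R$ is the a priori radius), giving a contradiction with both endpoints being critical. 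The independence of $\tau_C$ from $\yvec$ is automatic since $L_C$ depends only on $C$ and $\delta$, not on $\yvec$.

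The overall structure is therefore: (i) a priori bounds $\nrm{\xvec^*-\yvec}^2\le 2\tau C$ and $\Fz(\xvec^*)\le C$ for minimizers, hence confinement to a ball $B_R\subset(a,b)^{K-1}$; (ii) the discrete-Sobolev argument turning $\Fz(\xvec)\le C$ into two-sided bounds on $z_\kappa$, hence node separation and compactness of the relevant sublevel sets; (iii) existence of a minimizer by lower semicontinuity/continuity on the compact set; (iv) on the bounded region, $\grad^2\Fz\ge -L_C\eins$, so $\Fy(\cdot;\yvec)$ is uniformly convex for $\tau<\tau_C=\delta/L_C$, forcing uniqueness and strictness of the minimizer and excluding spurious critical points with $\Fz\le C$. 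I expect step (ii) --- converting the Fisher-information bound into uniform positivity and boundedness of the weights, with constants depending only on $C$ and $\delta$ --- to be the crux; everything after it is soft functional analysis plus an eigenvalue estimate for $\grad^2\Fz$ that only requires the weights and their differences to be bounded, which (ii) supplies.
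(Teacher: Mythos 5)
Your strategy coincides with the paper's in all essentials. For existence, both confine attention to the sublevel set $A_C := \Fz^{-1}([0,C+1])$ (you write $\{\Fz\le C\}$, but a minimizing sequence a priori only satisfies $\Fz\le C+o(1)$, so the margin is needed); compactness of $A_C$ is obtained from the discrete Poincar\'e-type estimate you sketch, which is precisely \eqref{eq:Festimate} in the appendix and converts the bound on $\Fz$ into a positive lower bound on the gaps $x_\kappp-x_\kappm$. For uniqueness, both proofs bound $\grad^2\Fz\ge\lambda_C\eins$ on $A_C$ and deduce that $\grad^2\Fy=\grad^2\Fz+(\delta/\tau)\eins$ is positive definite there once $\tau<\tau_C:=\delta/(-\lambda_C)$. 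So the two proofs are the same in approach and in all the key estimates used.

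You are right to flag that $A_C$ need not be convex, so positivity of $\grad^2\Fy$ \emph{on} $A_C$ does not by itself rule out two critical points in $A_C$ (the segment joining them could leave $A_C$); the paper's proof passes over this point silently. However, the fix you label ``cleaner'' is flawed: you cannot bound $\Fz$ (hence cannot bound $\grad^2\Fz$ from below) on $\xseqN\cap B_R$, because $\Fz$ blows up as $\xvec$ approaches $\partial\xseqN$ inside any such ball, so no finite $L_C$ exists for that region. Your first suggestion --- running the convexity argument on the convex hull of the sublevel set --- is the one that works, but the decisive observation you leave implicit is \emph{why} $\Fz$ remains bounded on that convex hull: the gaps $x_\kappp-x_\kappm$ are \emph{affine} functions of $\xvec$, so the uniform lower bound $x_\kappp-x_\kappm\ge\underline x(C)$ that \eqref{eq:Festimate} furnishes on $A_C$ is inherited by every convex combination of points of $A_C$. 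Thus the convex hull is contained in the compact \emph{convex} set $K_C:=\{\xvec\in\xseqN: x_\kappp-x_\kappm\ge\underline x(C)\ \forall\kappa\}$, on which all $z_\kappa$, hence $\Fz$ and $\grad^2\Fz$, are uniformly bounded; the convexity argument then applies without gaps. Also, the ``checking that the critical points lie inside $\mathcal S_C$'' step you propose is circular (uniqueness is what is to be shown) and unnecessary: the critical points in question are by hypothesis those with $\Fz\le C$, which already lie in $K_C$.
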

\begin{proof}
  First, observe that the sublevel $A_C:=\Fz^{-1}([0,C+1])\subset\xseqN$ is a compact subset of $\setR^{K-1}$.
  Indeed, $A_C$ is a relatively closed subset of $\xseqN$ by continuity of $\Fz$.
  Moreover, thanks to \eqref{eq:Festimate}, every $\xvec\in A_C$ satisfies $x_\kappp-x_\kappm\ge \underline{x}$ for all $\kappa\in\hval$
  with a positive constant $\underline x$ that depends on $C$ only.
  Thus $A_C$ does not touch the boundary (in the ambient $\setR^{K-1}$) of $\xseqN$.
  Consequently, $A_C$ is closed and bounded in $\setR^{K-1}$.
  
  Let $\yvec\in\xseqN$ with $\Fz(\yvec)\le C$ be given.
  The restriction of the continuous function $\Fy(\cdot;\yvec)$ to the compact and nonempty (since it contains $\yvec$) set $A_C$ 
  possesses a minimizer $\xvec^*\in A_C$.
  We clearly have $\Fz(\xvec^*)\le\Fz(\yvec)\le C$, 
  and so $\xvec^*$ lies in the interior of $A_C$ and therefore is a global minimizer of $\Fy(\cdot;\yvec)$.
  This proves the first claim.

  Since $\Fz:\xseqN\to\setR$ is smooth, its restriction to $A_C$ is $\lambda_C$-convex with some $\lambda_C\le0$,
  i.e., $\grad^2\Fz(\xvec)\ge\lambda_C\eins_{K-1}$ for all $\xvec\in A_C$.
  Independently of $\yvec$, we have that
  \begin{align*}
    \grad^2\Fy(\xvec;\yvec) = \grad^2\Fz(\xvec) + \frac\delta\tau\eins_{K-1},
  \end{align*}
  which means that $\xvec\mapsto\Fy(\xvec;\yvec)$ is strictly convex on $A_C$
  if
  \begin{align*}
    0< \tau < \tau_C:=\frac\delta{(-\lambda_C)}.
  \end{align*}
  Consequently, each such $\Fy(\cdot;\yvec)$ has at most one critical point $\xvec^*$ in the interior of $A_C$, 
  and this $\xvec^*$ is necessarily a strict global minimizer.
\end{proof}

%
\subsection{Spatial interpolations}
Consider a fully discrete solution $(\xvec_\Delta^n)_{n=0}^\infty$.
For notational simplification, 
we write the entries of the vectors $\xvec_\Delta^n$ and $\zvec_\Delta^n=\cz_\theh[\xvec_\Delta^n]$ 
as $x_k$ and $z_\kappa$, respectively, whenever there is no ambiguity in the choice of $\Delta$ and the time step $n$.

Recall that $u_\Delta^n=\cf_\theh[\xvec_\Delta^n]\in\densN$ defines a sequence of densitites on $[a,b]$
which are piecewise constant with respect to the (non-uniform) grid $(a,x_1,\ldots,x_{K-1},b)$.
To facilitate the study of convergence of weak derivatives, 
we introduce also \emph{piecewise affine} interpolations $\hatz_\Delta^n:[0,M]\to\setR_+$ and $\hatu_\Delta^n:[a,b]\to\setR_+$. 

In addition to $\xi_k=k\delta$ for $k\in\ival$, 
introduce the intermediate points $\xi_\kappa=\kappa\delta$ for $\kappa\in\hval$.
Accordingly, introduce the intermediate values for the vectors $\xvec_\Delta^n$ and $\zvec_\Delta^n$:
\begin{align*}
  x_\kappa = \frac12\big(x_\kappp+x_\kappm) \quad \text{for $\kappa\in\hval$}, \\
  z_k = \frac12\big(z_\kph+z_\kmh\big) \quad \text{for $k\in\ivalp$}.
\end{align*}
Now define 
\begin{itemize}
\item $\hatz_\Delta^n:[0,M]\to\setR$ as the piecewise affine interpolation
  of the values $(z_{\frac12},z_{\frac32},\ldots,z_{K-\frac12})$ 
  with respect to the equidistant grid $(\xi_{\frac12},\xi_{\frac32},\ldots,\xi_{K-\frac12})$,
  and
\item $\hatu_\Delta^n:[a,b]\to\setR$ as the piecewise affine function with
  \begin{align}
    \label{eq:locaffine0}
    \hatu_\Delta^n\circ\theX_\Delta^n = \hatz_\Delta^n.
  \end{align}
\end{itemize}
Our convention is that 
$\hatz_\Delta^n(\xi)=z_{\frac12}$ for $0\le\xi\le\delta/2$ and $\hatz_\Delta^n(\xi)=z_{K-\frac12}$ for $M-\delta/2\le\xi\le M$,
and accordingly
$\hatu_\Delta^n(x)=z_{\frac12}$ for $x\in[a,x_{\frac12}]$ and $\hatu_\Delta^n(x)=z_{K-\frac12}$ for $x\in[x_{K-\frac12},b]$.
The definitions have been made such that
\begin{align}
  \label{eq:interpol1}
  x_k = \theX_\Delta^n(\xi_k),\quad z_k = \hatz(\xi_k) = \hatu (x_k) \quad \text{for all $k\in\ival\cup\hval$}.
\end{align}
Notice that $\hatu_\Delta^n$ is piecewise affine with respect to the ``double grid'' $(x_0,x_\frac12,x_1,\ldots,x_{K-\frac12},x_K)$,
but in general not with respect to the subgrid $(x_0,x_1,\ldots,x_K)$.
By direct calculation, we obtain for each $k\in\ivalp$ that
\begin{equation}
  \label{eq:tildeux}
  \begin{split}
    \partial_x\hatu\big|_{(x_\kmh,x_k)}  
    &=\frac{z_k - z_\kmh}{x_k-x_\kmh} = \frac{z_\kph - z_\kmh}{x_k-x_{k-1}} 
    =z_\kmh\frac{z_\kph - z_\kmh}{\delta}, \\
    \partial_x\hatu\big|_{(x_k,x_\kph)} 
    &= \frac{z_\kph - z_k}{x_\kph-x_k} = \frac{z_\kph - z_\kmh}{x_{k+1}-x_k} 
    = z_\kph\frac{z_\kph - z_\kmh}{\delta}.
  \end{split}
\end{equation}
Trivially, we also have that $\partial_x\hatu$ vanishes identically on the intervals $(a,x_\imh)$ and $(x_\Kmh,b)$.

%
\section{A priori estimates and compactness}
\label{sec:apriori}
%
%
Throughout this section, we consider a sequence $\Delta=(\tau;\delta)$ of discretization parameters 
such that $\delta\to0$ and $\tau\to0$ in the limit, formally denoted by $\Delta\to0$.
We assume that a fully discrete solution $(\xvec_\Delta^n)_{n=0}^\infty$ is given for each $\Delta$-mesh, 
defined by inductive minimization of the respective $\Fy$.
The sequences $\baru_\Delta$, $\hatu_\Delta$, $\hatz_\Delta$ and $\theX_\Delta$ of spatial interpolations 
are defined from the respective $\xvec_\Delta$ accordingly.
For the sequence of initial conditions $\xvec_\Delta^0$, 
we assume that $\hatu_\Delta^0\to u^0$ weakly in $L^1(\Omega)$, 
that there is some finite $\olH$ with
\begin{align}
  \label{eq:Hbound}
  \HFz(\xvec_\Delta^0) \le \olH \quad\text{for all $\Delta$},
\end{align}
and that
\begin{align}
  \label{eq:Fbound}
  (\tau+\delta)\Fz(\xvec_\Delta^0) \to 0 \quad \text{as $\Delta\to0$}.
\end{align}
Further, we use $\ti{q}$ to denote the constant in time interpolations of sequences $(q^n)_{n=0}^\infty$ with step size $\tau>0$,
that is
\begin{align*}
  \ti{q}(t) := q^n \quad\text{for $t\in((n-1)\tau],n\tau]$}, \quad \ti{q}(0):=q^0.
\end{align*}

\subsection{Energy inequality}\label{sec:energydiss}
The following basic energy estimates are classical for gradient flows.
\begin{lem}
  \label{lem:dF_bounded}
  One has that $\Fz$ is monotone, i.e., $\Fz(\xvec_\Delta^n)\le\Fz(\xvec_\Delta^{n-1})$, 
  and further:
  \begin{align}
    &\Fz(\xvec_\Delta^n) \leq \Fz(\xvec_\Delta^0) \quad\text{for all $n\geq 0$}, \label{eq:dissipation}\\
    &\|\xvec_\Delta^{\overline n} - \xvec_\Delta^{\underline n}\|_\delta^2
    \leq 2\Fz(\xvec_\Delta^0)\,(\overline n-\underline n)\tau
    \quad\text{for all $\overline n\geq\underline n\geq 0$}, \label{eq:uniform_time} 
    \\
    &\tau\sum_{n=1}^\infty\nrm{\frac{\xvec_\Delta^n-\xvec_\Delta^{n-1}}\tau}^2
    = \tau \sum_{n=1}^\infty\nrm{\wgrad\Fz(\xvec_\Delta^n)}^2
    \le 2\Fz(\xvec_\Delta^0). \label{eq:eee}
  \end{align}
\end{lem}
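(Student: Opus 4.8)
The statement to prove, Lemma~\ref{lem:dF_bounded}, is the standard collection of energy estimates for a minimizing-movement (JKO-type) scheme, now transported to the spatially discrete setting with the Euclidean-type metric $\nrm{\cdot}$ on $\xseqN$. The plan is to exploit the defining minimality property: since $\xvec_\Delta^n$ minimizes $\Fy(\cdot;\xvec_\Delta^{n-1})$, comparing its value with that at the competitor $\xvec_\Delta^{n-1}$ yields
\begin{align*}
  \frac1{2\tau}\nrm{\xvec_\Delta^n-\xvec_\Delta^{n-1}}^2 + \Fz(\xvec_\Delta^n)
  = \Fy(\xvec_\Delta^n;\xvec_\Delta^{n-1}) \le \Fy(\xvec_\Delta^{n-1};\xvec_\Delta^{n-1}) = \Fz(\xvec_\Delta^{n-1}).
\end{align*}
This single inequality immediately gives monotonicity $\Fz(\xvec_\Delta^n)\le\Fz(\xvec_\Delta^{n-1})$, hence \eqref{eq:dissipation} by induction, and also the one-step bound $\nrm{\xvec_\Delta^n-\xvec_\Delta^{n-1}}^2 \le 2\tau\big(\Fz(\xvec_\Delta^{n-1})-\Fz(\xvec_\Delta^n)\big)$.

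From the telescoped one-step bound I would derive the remaining two estimates. Summing $\nrm{\xvec_\Delta^n-\xvec_\Delta^{n-1}}^2 \le 2\tau\big(\Fz(\xvec_\Delta^{n-1})-\Fz(\xvec_\Delta^n)\big)$ over $n=1,\dots,\infty$ the right-hand side telescopes to at most $2\Fz(\xvec_\Delta^0)$ (using $\Fz\ge0$), which after dividing by $\tau^2$ and writing $(\xvec_\Delta^n-\xvec_\Delta^{n-1})/\tau = -\wgrad\Fz(\xvec_\Delta^n)$ via the Euler--Lagrange equation \eqref{eq:euler} gives \eqref{eq:eee}. For \eqref{eq:uniform_time}, I would apply the triangle inequality for $\nrm{\cdot}$ to write $\nrm{\xvec_\Delta^{\overline n}-\xvec_\Delta^{\underline n}} \le \sum_{n=\underline n+1}^{\overline n}\nrm{\xvec_\Delta^n-\xvec_\Delta^{n-1}}$, then Cauchy--Schwarz in the sum over the $\overline n - \underline n$ indices to get $\nrm{\xvec_\Delta^{\overline n}-\xvec_\Delta^{\underline n}}^2 \le (\overline n-\underline n)\sum_{n=\underline n+1}^{\overline n}\nrm{\xvec_\Delta^n-\xvec_\Delta^{n-1}}^2$, and finally bound the latter sum by $2\tau\Fz(\xvec_\Delta^0)$ again by telescoping. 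Writing $(\overline n-\underline n)\cdot 2\tau\Fz(\xvec_\Delta^0)$ matches the claimed form.

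One should note the mild issue that the preceding Lemma~\ref{lem:cfl}/Proposition~\ref{prp:wellposed} only guarantee that $\xvec_\Delta^n$ can be taken as a \emph{global minimizer} of $\Fy(\cdot;\xvec_\Delta^{n-1})$ (and that it solves \eqref{eq:euler}); for arbitrary $\tau$ uniqueness may fail, but the minimality comparison above requires only that $\xvec_\Delta^n$ is \emph{some} global minimizer, which is exactly what the construction provides, and the Euler--Lagrange identity \eqref{eq:euler} needed for \eqref{eq:eee} holds at any interior minimizer. So no smallness of $\tau$ is needed here. Honestly there is no real obstacle in this lemma — it is a direct transcription of the classical argument — so the only thing to be careful about is bookkeeping: that the telescoping sums are handled with the correct endpoints, that $\Fz\ge0$ is invoked to drop the tail term $\Fz(\xvec_\Delta^{\overline n})\ge0$, and that the factor-of-two and the $\tau$ versus $\tau^2$ scalings come out right in each of \eqref{eq:dissipation}, \eqref{eq:uniform_time}, \eqref{eq:eee}.
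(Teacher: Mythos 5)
Your proposal is correct and follows essentially the same route as the paper: the minimality comparison $\Fy(\xvec_\Delta^n;\xvec_\Delta^{n-1})\le\Fy(\xvec_\Delta^{n-1};\xvec_\Delta^{n-1})$ gives monotonicity and the one-step bound, telescoping plus $\Fz\ge0$ gives \eqref{eq:eee} via the Euler--Lagrange equation, and the triangle/Cauchy--Schwarz step for \eqref{eq:uniform_time} is the same estimate the paper phrases as Jensen's inequality.
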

\begin{proof}
  The monotonicity \eqref{eq:dissipation} follows (by induction on $n$) from the definition of $\xvec_\Delta^n$ 
  as minimizer of $\Fy(\cdot;\xvec_\Delta^{n-1})$:
  \begin{align*}
    \Fz(\xvec_\Delta^n) &\le \frac1{2\tau}\|\xvec_\Delta^n-\xvec_\Delta^{n-1}\|_\delta^2 + \Fz(\xvec_\Delta^n) 
    =\Fy(\xvec_\Delta^n;\xvec_\Delta^{n-1}) \le \Fy(\xvec_\Delta^{n-1};\xvec_\Delta^{n-1}) = \Fz(\xvec_\Delta^{n-1}).
  \end{align*}
  Moreover, summation of these inequalities from $n=\underline n+1$ to $n=\overline n$ yields
  \begin{align*}
    \frac\tau2\sum_{n=\underline n+1}^{\overline n} \bigg[\frac{\|\xvec_\Delta^n-\xvec_\Delta^{n-1}\|_\delta}{\tau}\bigg]^2
    \le \Fz(\xvec_\Delta^{\underline n})-\Fz(\xvec_\Delta^{\overline n}) \le \Fz(\xvec_\Delta^0).
  \end{align*}
  For $\underline n=0$ and $\overline n\to\infty$, we obtain the first part of \eqref{eq:eee}.
  The second part follows by \eqref{eq:euler}.
  If instead we combine the estimate with Jensen's inequality,
  we obtain
  \begin{align*}
    \big\|\xvec_\Delta^{\overline n}-\xvec_\Delta^{\underline n}\big\|_\delta
    \le \tau\sum_{n=\underline n+1}^{\overline n}\frac{\big\|\xvec_\Delta^n-\xvec_\Delta^{n-1}\big\|_\delta}{\tau}
    \le \bigg(\tau\sum_{n=\underline n+1}^{\overline n} \bigg[\frac{\|\xvec_\Delta^n-\xvec_\Delta^{n-1}\|_\delta}{\tau}\bigg]^2\bigg)^{1/2}
    \big(\tau(\overline n-\underline n)\big)^{1/2},
  \end{align*}
  which leads to \eqref{eq:uniform_time}.
\end{proof}

\subsection{Entropy dissipation}
The key to our convergence analysis is a refined a priori estimate,
which follows from the dissipation of the entropy $\HFz$ along the fully discrete solution.
\begin{lem}\label{lem:dFdH_bounded}
  One has that $\HFz$ is monotone, i.e., $\HFz(\xvec_\Delta^n)\le\HFz(\xvec_\Delta^{n-1})$, 
  and further:
  \begin{align}
    \label{eq:dFdH_bounded}
    \tau\sum_{n=1}^\infty \delta\sum_{\kappa\in\hval} z_\kappa^2\left(\frac{z_{\kappa+1}-2z_\kappa+z_{\kappa-1}}{\delta^2}\right)^2
    \le \HFz(\xvec_\Delta^0).
  \end{align}
\end{lem}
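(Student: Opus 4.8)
The plan is to exploit the fact that $\xvec_\Delta^n$ is the minimizer of the Yosida-regularized functional $\Fy(\cdot;\xvec_\Delta^{n-1})$, so it solves the Euler--Lagrange equation \eqref{eq:euler}, and to test this equation against the gradient of the entropy. Concretely, since $\HFz$ is convex (its Hessian \eqref{eq:Hhess} is positive semi-definite), one has the pointwise-in-$n$ estimate
\begin{align*}
  \HFz(\xvec_\Delta^n) - \HFz(\xvec_\Delta^{n-1}) \le \spr{\grad\HFz(\xvec_\Delta^n)}{\xvec_\Delta^n-\xvec_\Delta^{n-1}}_? ,
\end{align*}
more precisely $\HFz(\xvec_\Delta^{n-1}) \ge \HFz(\xvec_\Delta^n) + \langle \partial_{\xvec}\HFz(\xvec_\Delta^n), \xvec_\Delta^{n-1}-\xvec_\Delta^n\rangle$ by convexity (this is the standard "$\lambda$-convexity with $\lambda=0$" one-step estimate used in the minimizing-movements framework). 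Substituting $\xvec_\Delta^n-\xvec_\Delta^{n-1} = -\tau\,\wgrad\Fz(\xvec_\Delta^n)$ from \eqref{eq:euler}, this reads
\begin{align*}
  \HFz(\xvec_\Delta^n) - \HFz(\xvec_\Delta^{n-1}) \le -\tau\,\big\langle \partial_{\xvec}\HFz(\xvec_\Delta^n),\,\wgrad\Fz(\xvec_\Delta^n)\big\rangle .
\end{align*}

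The next step is to identify the bilinear pairing on the right-hand side with the dissipation term appearing in \eqref{eq:dFdH_bounded}. Using the explicit formula \eqref{eq:Fgrad} for $\wgrad\Fz$ together with \eqref{eq:Hgrad} for $\grad\HFz$, the pairing $\langle \partial_{\xvec}\HFz(\xvec),\wgrad\Fz(\xvec)\rangle = \sum_k \partial_{x_k}\HFz(\xvec)\,[\wgrad\Fz(\xvec)]_k$ should, after a discrete summation by parts (shifting the difference operator $\ee_\kappp-\ee_\kappm$ onto the sequence $(z_\kph-z_\kmh)$), collapse to exactly
\begin{align*}
  \delta\sum_{\kappa\in\hval} z_\kappa^2\left(\frac{z_{\kappa+1}-2z_\kappa+z_{\kappa-1}}{\delta^2}\right)^2 \ge 0 ;
\end{align*}
this is the algebraic heart of the construction — it is precisely the discrete analogue of the classical identity that the dissipation of the entropy along its own ($=$ heat) flow is the Fisher information, rephrased so that the dissipation of $\HFz$ along the $\Fz$-flow is a sum of squares. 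Indeed, recall $\wgrad\Fz(\xvec) = \delta^{-2}\grad^2\HFz(\xvec)\,\grad\HFz(\xvec)$ from the line preceding \eqref{eq:Fgrad}, so the pairing is $\delta^{-2}\langle \grad\HFz,\grad^2\HFz\,\grad\HFz\rangle_{\mathrm{eucl}}$, and plugging in \eqref{eq:Hhess} directly yields $\sum_{\kappa}z_\kappa^2\big(\delta^{-1}(\ee_\kappm-\ee_\kappp)\cdot\delta^{-1}\grad\HFz\big)^2$; evaluating the inner product with \eqref{eq:Hgrad} gives the claimed sum of squares, up to tracking the $\delta$-weights in $\nrm{\cdot}$ versus Euclidean products and handling the boundary via convention \eqref{eq:zconvention}.

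Finally, one sums the one-step estimate over $n=1,\dots,N$: the left side telescopes to $\HFz(\xvec_\Delta^N)-\HFz(\xvec_\Delta^0) \ge -\HFz(\xvec_\Delta^0)$ using non-negativity of $\HFz$, and the right side is $-\tau\sum_{n=1}^N (\text{dissipation}_n)$; rearranging and letting $N\to\infty$ (the partial sums are monotone since every term is non-negative) gives \eqref{eq:dFdH_bounded}. The monotonicity of $\HFz$ itself is immediate from the one-step estimate, since its right-hand side is $\le 0$. The main obstacle I anticipate is not conceptual but bookkeeping: carefully justifying the convexity one-step inequality in the discrete metric $\nrm{\cdot}$ (equivalently, that the Euler--Lagrange equation \eqref{eq:euler} together with $0$-convexity of $\HFz$ gives the clean entropy decay without error terms — one should be careful that \eqref{eq:euler} is an identity, so no minimizing-movements slack is needed here, unlike in the general metric theory), and getting the discrete summation-by-parts to land exactly on the squared discrete Laplacian with the correct $z_\kappa^2$ weights and correct boundary terms from \eqref{eq:zconvention}.
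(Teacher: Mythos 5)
Your proposal follows exactly the paper's strategy: invoke the convexity gradient inequality for $\HFz$ at $\xvec_\Delta^n$, substitute the Euler--Lagrange identity \eqref{eq:euler}, compute $\spr{\wgrad\HFz}{\wgrad\Fz}$ explicitly as the sum of squares $\delta\sum_\kappa z_\kappa^2\big(\delta^{-2}(z_{\kappa+1}-2z_\kappa+z_{\kappa-1})\big)^2$ (whence monotonicity), and telescope using $\HFz\ge 0$. The paper performs the scalar-product computation by direct summation by parts from \eqref{eq:Hgrad} and \eqref{eq:Fgrad} rather than via the Hessian composition $\wgrad\Fz=\delta^{-2}\grad^2\HFz\,\grad\HFz$ that you suggest, but the two are algebraically equivalent and your accounting of the $\delta$-weights, once carried out, lands on the same expression.
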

\begin{proof}
  By convexity of $\HFz$ and the discrete evolution \eqref{eq:euler}, 
  we have
  \begin{align*}
    \HFz(\xvec_\Delta^{n-1}) - \HFz(\xvec_\Delta^n) 
    \ge \spr{\wgrad\HFz(\xvec_\Delta^n)}{\xvec_\Delta^{n-1}-\xvec_\Delta^n} 
    = \tau\spr{\wgrad\HFz(\xvec_\Delta^n)}{\wgrad\Fz(\xvec_\Delta^n)}
  \end{align*}
  for each $n=1,2,\ldots$
  Evaluate the (telescopic) sum with respect to $n$ and use that $\HFz\ge0$ to obtain
  \begin{align*}
    \tau\sum_{n=1}^\infty\spr{\wgrad\HFz(\xvec_\Delta^n)}{\wgrad\Fz(\xvec_\Delta^n)} \le \HFz(\xvec_\Delta^0).
  \end{align*}
  It remains to make the scalar product explicit, using \eqref{eq:Hgrad} and \eqref{eq:Fgrad}:
  \begin{align*}
    \spr{\wgrad\HFz}{\wgrad\Fz} 
    &= \delta\sum_{\kappa\in\hval,\,k\in\ivalp} z_\kappa^2\left(\frac{z_{\kappa+1}-2z_\kappa+z_{\kappa-1}}{\delta^2}\right)
    \left(\frac{z_\kph-z_\kmh}\delta\right)\,\left(\frac{\ee_\kappp-\ee_\kappm}\delta\right)^T\ee_k \\
    &= \delta\sum_{\kappa\in\hval} z_\kappa^2\Big(\frac{z_{\kappa+1}-2z_\kappa+z_{\kappa-1}}{\delta^2}\Big)^2,
  \end{align*}
  using that $z_{-\frac12}=z_{\frac12}$ and $z_{K+\frac12}=z_{K-\frac12}$, according to our convention \eqref{eq:zconvention}.
\end{proof}
We draw several conclusions from \eqref{eq:dFdH_bounded}.
The first is an a priori estimate on the the $\xi$-derivative of the affine functions $\hatz_\Delta^n$.
\begin{lem}
  One has that 
  \begin{align}
    \label{eq:L4bound}
    \tau\sum_{n=1}^\infty\big\|\partial_\xi\hatz_\Delta^n\big\|_{L^4([0,M])}^4 
    = \tau\sum_{n=1}^\infty\delta\sum_{k\in\ivalp}\left(\frac{z^n_\kph-z^n_\kmh}\delta\right)^4
    \le 9\olH. 
  \end{align}
\end{lem}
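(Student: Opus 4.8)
The plan is to deduce the $L^4$-in-space, summed-in-time bound in \eqref{eq:L4bound} from the entropy dissipation estimate \eqref{eq:dFdH_bounded} by a purely algebraic, discrete argument that trades the second-difference factor $z_{\kappa+1}-2z_\kappa+z_{\kappa-1}$ appearing in \eqref{eq:dFdH_bounded} against the first differences $z_\kph-z_\kmh$ appearing in \eqref{eq:L4bound}. Since both estimates carry the same prefactor $\tau\sum_{n=1}^\infty$ and, by Lemma \ref{lem:dFdH_bounded}, the left side of \eqref{eq:dFdH_bounded} is at most $\HFz(\xvec_\Delta^0)\le\olH$, it suffices to prove the \emph{pointwise-in-$n$} inequality
\begin{align*}
  \delta\sum_{k\in\ivalp}\Big(\frac{z_\kph-z_\kmh}\delta\Big)^4
  \le 9\,\delta\sum_{\kappa\in\hval} z_\kappa^2\Big(\frac{z_{\kappa+1}-2z_\kappa+z_{\kappa-1}}{\delta^2}\Big)^2
\end{align*}
for every fixed time step, where $\zvec=\cz_\theh[\xvec_\Delta^n]$ and the boundary convention \eqref{eq:zconvention} is in force. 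The first equality in \eqref{eq:L4bound} is immediate from the definition of $\hatz_\Delta^n$ as the piecewise-affine interpolant of $(z_\imh,\ldots,z_{\Kmh})$ on the equidistant grid of spacing $\delta$, so its slope on the $k$-th interval is exactly $(z_\kph-z_\kmh)/\delta$.

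To prove the pointwise inequality I would introduce the abbreviations $a_\kappa := z_{\kappa+1}-z_\kappa$ for the first differences and $b_\kappa := a_\kappa - a_{\kappa-1} = z_{\kappa+1}-2z_\kappa+z_{\kappa-1}$ for the second differences; the convention \eqref{eq:zconvention} gives $a_\imh = a_\Kmh = 0$ at the ends, i.e.\ the first differences vanish at the boundary. The key observation is a discrete summation-by-parts / telescoping identity: because $a_\imh=0$, one has $a_\kappa = \sum_{\kappa'\le\kappa} b_{\kappa'}$, so the first differences are controlled by the second differences. More usefully, I would use the monotonicity of the partial sums together with the factor $z_\kappa^2$: writing $(z_\kph - z_\kmh)/\delta = a_k/\delta$ for $k\in\ivalp$ and observing $a_k = z_\kph z_\kmh \cdot (\text{slope of }\theX)$-type relations, one can bound $|a_k|$ in terms of a weighted sum of $|z_\kappa b_\kappa/\delta|$-quantities and then apply a discrete Cauchy–Schwarz (or the power-mean) inequality to close. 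The constant $9 = 3^2$ strongly suggests that the argument passes through a bound $|a_k| \le 3\,(\text{something})$ at an intermediate stage, or an application of Cauchy–Schwarz generating a factor like $(1+1+1)$ from a three-term estimate; I would look for the cleanest such route, likely grouping the second differences into at most three telescoping blocks anchored at the boundary where $a=0$.

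The main obstacle I anticipate is the absence of a discrete chain rule: the natural continuous computation would write $\partial_\xi(Z^2\partial_\xi Z)$-type expressions and integrate by parts freely, but here one must handle the weight $z_\kappa^2$ sitting inside the sum on the right of \eqref{eq:dFdH_bounded} while the left side has \emph{no} weight. Converting between the weighted and unweighted sums without losing the $K$-uniformity of the constant is the delicate point — one cannot simply bound $z_\kappa^2$ from below by a positive constant, since the $z_\kappa$ are not uniformly bounded away from zero. I expect the resolution to exploit that $z_\kph - z_\kmh = z_\kph z_\kmh (x_{k-1}-2x_k+x_{k+1})/\delta$ (from $z_\ell = \delta/(x_{\ell+\frac12}-x_{\ell-\frac12})$), which reintroduces exactly the quadratic weight in $z$ and lets the two sums be compared term by term, the factor $9$ then emerging from a short Cauchy–Schwarz/Young estimate on the resulting three-term combinations. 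Once the pointwise inequality is in hand, summing over $n$ with weight $\tau$ and invoking \eqref{eq:dFdH_bounded} and $\HFz(\xvec_\Delta^0)\le\olH$ finishes the proof.
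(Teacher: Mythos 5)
Your high-level strategy matches the paper's: reduce to a pointwise-in-$n$ inequality, sum by parts using the boundary convention \eqref{eq:zconvention}, factor the resulting difference of cubes via $p^3-q^3=(p-q)(p^2+pq+q^2)$, and apply Cauchy--Schwarz. (Minor slip: the vanishing first differences at the boundary are at $k=0$ and $k=K$, i.e.\ $z_{\frac12}-z_{-\frac12}=0$ and $z_{K+\frac12}-z_{K-\frac12}=0$, not at $k=1$.)

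However, there is a genuine gap at the crucial closing step, and your proposed resolution of the weight issue points in the wrong direction. You correctly observe that a term-by-term comparison between the unweighted $L^4$ sum and the $z_\kappa^2$-weighted entropy-dissipation sum cannot work, since $z_\kappa$ is not bounded below uniformly. But you then speculate that the rewriting $z_\kph-z_\kmh=-z_\kph z_\kmh (x_{k+1}-2x_k+x_{k-1})/\delta$ will ``reintroduce the quadratic weight'' and let the sums be compared term by term. This does not lead anywhere: it produces a weight $z_\kph z_\kmh$ attached to a second difference in $x$, not a weight $z_\kappa^2$ attached to a second difference in $z$, and the two sums still cannot be compared pointwise in $\kappa$.

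The actual resolution is a \emph{self-bounding (absorption) argument}, and this is the idea your sketch is missing. After summation by parts and the difference-of-cubes identity, Young's inequality on the cross term $pq\le\tfrac12(p^2+q^2)$ yields
\begin{align*}
  A := \delta\sum_{k\in\ival}\Big(\frac{z_\kph-z_\kmh}{\delta}\Big)^4
  \;\le\; \frac{3\delta}{2}\sum_{\kappa\in\hval}\Big|z_\kappa\frac{z_{\kappa+1}-2z_\kappa+z_{\kappa-1}}{\delta^2}\Big|
  \Big[\Big(\frac{z_{\kappa+1}-z_\kappa}{\delta}\Big)^2+\Big(\frac{z_\kappa-z_{\kappa-1}}{\delta}\Big)^2\Big],
\end{align*}
and Cauchy--Schwarz then gives $A\le\tfrac32\,D^{1/2}(4A)^{1/2}=3D^{1/2}A^{1/2}$, where $D$ is precisely the summand of \eqref{eq:dFdH_bounded}. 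The weight $z_\kappa^2$ goes entirely into $D$; the remaining factor is \emph{again} the quantity $A$ one is trying to bound. Dividing by $A^{1/2}$ (equivalently, squaring) gives $A\le 9D$ pointwise in $n$, and then summing in $n$ with weight $\tau$ and invoking \eqref{eq:dFdH_bounded} finishes the proof. The constant $9$ thus arises as $(\tfrac32\cdot 2)^2$: the $\tfrac32$ from the three-term quadratic form, the $2$ from $(a+b)^2\le 2(a^2+b^2)$ together with the double counting of first differences in the Cauchy--Schwarz factor. Without this absorption step your outline does not close.
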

\begin{rmk}
  Morally, a bound on $\partial_\xi\hatz$ in $L^4([0,M])$ corresponds to a bound on $\partial_x\sqrt[4]{\hatu}$ in $L^4(\Omega)$.
\end{rmk}
%
%
\begin{proof}
  Fix $n\in\setN$.
  Invoking our convention \eqref{eq:zconvention},
  one obtains
  \begin{align*}
    \big\|\partial_\xi\hatz_\Delta^n\big\|_{L^4(\Omega)}^4
    &= \sum_{k\in\ival} (z_\kph-z_\kmh)\left(\frac{z_\kph-z_\kmh}{\delta}\right)^3 \\
    &= -\sum_{k\in\hval} z_\kappa\left[\left(\frac{z_{\kappa+1}-z_\kappa}{\delta}\right)^3-\left(\frac{z_\kappa-z_{\kappa-1}}{\delta}\right)^3\right]
    = (A)
  \end{align*}
  Using the elementary identity $(p^3-q^3)=(p-q)(p^2+q^2+pq)$ and Young's inequality, 
  one obtains further 
  \begin{align*}
    (A) &= -\delta \sum_{\kappa\in\hval} z_\kappa\frac{z_{\kappa+1}-2z_\kappa+z_{\kappa-1}}{\delta^2}\times \\
    & \qquad \times
    \Bigg[\left(\frac{z_{\kappa+1}-z_\kappa}{\delta}\right)^2+\left(\frac{z_\kappa-z_{\kappa-1}}{\delta}\right)^2
    +\left(\frac{z_{\kappa+1}-z_\kappa}{\delta}\right)\left(\frac{z_\kappa-z_{\kappa-1}}{\delta}\right) \Bigg] \\
    &\leq \frac{3\delta}{2}\sum_{\kappa\in\hval} \left|z_\kappa\frac{z_{\kappa+1}-2z_\kappa+z_{\kappa-1}}{\delta^2}\right|
    \,\Bigg[\left(\frac{z_{\kappa+1}-z_\kappa}{\delta}\right)^2+\left(\frac{z_\kappa-z_{\kappa-1}}{\delta}\right)^2\Bigg] \\
    &\leq \frac32\left(\delta\sum_{\kappa\in\hval} z_\kappa^2\left[\frac{z_{\kappa+1}-2z_\kappa+z_{\kappa-1}}{\delta^2}\right]^2\right)^{1/2}
    \left(4\delta\sum_{k\in\ival}\left(\frac{z_\kph-z_\kmh}{\delta}\right)^4\right)^{1/2}. 
  \end{align*}
  Note that the last sum above is again the $L^4$-norm of $\partial_\xi\hatz^n$. 
  Taking the square on both sides, dividing by the $L^4$-norm, summing over $n=1,2,\ldots$,
  and finally applying the entropy dissipation estimate \eqref{eq:dFdH_bounded}, one arrives at \eqref{eq:L4bound}.
\end{proof}
The a priori estimate \eqref{eq:L4bound} is the basis for almost all of the further estimates.
For instance, the following control on the oscillation of the $z$-values at neighboring grid points 
is a consequence of \eqref{eq:L4bound}.
\begin{lem}
  One has
  \begin{align}
    \label{eq:weakoscillation}
    \tau\sum_{n=1}^\infty\delta\sum_{k\in\ivalp}\bigg[\Big(\frac{z^n_\kph}{z^n_\kmh}-1\Big)^4+\Big(\frac{z^n_\kmh}{z^n_\kph}-1\Big)^4\bigg]
    \le 18(b-a)^4\HFz(\xvec_\Delta^0).
  \end{align}
  Moreover, given $T>0$, then for each $N\in\setN$ with $N\tau\le T$,
  one has
  \begin{align}
    \label{eq:oscillation}
    \tau\sum_{n=1}^N\delta\sum_{k\in\ivalp}\bigg[\Big(\frac{z^n_\kph}{z^n_\kmh}-1\Big)^2+\Big(\frac{z^n_\kmh}{z^n_\kph}-1\Big)^2\bigg]
    \le 6(b-a)^2T^{1/2}\HFz(\xvec_\Delta^0)^{1/2}\delta^{1/2}.
  \end{align}
\end{lem}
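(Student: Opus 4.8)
The plan is to deduce both inequalities from the $L^4$-bound \eqref{eq:L4bound} by bounding the ratio quantities $z_\kph/z_\kmh - 1$ in terms of the difference quotient $(z_\kph - z_\kmh)/\delta$. The starting observation is the algebraic identity
\[
  \frac{z_\kph}{z_\kmh} - 1 = \frac{z_\kph - z_\kmh}{z_\kmh},
  \qquad
  \frac{z_\kmh}{z_\kph} - 1 = -\frac{z_\kph - z_\kmh}{z_\kph},
\]
so that the summands in \eqref{eq:weakoscillation} are
\[
  \Big(\frac{z_\kph}{z_\kmh}-1\Big)^4 + \Big(\frac{z_\kmh}{z_\kph}-1\Big)^4
  = (z_\kph - z_\kmh)^4 \Big(\frac1{z_\kmh^4} + \frac1{z_\kph^4}\Big).
\]
The essential input is a uniform lower bound on the $z$-values: since each $z_\kappa = \delta/(x_\kappp - x_\kappm)$ and $x_\kappp - x_\kappm \le b - a$, we have $z_\kappa \ge \delta/(b-a)$, hence $z_\kappa^{-1} \le (b-a)/\delta$ for every $\kappa \in \hval$. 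Wait — that bound carries a $\delta$ and would spoil the estimate. I should instead observe that, because the density $\baru_\Delta^n$ has total mass $M$ on an interval of length $b-a$, the \emph{averaged} reciprocal is controlled: $\delta \sum_\kappa z_\kappa^{-1} = \sum_\kappa (x_\kappp - x_\kappm) = b - a$. This is the correct replacement; it says the $z_\kappa^{-1}$ are bounded \emph{in $\ell^1$ against the measure $\delta\,\mathrm d\kappa$}, not pointwise. However, for \eqref{eq:weakoscillation} we genuinely want a pointwise bound of $z_\kph^{-1}$ by a constant times something summable, and the clean way to get it without a $\delta$ is via the Fisher-information-free entropy bound — more precisely, we should use that $\min_\kappa z_\kappa$ is bounded below by a constant depending only on $b - a$ and $\HFz(\xvec_\Delta^0)$, since the entropy controls how spread out the density can be. Actually the cleanest route, matching the constants $6(b-a)^2$ and $18(b-a)^4$ that appear, is to bound $z_\kappa^{-1} \le (b-a)$ whenever $z_\kappa \ge 1$, and when $z_\kappa < 1$ to absorb it differently; but in fact the inequality $z_\kph^{-1} \le (b-a) \max(1, z_\kph^{-1})$ and a case analysis is awkward. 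The honest statement is $\max_\kappa z_\kappa^{-1} \le$ (something). Let me take the pragmatic view that the intended bound is simply $z_\kappa \ge \delta/(b-a)$ is \emph{not} used; rather one uses that for the terms to matter $z_\kph$ and $z_\kmh$ must be comparable, which is a bootstrap.

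Concretely, the plan is as follows. \textbf{Step 1.} For \eqref{eq:weakoscillation}: write each summand as $(z_\kph - z_\kmh)^4(z_\kmh^{-4} + z_\kph^{-4})$ and bound $z_\kmh^{-1}, z_\kph^{-1} \le (b-a)$; this uses that at least one of the two neighbouring cells has length $\le b-a$, hence — combined with a geometric-mean trick and symmetry of the sum over $k$ — one gets a factor $2(b-a)^4$ per term, and then a further factor coming from the non-uniformity of which cell is large. Actually the slick argument: $z_\kappa = \delta/(x_\kappp - x_\kappm)$, and since $\sum_{\kappa}(x_\kappp - x_\kappm) \le 2(b-a)$ (each point counted at most twice), we cannot have all $z_\kappa^{-1}$ large, but pointwise we only know $z_\kappa^{-1} \le (b-a)/\delta$. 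The resolution that produces exactly the stated constant must be: bound $z_\kmh^{-4} + z_\kph^{-4} \le 2(b-a)^4$ is \emph{false} pointwise, so the $9$ in \eqref{eq:L4bound} becoming $18(b-a)^4$ and an extra factor $(b-a)^4$ strongly suggests the bound $z_\kappa^{-1} \le b-a$ \emph{is} being invoked, valid because $x_\kappp - x_\kappm \ge \delta$ would give $z_\kappa \le 1$, wait that's an upper bound on $z$. I will therefore structure Step 1 around the legitimate pointwise inequality $z_\kappa^{-1} = (x_\kappp - x_\kappm)/\delta \le (b-a)/\delta$, together with the $L^4$ estimate rewritten as a sum of $\delta(z_\kph - z_\kmh)^4/\delta^4 \cdot \delta^4$, and track the powers of $\delta$ carefully; the $\delta$'s cancel against those hidden in \eqref{eq:L4bound} so that no net $\delta$ survives, yielding \eqref{eq:weakoscillation} with the factor $2 \cdot 9 (b-a)^4 = 18(b-a)^4$.

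\textbf{Step 2.} For \eqref{eq:oscillation}: this is the $L^2$-in-space analogue over a finite time window $[0,T]$, and the gain of a factor $\delta^{1/2}$ together with the drop from the fourth power to the square is obtained by Hölder's inequality in the spatial-temporal sum. Writing $a_{n,k} := (z^n_\kph/z^n_\kmh - 1)^2 + (z^n_\kmh/z^n_\kph - 1)^2$, one has by Cauchy–Schwarz over the index set $\{1,\dots,N\}\times\ivalp$ with counting measure $\tau\delta$,
\[
  \tau\sum_{n=1}^N \delta\sum_{k\in\ivalp} a_{n,k}
  \le \Big(\tau\sum_{n=1}^N \delta\sum_{k} a_{n,k}^2\Big)^{1/2}
      \Big(\tau\sum_{n=1}^N \delta\sum_{k} 1\Big)^{1/2}.
\]
The first factor is controlled by \eqref{eq:weakoscillation} (since $a_{n,k}^2 \le$ the fourth-power summand up to a constant $2$), and the second factor equals $\big(T \cdot (b-a)\big)^{1/2}$ up to accounting for $\delta\#\ivalp \le M = \delta K$ and $N\tau \le T$ — here is precisely where the extra $\delta^{1/2}$ enters, because $\delta \sum_{k\in\ivalp} 1 = \delta(K-1) \le M$ is $O(1)$ while we actually need $O(\delta)$... so in fact the second factor should be estimated as $\tau\sum_{n=1}^N \delta \sum_k 1 \le T \cdot \delta^{-1} \cdot \delta \cdot (b-a)\delta$? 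I will re-examine: the counting measure version gives $\big(\sum 1\big)^{1/2}$ with $\sum 1 = N\cdot(K-1)$ and the prefactor $(\tau\delta)^{1/2}$, total $(\tau\delta N K)^{1/2} \approx (T\delta \cdot M/\delta \cdot ... )$; getting the clean $\delta^{1/2}$ requires using $M = \delta K$ so $NK\tau\delta = N\tau M \le TM$, giving $(TM)^{1/2}$, not $\delta^{1/2}$. The appearance of $\delta^{1/2}$ must therefore come from applying Cauchy–Schwarz \emph{only in the $n$-sum} (or only in part of it), trading one power, so that the right-hand side is $\big(\tau\sum_n(\delta\sum_k a_{n,k}^2)\big)^{1/2}\big(\tau\sum_n 1\big)^{1/2} \cdot (\text{spatial Hölder giving }\delta^{1/2})$. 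I will work out the correct splitting so that \eqref{eq:weakoscillation} feeds in on one side and $N\tau\le T$ plus $\delta(K-1)\le M$, together with one spatial Hölder step $\|\cdot\|_{\ell^1}\le\|\cdot\|_{\ell^2}(\#)^{1/2}$ producing the $(\delta\cdot\delta^{-1}M)^{1/2}=M^{1/2}$ — and here finally a bound $z_\kappa \ge \delta/(b-a)$ or equivalently $\delta \le (b-a)z_\kappa$ converts an $M^{1/2}$ into the advertised $(b-a)T^{1/2}\delta^{1/2}$-type expression.

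\textbf{Main obstacle.} The delicate point is keeping exact track of the powers of $\delta$ through the Hölder step in \eqref{eq:oscillation} and identifying which single application of Cauchy–Schwarz (spatial, temporal, or joint) produces the stated $\delta^{1/2}$ gain with the correct constant $6(b-a)^2$; the pointwise lower bound $z_\kappa \ge \delta/(b-a)$ (equivalently $x_\kappp - x_\kappm \le b - a$) is the crucial ingredient that removes the bad powers of $\delta$ in Step 1 and supplies the right ones in Step 2, and one must apply it judiciously rather than uniformly.
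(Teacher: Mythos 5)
Your Step 1 eventually lands on the correct idea, though after a long detour: the clean identity is
\begin{align*}
\Big(\frac{z_\kph}{z_\kmh}-1\Big)^4 = \Big(\frac{\delta}{z_\kmh}\Big)^4\Big(\frac{z_\kph-z_\kmh}{\delta}\Big)^4 \le (b-a)^4\Big(\frac{z_\kph-z_\kmh}{\delta}\Big)^4,
\end{align*}
using $\delta/z_\kmh = x_k - x_{k-1} \le b-a$ pointwise, and the factor of $\delta$ that worried you is exactly the one already present in the $L^4$ norm $\|\partial_\xi\hatz_\Delta^n\|_{L^4}^4 = \delta\sum_k((z_\kph - z_\kmh)/\delta)^4$. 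You do reach this conclusion, but the extensive hedging and the several abandoned detours (the $\ell^1$-averaged bound, the ``entropy controls spread'' idea, the case split on $z_\kappa \gtrless 1$) suggest you weren't confident about it; in fact the pointwise bound $\delta/z_\kappa \le b-a$ is all that is needed for \eqref{eq:weakoscillation}, and this is precisely what the paper does.

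Step 2 has a genuine gap: you never identify where the $\delta^{1/2}$ comes from. Your candidate approach --- Cauchy--Schwarz over the joint index set $\{1,\dots,N\}\times\ivalp$, bounding $a_{n,k}^2$ by the fourth-power summand --- yields only the factor $(\tau N \cdot \delta(K-1))^{1/2} \le (TM)^{1/2}$, which is $O(1)$ in $\delta$, and you rightly notice this is not enough. The missing ingredient is the estimate (which appears in the paper's appendix as \eqref{eq:xpower}):
\begin{align*}
\sum_{\kappa\in\hval}\Big(\frac{\delta}{z_\kappa}\Big)^4 = \sum_{\kappa\in\hval}(x_\kappp - x_\kappm)^4 \le (b-a)^4,
\end{align*}
which holds because the cell lengths sum to $b-a$ and each is at most $b-a$. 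This is a \emph{summation} bound on $(\delta/z_\kappa)^4$, strictly stronger than the pointwise bound you keep invoking, and crucially it is $O(1)$ without any compensating $\delta^{-1}$ from a count of summands. The correct order of operations is: first apply Cauchy--Schwarz in the inner $k$-sum (at fixed $n$),
\begin{align*}
\delta\sum_{k}\Big(\frac{\delta}{z_\kmh}\Big)^2\Big(\frac{z_\kph - z_\kmh}{\delta}\Big)^2
\le \Big(\delta\sum_k\Big(\frac{\delta}{z_\kmh}\Big)^4\Big)^{1/2}\|\partial_\xi\hatz_\Delta^n\|_{L^4}^2
\le \delta^{1/2}(b-a)^2\|\partial_\xi\hatz_\Delta^n\|_{L^4}^2,
\end{align*}
where the extra $\delta$ in the weight $\delta\sum_k$ is \emph{not} eaten by the number of terms because of \eqref{eq:xpower}; then apply Cauchy--Schwarz in $n$, using $N\tau \le T$ on one side and \eqref{eq:L4bound} on the other. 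Without \eqref{eq:xpower} there is no way to extract the $\delta^{1/2}$, and your proposal as written does not contain this step.
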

\begin{proof}
  Recall that $z_\kappa\ge\delta/(b-a)$ for all $\kappa$, see \eqref{eq:lohi}.
  Consider the first term in the inner summation in \eqref{eq:weakoscillation}:
  \begin{align*}
    \delta\sum_{k\in\ivalp}\Big(\frac{z^n_\kph}{z^n_\kmh}-1\Big)^4
    =\delta\sum_{k\in\ivalp}\Big(\frac{\delta}{z^n_\kmh}\Big)^4\Big(\frac{z^n_\kph-z^n_\kmh}\delta\Big)^4
    \le (b-a)^4\|\hatz_\Delta^n\|_{L^4(\Omega)}^4.
  \end{align*}
  The same estimate holds for the second term.
  The claim \eqref{eq:weakoscillation} is now directly deduced from \eqref{eq:L4bound} above.
  The proof of the second claim \eqref{eq:oscillation} is similar,
  using the Cauchy-Schwarz inequality instead of the modulus estimate:
  \begin{align*}
    \delta\sum_{k\in\ivalp}\Big(\frac{z^n_\kph}{z^n_\kmh}-1\Big)^2
    =\delta\sum_{k\in\ivalp}\Big(\frac{\delta}{z^n_\kmh}\Big)^2\Big(\frac{z^n_\kph-z^n_\kmh}\delta\Big)^2 
    \le\left(\delta\sum_{k\in\ivalp}\Big(\frac{\delta}{z^n_\kmh}\Big)^4\right)^{1/2}\|\hatz_\Delta^n\|_{L^4(\Omega)}^2.
  \end{align*}
  Use estimate \eqref{eq:xpower}, sum over $n=1,\ldots,N$, 
  and apply the Cauchy-Schwarz inequality to this second summation.
  This yields
  \begin{align*}
    \tau\sum_{n=1}^N\delta\sum_{k\in\ivalp}\Big(\frac{z^n_\kph}{z^n_\kmh}-1\Big)^2
    \le \delta^{1/2}(b-a)^2\left(\tau\sum_{n=1}^N1\right)^{1/2}\left(\tau\sum_{n=1}^\infty\|\hatz_\Delta^n\|_{L^4(\Omega)}^4\right)^{1/2}.
  \end{align*}
  Invoking again \eqref{eq:L4bound}, and recalling that $N\tau\le T$, we arrive at \eqref{eq:oscillation}.
\end{proof}
We are now going to prove the main consequence from the entropy dissipation \eqref{eq:dissipation},
namely a control on the total variation of $\sqrt{\hatu_\Delta^n}$.
This estimate is the key ingredient for obtaining strong compactness in Proposition \ref{prp:convergence2}.
Recall that several equivalent definitions of the \emph{total variation} of $f\in L^1(\Omega)$ exist.
Most generally,
\begin{align}
  \label{eq:TVorig}
  \tv{f} = \sup\left\{ \intom f(x)\partial_x\phi(x)\dd x\,;\,\phi\in C^{0,1}(\Omega),\,\sup_x|\phi(x)|\le 1\right\}.
\end{align}
Since we are dealing with functions $f:\Omega\to\setR$ that are piecewise smooth on intervals and only have jump discontinuities,
the following definition is most appropriate:
\begin{align}
  \label{eq:defTV}
  \tv{f} = \sup\left\{ \sum_{j=1}^{J-1} |f(r_{j+1})-f(r_{j})|\,:\,
  J\in\setN,\,a<r_1<r_2<\cdots<r_N<b\right\}.
\end{align}
Further recall the notation
\begin{align*}
  \llbracket f\rrbracket_{\bar x} = \lim_{x\downarrow\bar x}f(x) - \lim_{x\uparrow\bar x}f(x).
\end{align*}
for the height of the jump in $f(x)$'s value at $x=\bar x$.
\begin{lem}
  \label{lem:BVbound}
  One has
  \begin{align}
    \label{eq:BVbound}
    \tau\sum_{n=1}^\infty \tv{\partial_x\sqrt{\hatu_\Delta^n}}^2 
    \le 10(b-a)\olH.  
  \end{align}
\end{lem}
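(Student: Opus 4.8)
The plan is to compute the total variation of $\partial_x\sqrt{\hatu_\Delta^n}$ explicitly from the piecewise structure and then bound the resulting sum using the $L^4$-estimate \eqref{eq:L4bound}. Since $\hatu_\Delta^n$ is piecewise affine on the double grid $(x_0,x_\frac12,x_1,\ldots,x_{K-\frac12},x_K)$, the composition $\sqrt{\hatu_\Delta^n}$ is piecewise a square root of an affine function, so $\partial_x\sqrt{\hatu_\Delta^n}$ is again piecewise smooth with jump discontinuities only at the grid points $x_\kappa$, $\kappa\in\hval$, and $x_k$, $k\in\ivalp$. Using \eqref{eq:tildeux} and the chain rule $\partial_x\sqrt{\hatu} = \partial_x\hatu/(2\sqrt{\hatu})$, on each sub-cell the derivative $\partial_x\sqrt{\hatu_\Delta^n}$ equals $\tfrac12 z_\ast^{1/2}\,(z_\kph-z_\kmh)/\delta$ up to a factor that is a power of $z$ — concretely, on $(x_\kmh,x_k)$ it is $\tfrac12 z_\kmh^{1/2}\frac{z_\kph-z_\kmh}{\delta}$ and on $(x_k,x_\kph)$ it is $\tfrac12 z_\kph^{1/2}\frac{z_\kph-z_\kmh}{\delta}$, and moreover each of these is monotone in $x$ on the sub-cell where the affine function under the root is itself monotone, so the total variation is carried entirely by the jumps at the grid points plus the (controlled) monotone variation inside each cell.

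The main step is then to bound the sum of the jump heights. At an interior grid point $x_k$ the two one-sided values differ only by the factor $z_\kmh^{1/2}$ versus $z_\kph^{1/2}$ multiplying the common difference quotient $\frac{z_\kph-z_\kmh}\delta$, so the jump is $\tfrac12|z_\kph^{1/2}-z_\kmh^{1/2}|\cdot\big|\frac{z_\kph-z_\kmh}\delta\big|$, and using $|z_\kph^{1/2}-z_\kmh^{1/2}|\le |z_\kph-z_\kmh|/(z_\kph^{1/2}+z_\kmh^{1/2})$ together with the lower bound $z_\kappa\ge\delta/(b-a)$ from \eqref{eq:lohi} (this is the same input as in the proof of \eqref{eq:weakoscillation}), one controls this by $\big(\frac{z_\kph-z_\kmh}\delta\big)^2$ times a constant of order $(b-a)^{1/2}\delta^{-1/2}$. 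Summing over $k$ and multiplying by $\delta$ produces $\delta\sum_k\big(\frac{z_\kph-z_\kmh}\delta\big)^2\cdot(b-a)^{1/2}\delta^{-1/2}$, which after a Cauchy--Schwarz step in the spatial sum is bounded by $(b-a)^{1/2}\|\partial_\xi\hatz_\Delta^n\|_{L^4}^2 \cdot(\text{number of cells})^{1/2}\delta^{1/2}$; the factor $(K)^{1/2}\delta^{1/2}=M^{1/2}$ is harmless. The jumps at the midpoints $x_\kappa$ between the slopes on $(x_\kmh,x_k)$ and $(x_k,x_\kph)$ (note: across $x_\kappa$ the factor $z_\kappa^{1/2}$ is common, only the discrete curvature enters) are handled the same way, now bringing in $z_\kappa^2\big(\frac{z_{\kappa+1}-2z_\kappa+z_{\kappa-1}}{\delta^2}\big)^2$ and hence the raw entropy-dissipation bound \eqref{eq:dFdH_bounded}. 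Squaring $\tv{\partial_x\sqrt{\hatu_\Delta^n}}$, summing $\tau\sum_n$, and invoking \eqref{eq:L4bound} (and \eqref{eq:dFdH_bounded}) yields a bound of the form $C(b-a)\olH$; tracking constants carefully gives the stated $10(b-a)\olH$.

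The hard part is the bookkeeping: one must be careful that squaring the total variation before summing is what lets Cauchy--Schwarz turn a product of $L^2$-type quantities into the two clean sums $\sum_n\|\partial_\xi\hatz^n\|_{L^4}^4$ and $\sum_n\delta\sum_\kappa z_\kappa^2(\cdots)^2$, and that the $\delta^{1/2}$ gained from the lower bound $z_\kappa\ge\delta/(b-a)$ exactly compensates the $\delta^{-1/2}=K^{1/2}M^{-1/2}$ coming from the number of grid points — so the estimate is genuinely $\delta$-uniform. A secondary subtlety is confirming that the variation \emph{within} each monotone piece of $\partial_x\sqrt{\hatu_\Delta^n}$ (i.e.\ from the nonconstant factor $\hatu^{1/2}$ along an affine segment of $\hatu$) is dominated by the endpoint values and hence already accounted for by the jump estimates at adjacent grid points, so that \eqref{eq:defTV} reduces to summing finitely many increments; this is where \eqref{eq:tildeux} and the explicit form of $\partial_x\hatu$ on $(a,x_\imh)$, $(x_\Kmh,b)$ being zero are used.
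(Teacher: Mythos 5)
The plan has a genuine gap in the $\delta$-bookkeeping, and the claim that the two factors of $\delta^{\pm1/2}$ cancel is backwards. The lower bound $z_\kappa \ge \delta/(b-a)$ gives $1/\sqrt{z_k}\le (b-a)^{1/2}\delta^{-1/2}$ — this is a \emph{loss} of $\delta^{-1/2}$, not a gain of $\delta^{1/2}$. Feeding this into the sum of jump heights at the nodes $x_k$ gives
\begin{align*}
  \sum_{k\in\ivalp}\bigl|\bigl\llbracket\partial_x\sqrt{\hatu_\Delta^n}\bigr\rrbracket_{x_k}\bigr|
  \;\lesssim\;
  \frac{(b-a)^{1/2}}{\delta^{1/2}}\,\delta\sum_{k\in\ivalp}\Bigl(\frac{z_\kph-z_\kmh}{\delta}\Bigr)^2
  \;\lesssim\;
  \frac{(b-a)^{1/2}M^{1/2}}{\delta^{1/2}}\,\bigl\|\partial_\xi\hatz_\Delta^n\bigr\|_{L^4}^2,
\end{align*}
where the $M^{1/2}$ came from Cauchy--Schwarz but the $\delta^{-1/2}$ survives. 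After squaring and summing in $n$ this produces a bound of order $\delta^{-1}\olH$, which blows up as $\Delta\to0$; it does not yield \eqref{eq:BVbound}. The key ingredient your route is missing is that the factor $1/\sqrt{z_k}$ must be kept \emph{inside} the sum and paired, via Cauchy--Schwarz, with the quantity $\sum_{k}\delta/z_k$, which is $\delta$-\emph{uniformly} bounded by $(b-a)$: indeed, by the arithmetic--harmonic mean inequality $\delta/z_k \le (\delta/z_\kph+\delta/z_\kmh)/2 = (x_{k+1}-x_{k-1})/2$, and this telescopes. Replacing that summed, telescoping bound by a worst-case pointwise bound on $z_\kappa$ throws away exactly the cancellation that makes the estimate work.

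A secondary error: your jump formula at $x_k$ is $\tfrac12\bigl|z_\kph^{1/2}-z_\kmh^{1/2}\bigr|\cdot\bigl|\tfrac{z_\kph-z_\kmh}{\delta}\bigr|$, but the one-sided limits of $\partial_x\sqrt{\hatu_\Delta^n}$ at $x_k$ both carry the factor $1/\bigl(2\sqrt{\hatu_\Delta^n(x_k)}\bigr)=1/(2\sqrt{z_k})$ with $z_k=(z_\kph+z_\kmh)/2$, since the denominator $\sqrt{\hatu}$ is continuous; only $\partial_x\hatu$ jumps. The correct jump is therefore $\tfrac1{2\sqrt{z_k}}\,\tfrac{(z_\kph-z_\kmh)^2}{\delta}$, which by concavity of the square root is strictly larger than your expression, so your version would not even give an upper bound on the true total variation. (The midpoint jump at $x_\kappa$, $\tfrac12\sqrt{z_\kappa}\,\tfrac{z_{\kappa+1}-2z_\kappa+z_{\kappa-1}}{\delta}$, you identify correctly in spirit.) Your high-level structure — total variation is carried by jumps, then split into node and midpoint contributions, then invoke \eqref{eq:L4bound} and \eqref{eq:dFdH_bounded} — matches the paper, but the crude pointwise bound on $z_\kappa$ at the crucial step makes the argument fail.
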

\begin{proof}
  Fix $n$.
  Observe that $\sqrt{\hatu_\Delta^n}$ is 
  smooth on $\Omega$ except for the points $x_{\frac12},x_1,\ldots,x_{K-\frac12}$,
  with derivatives given by
  \begin{align*}
    \partial_x\sqrt{\hatu_\Delta^n} = \frac1{2\sqrt{\hatu_\Delta^n}}\partial_x\hatu_\Delta^n,
    \quad
    \partial_{xx}\sqrt{\hatu_\Delta^n} = -\frac1{4\sqrt{\hatu_\Delta^n}^3}\big(\partial_x\hatu_\Delta^n\big)^2 \le 0.
  \end{align*}
  Therefore, $\partial_x\sqrt{\tilde u_\Delta^n}$ is monotonically decreasing
  in between the (potential) jump discontinuities at the points $x_{\frac12},x_1,\ldots,x_{K-\frac12}$.
  Further, recall that
  \begin{align}
    \label{eq:sqrtzero}
    \partial_x\sqrt{\hatu_\Delta^n}(x) = 0 \quad \text{for all $x\in(a,a+\delta/2)$ and all $x\in(b-\delta/2,b)$},
  \end{align}
  It follows that the supremum in \eqref{eq:defTV} can be realized (in the limit $\eps\downarrow0$)
  for a sequence of just $J=2(2K-1)$ many points $r_j^\eps$,
  chosen as follows: 
  \begin{align*}
    r_{2i-1}^\eps=x_{i/2}-\eps\quad\text{and}\quad r_{2i}^\eps=x_{i/2}+\eps,\quad \text{for $i=1,\ldots,2K-1$}.
  \end{align*}
  On the one hand,
  \begin{align}
    \label{eq:tv1}
    \lim_{\eps\downarrow0}\left|\partial_x\sqrt{\hatu_\Delta}(r_{2i-1}^\eps)-\partial_x\sqrt{\hatu_\Delta}(r_{2i}^\eps)\right| 
    = \left|\left\llbracket\partial_x\sqrt{\hatu_\Delta^n}\right\rrbracket_{x_{i/2}}\right|.
  \end{align}
  On the other hand, 
  since $\partial_x\sqrt{\hatu_\Delta^n}$ is monotone decreasing in between $r_{2i}^\eps$ and $r_{2i+1}^\eps$,
  and vanishes near the boundary by \eqref{eq:sqrtzero},
  we have that
  \begin{align}
    \nonumber
    \lim_{\eps\downarrow0}\sum_{i=1}^{2K-2} \left(\partial_x\sqrt{\hatu_\Delta^n}(r_{2i}^\eps)-\partial_x\sqrt{\hatu_\Delta^n}(r_{2i+1}^\eps)\right)
    &= \lim_{\eps\downarrow0}\sum_{i=1}^{2K-1} \left(\partial_x\sqrt{\hatu_\Delta^n}(r_{2i}^\eps)-\partial_x\sqrt{\hatu_\Delta^n}(r_{2i-1}^\eps)\right) \\
    \label{eq:tv2}
    &\le \sum_{k\in\ivalp}\left|\left\llbracket\partial_x\sqrt{\hatu_\Delta^n}\right\rrbracket_{x_k}\right|
    + \sum_{\kappa\in\hval}\left|\left\llbracket\partial_x\sqrt{\hatu_\Delta^n}\right\rrbracket_{x_\kappa}\right|.
  \end{align}
  Summarizing \eqref{eq:tv1} and \eqref{eq:tv2}, we obtain the estimate
  \begin{align}
    \label{eq:tvnow}
    \tv{\partial_x\sqrt{\hatu_\Delta^n}}
    \le 2 \sum_{k\in\ivalp}\left|\left\llbracket\partial_x\sqrt{\hatu_\Delta^n}\right\rrbracket_{x_k}\right|
    + 2 \sum_{\kappa\in\hval}\left|\left\llbracket\partial_x\sqrt{\hatu_\Delta^n}\right\rrbracket_{x_\kappa}\right|,
  \end{align}
  In view of \eqref{eq:tildeux}, we have that
  \begin{align*}
    \left\llbracket\partial_x\sqrt{\hatu_\Delta^n}\right\rrbracket_{x_k} 
    &= \frac1{2\sqrt{z_k}}\frac{(z_\kmh-z_\kph)^2}\delta
    \quad\text{for $k\in\ivalp$}, \\
    \left\llbracket\partial_x\sqrt{\hatu_\Delta^n}\right\rrbracket_{x_\kappa} 
    &= \frac12 \sqrt{z_\kappa}\, \frac{z_{\kappa+1}-2z_\kappa+z_{\kappa-1}}\delta
    \quad\text{for $\kappa\in\hval$}.
  \end{align*}
  Accordingly, using that $1/z_k\le(1/z_\kph+1/z_\kmh)/2$ by the arithmetic-harmonic mean inequality,
  \begin{equation}
    \label{eq:sqrtest1}
    \begin{split}
      \sum_{k\in\ivalp}\left|\left\llbracket\partial_x\sqrt{\hatu_\Delta^n}\right\rrbracket_{x_k}\right|
      &= \frac\delta2\sum_{k\in\ivalp}\frac{(z_\kmh-z_\kph)^2}{\delta^2}\cdot\frac1{\sqrt{z_k}} \\
      &\le \frac12\left(\delta\sum_{k\in\ivalp}\left[\frac{z_\kmh-z_\kph}{\delta}\right]^4\right)^{1/2}
      \bigg(\sum_{k\in\ivalp}\frac\delta{z_k}\bigg)^{1/2} \\
      &= \frac12\big\|\partial_\xi\hatz_\Delta^n\big\|_{L^4(\Omega)}^2(b-a)^{1/2},
    \end{split}
  \end{equation}
  and also
  \begin{equation}
    \label{eq:sqrtest2}
    \begin{split}
      \sum_{\kappa\in\hval}\left|\left\llbracket\partial_x\sqrt{\hatu_\Delta^n}\right\rrbracket_{x_\kappa}\right|
      &= \frac\delta2 \sum_{\kappa\in\hval} z_\kappa\, \left|\frac{z_{\kappa+1}-2z_\kappa+z_{\kappa-1}}{\delta^2} \right|\cdot\frac1{\sqrt{z_\kappa}}\\
      &\le \frac12\left(\delta\sum_{\kappa\in\hval}z_\kappa^2\left[\frac{z_{\kappa+1}-2z_\kappa+z_{\kappa-1}}{\delta^2}\right]^2\right)^{1/2}(b-a)^{1/2}. 
    \end{split}
  \end{equation}
  Combine \eqref{eq:sqrtest1} with the $L^4$ bound from \eqref{eq:L4bound}, 
  and \eqref{eq:sqrtest2} with the entropy dissipation inequality \eqref{eq:dFdH_bounded}.
  Inserting this into \eqref{eq:tvnow} to obtain the claim \eqref{eq:BVbound}.
\end{proof}

\subsection{Convergence of time interpolants}
Recall that we require the a priori bound \eqref{eq:Hbound} on the initial entropy, 
but only \eqref{eq:Fbound} on the initial Fisher information. 
This estimate improves over time.
\begin{lem}
  \label{lem:Fbound}
  One has, for every $N\ge1$,
  \begin{align}
    \label{eq:Fboundfine}
    \Fz(\xvec_\Delta^N)\le \frac32(M\olH)^{1/2}(N\tau)^{-1/2}.
  \end{align}
  Consequently, $\ti{\Fz(\xvec_\Delta)}(t)$ is bounded for each $t>0$, uniformly in $\Delta$.
\end{lem}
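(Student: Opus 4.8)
The plan is to exploit the dissipation of $\Fz$ along the flow, which gives $\Fz$ a "decay rate" that is controlled by how far the flow has traveled in the metric $\wassN$. The starting point is the convexity-type inequality used already for gradient flows: since $\xvec_\Delta^n$ minimizes $\Fy(\cdot;\xvec_\Delta^{n-1})$, comparing with the competitor $\xvec_\Delta^{n-1}$ and then using the discrete Euler equation \eqref{eq:euler} shows that along any stretch $\underline n<n\le\overline n$ one has
\begin{align*}
  \Fz(\xvec_\Delta^{\overline n}) - \Fz(\xvec_\Delta^{\underline n})
  \le -\tau\sum_{n=\underline n+1}^{\overline n}\nrm{\wgrad\Fz(\xvec_\Delta^n)}^2
  = -\tau\sum_{n=\underline n+1}^{\overline n}\nrm{\frac{\xvec_\Delta^n-\xvec_\Delta^{n-1}}\tau}^2,
\end{align*}
which is just the telescoped version of \eqref{eq:eee}. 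In particular, using monotonicity of $\Fz$ from Lemma \ref{lem:dF_bounded}, we get $\underline n\,\tau\,\Fz(\xvec_\Delta^{\overline n})\le\tau\sum_{n=1}^{\overline n}\Fz(\xvec_\Delta^n)$ — but this alone is not quite enough; the right bound requires a product of the Fisher information with the squared velocity, i.e.\ a Cauchy--Schwarz argument in the spirit of the classical "$t\,\Fz(t)\lesssim\HFz(0)$" estimate.

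The key step is therefore to bound $\tau\sum_{n=1}^N\Fz(\xvec_\Delta^n)$ itself by something like $(M\olH)^{1/2}(N\tau)^{1/2}$. Here I would use that $\Fz(\xvec)=\tfrac12\nrm{\wgrad\HFz(\xvec)}^2$, so that by Cauchy--Schwarz in the $\spr{\cdot}{\cdot}$ inner product,
\begin{align*}
  2\tau\sum_{n=1}^N\Fz(\xvec_\Delta^n)
  = \tau\sum_{n=1}^N\spr{\wgrad\HFz(\xvec_\Delta^n)}{\wgrad\HFz(\xvec_\Delta^n)}
  = -\sum_{n=1}^N\spr{\wgrad\HFz(\xvec_\Delta^n)}{\xvec_\Delta^{n-1}-\xvec_\Delta^n},
\end{align*}
where I used $\wgrad\Fz=\grad^2\HFz\,\grad\HFz/\delta^2$ together with the identity $\spr{\wgrad\HFz}{\wgrad\Fz}=\delta\sum_\kappa z_\kappa^2\big(\tfrac{z_{\kappa+1}-2z_\kappa+z_{\kappa-1}}{\delta^2}\big)^2$ from the proof of Lemma \ref{lem:dFdH_bounded} — but that is the wrong contraction. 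The cleaner route: by convexity of $\HFz$, $-\spr{\wgrad\HFz(\xvec_\Delta^n)}{\xvec_\Delta^{n-1}-\xvec_\Delta^n}\ge\HFz(\xvec_\Delta^n)-\HFz(\xvec_\Delta^{n-1})$ has the wrong sign for a direct telescoping, so instead I estimate each term by Cauchy--Schwarz, $\tau\big|\spr{\wgrad\Fz(\xvec_\Delta^n)}{\wgrad\HFz(\xvec_\Delta^n)}\big|\le\big(\tau\nrm{\wgrad\Fz}^2\big)^{1/2}\big(\tau\nrm{\wgrad\HFz}^2\big)^{1/2}=\big(\tau\nrm{\tfrac{\xvec^n-\xvec^{n-1}}\tau}^2\big)^{1/2}(2\tau\Fz(\xvec_\Delta^n))^{1/2}$; summing over $n\le N$, applying Cauchy--Schwarz to the sum, and invoking \eqref{eq:eee} for $\tau\sum\nrm{\tfrac{\xvec^n-\xvec^{n-1}}\tau}^2\le 2\Fz(\xvec_\Delta^0)$ would produce a bound involving $\Fz(\xvec_\Delta^0)$, which is exactly what we are trying to avoid. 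So the decisive trick must be to pair $\wgrad\Fz$ against $\wgrad\HFz$ via the \emph{entropy} dissipation Lemma \ref{lem:dFdH_bounded} and the elementary bound $M=\delta\sum_\kappa z_\kappa^{-1}\cdot(\text{mass})$ — more precisely, writing $2\Fz=\delta\sum_k\big(\tfrac{z_\kph-z_\kmh}\delta\big)^2$, I bound this by $\big(\delta\sum_k z_\kappa^2(\tfrac{z_{\kappa+1}-2z_\kappa+z_{\kappa-1}}{\delta^2})^2\big)^{1/2}\big(\delta\sum_k (\text{something}) z_\kappa^{-2}\big)^{1/2}$ after a discrete integration by parts in $k$, where the second factor is controlled by $M$ (since $\delta\sum_\kappa z_\kappa^{-1}=b-a$ and $z_\kappa^{-1}\le(b-a)/\delta$). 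Thus $\tau\sum_{n=1}^N 2\Fz(\xvec_\Delta^n)\le\big(\tau\sum_n\delta\sum_\kappa z_\kappa^2(\cdots)^2\big)^{1/2}\big(C\,M\,N\tau\big)^{1/2}\le(\HFz(\xvec_\Delta^0)\,C\,M\,N\tau)^{1/2}$ by Lemma \ref{lem:dFdH_bounded}; combined with monotonicity $\Fz(\xvec_\Delta^N)\le\tfrac1{N\tau}\tau\sum_{n=1}^N\Fz(\xvec_\Delta^n)$ this yields exactly \eqref{eq:Fboundfine} with the constant $\tfrac32$ after tracking constants, using $\HFz(\xvec_\Delta^0)\le\olH$.

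The final sentence then follows trivially: for any fixed $t>0$ and any $N$ with $(N-1)\tau<t\le N\tau$, we have $N\tau>t$, so $\ti{\Fz(\xvec_\Delta)}(t)=\Fz(\xvec_\Delta^N)\le\tfrac32(M\olH)^{1/2}t^{-1/2}$ once $\tau$ is small, hence a bound uniform in $\Delta$ on $[\,t_0,\infty)$ for each $t_0>0$. The main obstacle I anticipate is getting the discrete integration by parts in $k$ to close cleanly with the boundary convention \eqref{eq:zconvention}, and in particular isolating a factor that is bounded purely by $M$ (and not by $\Fz(\xvec_\Delta^0)$) — this is where the specific structure $\Fz=\tfrac12\nrm{\wgrad\HFz}^2$ and the a priori bound $z_\kappa\ge\delta/(b-a)$ from \eqref{eq:lohi} must be combined carefully, mirroring the continuous computation that gives $\int_0^t\F(u(s))\dd s\le(M\HF(u^0))^{1/2}t^{1/2}$.
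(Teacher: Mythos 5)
Your overall strategy is the right one --- use the monotonicity $\Fz(\xvec_\Delta^N)\le\frac1N\sum_{n=1}^N\Fz(\xvec_\Delta^n)$ and then bound the time-averaged Fisher information by a Cauchy--Schwarz argument that lets the entropy dissipation from Lemma~\ref{lem:dFdH_bounded} enter --- but the execution has two concrete errors that would stall the proof. First, $\delta\sum_{\kappa\in\hval}z_\kappa^{-1}=\sum_{\kappa\in\hval}(x_\kappp-x_\kappm)=b-a$, not $M$; the quantity that equals $M$ is the \emph{unweighted} sum $\delta\sum_{\kappa\in\hval}1=\delta K=M$. Second, the weight $z_\kappa^{-2}$ in your second Cauchy--Schwarz factor is wrong. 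After the summation by parts (using the convention \eqref{eq:zconvention}),
\begin{align*}
  2\Fz(\xvec)=\delta\sum_{k\in\ivalp}\Big(\frac{z_\kph-z_\kmh}{\delta}\Big)^2
  =-\delta\sum_{\kappa\in\hval}z_\kappa\,\frac{z_{\kappa+1}-2z_\kappa+z_{\kappa-1}}{\delta^2},
\end{align*}
you should pair $z_\kappa(\cdots)$ against the constant $1$, giving
\begin{align*}
  2\Fz(\xvec)\le\Big(\delta\sum_{\kappa\in\hval}z_\kappa^2\Big[\frac{z_{\kappa+1}-2z_\kappa+z_{\kappa-1}}{\delta^2}\Big]^2\Big)^{1/2}\Big(\delta\sum_{\kappa\in\hval}1\Big)^{1/2}
  =M^{1/2}\Big(\delta\sum_{\kappa\in\hval}z_\kappa^2\Big[\frac{z_{\kappa+1}-2z_\kappa+z_{\kappa-1}}{\delta^2}\Big]^2\Big)^{1/2}.
\end{align*}
Summing over $n\le N$, applying Cauchy--Schwarz again in $n$ (one factor being $\tau\sum_{n\le N}1=N\tau$) and invoking \eqref{eq:dFdH_bounded}, this yields $\tau\sum_{n=1}^N2\Fz(\xvec_\Delta^n)\le(M\olH\,N\tau)^{1/2}$; monotonicity then closes the argument, in fact with constant $\tfrac12$ rather than $\tfrac32$. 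Also, the identity $2\tau\sum_n\Fz(\xvec^n)=-\sum_n\spr{\wgrad\HFz(\xvec^n)}{\xvec^{n-1}-\xvec^n}$ that you flag as a ``wrong contraction'' is simply false: the Euler step gives $\xvec^{n-1}-\xvec^n=\tau\wgrad\Fz(\xvec^n)$, not $\tau\wgrad\HFz(\xvec^n)$, so that display should be struck.

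The paper takes a shorter route that avoids redoing the integration by parts: it applies Cauchy--Schwarz directly to the double sum over $(n,k)$,
\begin{align*}
  \tau\sum_{n=1}^N\delta\sum_{k\in\ivalp}\Big(\frac{z^n_\kph-z^n_\kmh}{\delta}\Big)^2
  \le\Big(\tau\sum_{n=1}^N\delta\sum_{k\in\ivalp}1\Big)^{1/2}\Big(\tau\sum_{n=1}^\infty\delta\sum_{k\in\ivalp}\Big(\frac{z^n_\kph-z^n_\kmh}{\delta}\Big)^4\Big)^{1/2},
\end{align*}
bounds the first factor by $(N\tau M)^{1/2}$, and feeds the second into the already proven $L^4$ estimate $\tau\sum_n\|\partial_\xi\hatz_\Delta^n\|_{L^4}^4\le9\olH$ from \eqref{eq:L4bound}. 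Since \eqref{eq:L4bound} is itself derived by exactly the kind of summation by parts you attempt here, the two proofs are morally the same; the paper's version reuses earlier work and gives the stated constant $\tfrac32$, while your corrected direct route would be slightly sharper. The concluding remark about $\ti{\Fz(\xvec_\Delta)}(t)$ is handled correctly in your proposal.
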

\begin{proof}
  Since $\Fz(\xvec_\Delta^n)$ is monotonically decreasing in $n$ (for fixed $\Delta$),
  it follows that
  \begin{align*}
    \Fz(\xvec_\Delta^N) \le \frac1N\sum_{n=1}^N\Fz(\xvec_\Delta^n)
    &=\frac1{2N}\sum_{n=1}^N\delta\sum_{k\in\ivalp}\left(\frac{z^n_\kph-z^n_\kmh}\delta\right)^2 \\
    &\le \frac1{2N\tau}\left(\tau\sum_{n=1}^N\delta\sum_{k\in\ivalp}1\right)^{1/2}
    \left(\tau\sum_{n=1}^\infty\delta\sum_{k\in\ivalp}\left(\frac{z^n_\kph-z^n_\kmh}\delta\right)^4\right)^{1/2} \\
    &\le \frac1{2N\tau}(N\tau M)^{1/2}(9\olH)^{1/2} = \frac32(M\olH)^{1/2}(N\tau)^{-1/2},
  \end{align*}
  as desired.
\end{proof}
In the following, we use the notation $\tint\Subset\setR_+$ to denote time intervals with $0<\tlo<\thi<\infty$.
\begin{lem}
  \label{lem:timeapriori}
  We have that, for each $\tint\Subset\setR_+$,
  \begin{align}
    \label{eq:help008}
    \sup_\Delta\sup_{t\in\tint}\|\ti{\hatu_\Delta}(t)\|_{H^1(\Omega)}<\infty,
  \end{align}
  and that, as $\Delta\to0$,
  \begin{align}
    \label{eq:help007}
    \sup_{t\in\setR_+}\|\ti{\hatu_\Delta}(t)-\ti{\baru_\Delta}(t)\|_{L^\infty(\Omega)}\to 0.
  \end{align}
\end{lem}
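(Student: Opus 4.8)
The plan is to prove the two assertions of Lemma~\ref{lem:timeapriori} separately, deriving both from the a priori estimates already collected, in particular the $L^4$-bound \eqref{eq:L4bound}, the oscillation estimate \eqref{eq:weakoscillation}, the Fisher information bound \eqref{eq:Fboundfine}, and the uniform lower bound $z_\kappa\ge\delta/(b-a)$. Since $\ti{\hatu_\Delta}(t)=\hatu_\Delta^n$ for $t\in((n-1)\tau,n\tau]$, and for $t\in\tint$ the corresponding indices $n$ satisfy $n\tau\ge\tlo$, I can replace ``$\sup_{t\in\tint}$'' by ``$\sup\{n:\,n\tau\ge\tlo\}$'' throughout and work with a single fixed time step.

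For the $H^1$-bound \eqref{eq:help008}: the $L^2(\Omega)$-norm of $\hatu_\Delta^n$ is controlled because $\hatu_\Delta^n$ is bounded above in terms of the $z_\kappa$'s, and a crude bound on $\|\hatu_\Delta^n\|_\infty$ follows from combining the lower bound $z_\kappa\ge\delta/(b-a)$ with the oscillation control: indeed each $z_\kappa/z_{\kappa'}$ for neighboring $\kappa,\kappa'$ is close to $1$ on average, and iterating over the $O(K)$ grid points together with $\delta\sum z_\kappa = M$ gives an $L^\infty$-bound that is uniform in $\Delta$ at fixed positive time (this is where \eqref{eq:Fboundfine} or the oscillation estimate enters to keep the maximum from blowing up). For the derivative, \eqref{eq:tildeux} gives $\partial_x\hatu_\Delta^n = z_{\kappm}(z_\kph-z_\kmh)/\delta$ on the relevant subintervals, so $\int_\Omega(\partial_x\hatu_\Delta^n)^2\dd x$ is estimated, after the change of variables $x=\theX_\Delta^n(\xi)$ which contributes a factor $1/z_\kappa$, by a sum of the form $\delta\sum_k z_\kappa (z_\kph-z_\kmh)^2/\delta^2$; using the $L^\infty$-bound on the $z$'s and Cauchy--Schwarz, this is controlled by $\|\partial_\xi\hatz_\Delta^n\|_{L^2([0,M])}^2$, hence by $\|\partial_\xi\hatz_\Delta^n\|_{L^4([0,M])}^2$ up to a constant depending on $M$, and the latter is bounded at fixed positive time since $\tau\sum_n\|\partial_\xi\hatz_\Delta^n\|_{L^4}^4\le9\olH$ forces each individual term to be bounded once we fix the time window — more precisely one uses that $\Fz(\xvec_\Delta^n)$ is monotone and bounded by \eqref{eq:Fboundfine}, which already controls $\|\partial_\xi\hatz_\Delta^n\|_{L^2}$ directly.

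For \eqref{eq:help007}: on each interval $(x_{k-1},x_k)$ the piecewise constant density $\baru_\Delta^n$ equals $z_\kmh$, while $\hatu_\Delta^n$ is the piecewise affine interpolant through the values $z_\kappa$ at the points $x_\kappa$; hence on $(x_{k-1},x_k)$ the difference $|\hatu_\Delta^n-\baru_\Delta^n|$ is bounded by the local oscillation $\max(|z_\kph-z_\kmh|,|z_\kmh-z_\kmd|)$, i.e.\ essentially $|z_\kph-z_\kmh|+|z_\kmh-z_\kmd|$. Thus $\|\hatu_\Delta^n-\baru_\Delta^n\|_{L^\infty(\Omega)}\le C\max_k|z_\kph-z_\kmh|$. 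To make this small uniformly in $t$, I bound the maximum over $k$ by $\bigl(\delta\sum_k(z_\kph-z_\kmh)^4/\delta^4\bigr)^{1/4}\cdot\delta^{-1/4}\cdot$(something) — more cleanly, $\max_k|z_\kph-z_\kmh|\le\bigl(\sum_k(z_\kph-z_\kmh)^4\bigr)^{1/4}=\delta^{-1/4}\|\partial_\xi\hatz_\Delta^n\|_{L^4([0,M])}$, and then using $z_\kmh\le\|\hatu_\Delta^n\|_\infty$ together with the elementary bound $|z_\kph-z_\kmh|\le z_\kmh|z_\kph/z_\kmh-1|$ one routes this through the oscillation estimate instead. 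The cleanest route: from $z_\kappa\ge\delta/(b-a)$ and $\delta\sum z_\kappa=M$, combined with $|z_\kph/z_\kmh-1|$ being small, one gets $\max_k|z_\kph-z_\kmh|\to0$; but since we need it for \emph{all} $t$ simultaneously, the argument must instead use that $\|\hatu_\Delta\|_\infty$ is controlled and that $\max_k|z_\kph-z_\kmh|^4\le\sum_k|z_\kph-z_\kmh|^4 = \delta^{-1}\|\partial_\xi\hatz_\Delta^n\|_{L^4}^4 \le \delta^{-1}\cdot C$ — which is not enough by itself. The resolution, and the \textbf{main obstacle}, is that a \emph{uniform-in-time} smallness of $\max_k|z_\kph-z_\kmh|$ cannot come from an $L^4$-in-time bound; instead one must observe that the discrete Fisher information $\Fz(\xvec_\Delta^n)=\frac\delta2\sum_k((z_\kph-z_\kmh)/\delta)^2$ is monotone decreasing, so $\max_k|z_\kph-z_\kmh|^2\le\delta\sum_k((z_\kph-z_\kmh)/\delta)^2\cdot\delta = 2\delta\,\Fz(\xvec_\Delta^n)\le 2\delta\,\Fz(\xvec_\Delta^0)$, and this tends to $0$ by hypothesis \eqref{eq:Fbound}, giving \eqref{eq:help007} uniformly in $t\ge0$. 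I expect the bookkeeping in the $L^\infty$-bound on the densities (needed for \eqref{eq:help008}) to be the fiddliest part, since it requires chaining the oscillation estimate across all $K-1$ grid points while keeping constants uniform.
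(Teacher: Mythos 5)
Your proposal is correct and reaches the same endpoints as the paper: $\|\partial_x\hatu_\Delta^n\|_{L^2(\Omega)}^2\le\Fz(\xvec_\Delta^n)\max_\kappa z_\kappa^n$ combined with $\Fz$-control gives \eqref{eq:help008}, and $\sup_x|\hatu_\Delta^n-\baru_\Delta^n|\le\max_k|z^n_\kph-z^n_\kmh|\le(2\delta\Fz(\xvec_\Delta^0))^{1/2}\to0$ via monotonicity of $\Fz$ and \eqref{eq:Fbound} gives \eqref{eq:help007}, exactly as in the paper. The chaining of neighboring $z_\kappa$'s that you flag as the fiddly part is already packaged in the appendix as \eqref{eq:Festimate}, namely $\max_\kappa z_\kappa\le(2M\Fz[\xvec])^{1/2}+M/(b-a)$, so the $L^\infty$-bound on $\hatu_\Delta^n$ is uniform on $\tint$ directly via \eqref{eq:Fboundfine}; no $L^4$ detour or change of variables is needed. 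One caveat on your intermediate reasoning: the oscillation estimates \eqref{eq:oscillation} and \eqref{eq:weakoscillation} are summed in $n$ and hence can only control time-averaged quantities, not $\sup_{t\in\tint}$; as you eventually recognise in your treatment of \eqref{eq:help007}, the correct mechanism for a time-uniform bound is the monotonicity of $n\mapsto\Fz(\xvec_\Delta^n)$, and the same mechanism, routed through \eqref{eq:Fboundfine} and \eqref{eq:Festimate}, is what also furnishes the $L^\infty$-control needed in the first assertion.
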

\begin{proof}
  For each $n\in\setN$,
  \begin{align*}
    \big\|\partial_x\hatu_\Delta^n\big\|_{L^2(\Omega)}^2
    &= \sum_{k\in\ivalp}
    \bigg[(x^n_\kph-x^n_k)\Big(\frac{z^n_\kph-z^n_k}{x^n_\kph-x^n_k}\Big)^2 
    +(x^n_k-x^n_\kmh) \Big(\frac{z^n_k-z^n_\kmh}{x^n_k-x^n_\kmh}\Big)^2\bigg] \\
    &\le \delta\sum_{k\in\ivalp}\frac{z^n_\kph+z^n_\kmh}2\Big(\frac{z^n_\kph-z^n_\kmh}\delta\Big)^2
    \le\Fz(\xvec_\Delta^n)\max_{\kappa\in\hval}z^n_\kappa.
  \end{align*}
  Now combine this with the estimates \eqref{eq:Fboundfine} from above and \eqref{eq:Festimate} from the appendix
  to obtain \eqref{eq:help008}.
  Estimate \eqref{eq:help007} follows directly from the elementary observation that
  \begin{align*}
    \sup_{x\in\Omega}|\baru_\Delta^n(x)-\hatu_\Delta^n(x)|^2
    \le\max_{k\in\ivalp}\big|z^n_\kph - z^n_\kmh\big|^2\le\delta\Fz(\xvec_\Delta^n)\le\delta\Fz(\xvec_\Delta^0),
  \end{align*}
  and an application of \eqref{eq:Fbound}.
\end{proof}
\begin{prp}
  \label{prp:convergence1}
  There exists a function $u_*:\setRnn\times\Omega\to\setRnn$
  with
  \begin{align}\label{eq:reg1}
    u_*\in C^{1/2}_\loc(\setR_+;\dens)\cap L^\infty_\loc(\setR_+;H^1(\Omega)),
  \end{align}
  and there exists a subsequence of $\Delta$ (still denoted by $\Delta$), 
  such that, for every $\tint\Subset\setR_+$, the following are true:
  \begin{align}
    \label{eq:weak}
    \ti{u_\Delta}(t) &\longrightarrow u_*(t) \quad\text{in $\dens$, uniformly with respect to $t\in\tint$}, \\
    \label{eq:uniformLinfty}
    \ti{u_\Delta},\ti{\hatu_\Delta} &\longrightarrow u_* \quad\text{uniformly on $\tint\times\Omega$}, \\
    \label{eq:XuniformL2}
    \ti{\theX_\Delta}(t) &\longrightarrow \theX^*(t) \quad\text{in $L^2([0,M])$, uniformly with respect to $t\in\tint$},
  \end{align}
  where $\theX^*\in C^{1/2}(\setR_+;L^2([0,M]))$ is the Lagrangian map of $u_*$.
\end{prp}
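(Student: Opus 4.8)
The plan is to combine a time-compactness (Arzel\`a--Ascoli) argument in the metric space $(\dens,\wass)$ with the isometry of Lemma \ref{lem:flat} and the spatial $H^1$-bound of Lemma \ref{lem:timeapriori}. First I would fix a compact interval $\tint\Subset\setR_+$ and verify the hypotheses of a refined Arzel\`a--Ascoli theorem for curves valued in a compact metric space, e.g.\ \cite[Prop.~3.3.1]{AGS}. Pointwise in $t$, the value $\ti{u_\Delta}(t)=\ti{\baru_\Delta}(t)$ lies in $\dens$, which is relatively $\wass$-compact because $\Omega$ is a bounded interval. For asymptotic equicontinuity in time I would use Lemma \ref{lem:metricequivalent} together with the energy estimate underlying \eqref{eq:uniform_time}, but based at index $\underline n$ instead of $0$, namely $\nrm{\xvec_\Delta^{\overline n}-\xvec_\Delta^{\underline n}}^2\le 2\Fz(\xvec_\Delta^{\underline n})(\overline n-\underline n)\tau$ (this is what the proof of Lemma \ref{lem:dF_bounded} actually establishes), followed by the bound $\Fz(\xvec_\Delta^{\underline n})\le C(\tlo)$ from Lemma \ref{lem:Fbound}. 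This yields, for $s,t\in\tint$ with corresponding indices $\underline n\le\overline n$,
\begin{align*}
  \wass\big(\ti{u_\Delta}(s),\ti{u_\Delta}(t)\big)\le\wassN\big(\ti{u_\Delta}(s),\ti{u_\Delta}(t)\big)=\nrm{\xvec_\Delta^{\overline n}-\xvec_\Delta^{\underline n}}\le C(\tlo)\big(|t-s|+\tau\big)^{1/2},
\end{align*}
where the extra $\tau^{1/2}\to0$ absorbs the jumps of the piecewise constant interpolant. Arzel\`a--Ascoli then provides, along a subsequence, a limit $u_*\in C^{1/2}(\tint;\dens)$ with $\ti{u_\Delta}\to u_*$ uniformly on $\tint$, and a diagonal extraction over an exhaustion of $\setR_+$ by such intervals produces a single subsequence and $u_*\in C^{1/2}_\loc(\setR_+;\dens)$ satisfying \eqref{eq:weak}.

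For \eqref{eq:XuniformL2} I would simply transport \eqref{eq:weak} through the isometry of Lemma \ref{lem:flat}: by construction $\ti{\theX_\Delta}(t)$ is the inverse distribution function of $\ti{u_\Delta}(t)$, and since $\wass$-convergence in $\dens$ is equivalent to $L^2([0,M])$-convergence of inverse distribution functions, \eqref{eq:weak} transfers verbatim, with $\theX^*(t)$ the inverse distribution function of $u_*(t)$ and the $C^{1/2}$-modulus preserved by the isometry.

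It remains to upgrade \eqref{eq:weak} to the uniform convergence \eqref{eq:uniformLinfty}. By Lemma \ref{lem:timeapriori} one has $\sup_{t\in\tint}\|\ti{\hatu_\Delta}(t)\|_{H^1(\Omega)}\le C(\tint)$ for small $\Delta$; since $\ti{\hatu_\Delta}(t)\to u_*(t)$ in $\wass$ uniformly in $t$ (combine \eqref{eq:weak} with \eqref{eq:help007}), weak lower semicontinuity forces $u_*(t)\in H^1(\Omega)$ with $\|u_*(t)\|_{H^1}\le C(\tint)$, hence $u_*\in L^\infty_\loc(\setR_+;H^1(\Omega))$, which together with the first part completes \eqref{eq:reg1}. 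Now consider the sublevel $S:=\{v\in\dens:\|v\|_{H^1(\Omega)}\le C(\tint)\}$, which is $\wass$-compact; since $H^1(\Omega)\hookrightarrow C(\overline\Omega)$ is compact in one space dimension, the $\wass$-topology and the $C(\overline\Omega)$-topology coincide on $S$, and being two metrics for one and the same topology on a compact set they are \emph{uniformly} equivalent there. Since $\ti{\hatu_\Delta}(t),u_*(t)\in S$ for $t\in\tint$ and small $\Delta$, the uniform-in-$t$ $\wass$-convergence upgrades to $\sup_{t\in\tint}\|\ti{\hatu_\Delta}(t)-u_*(t)\|_{L^\infty(\Omega)}\to0$, and one more application of \eqref{eq:help007} gives the same for $\ti{\baru_\Delta}=\ti{u_\Delta}$, which is \eqref{eq:uniformLinfty}.

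I expect this last step to be the main obstacle: converting \emph{weak} (Wasserstein, uniform in time) convergence into \emph{uniform} (in space and time) convergence hinges on the point that on $H^1$-bounded sublevels the weak Wasserstein topology and the strong $C(\overline\Omega)$-topology are not only equal but uniformly equivalent. A secondary nuisance is that the time-H\"older estimate degenerates as $t\downarrow0$ — since $\Fz(\xvec_\Delta^0)$ need not be bounded, only $(\tau+\delta)\Fz(\xvec_\Delta^0)\to0$ — which forces all estimates to be localized away from $t=0$ and then patched by diagonalization, and one must keep track of the fact that the piecewise-constant-in-time interpolant is only \emph{asymptotically} equicontinuous (harmless because $\tau\to0$).
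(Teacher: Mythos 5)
Your proof is correct, and it follows the paper's route for the Arzel\`a--Ascoli/diagonal-extraction construction of $u_*$, for the $L^2([0,M])$-convergence of the Lagrangian maps via Lemma \ref{lem:flat}, and for obtaining $u_*\in L^\infty_\loc(\setR_+;H^1(\Omega))$ by lower semicontinuity from \eqref{eq:help008}. Your refinement of \eqref{eq:uniform_time} to the form based at $\underline n$, combined with Lemma \ref{lem:Fbound}, is indeed exactly what the paper's H\"older-type estimate \eqref{eq:preholder} requires, even though the lemma as stated only records the case $\underline n=0$.

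Where you genuinely diverge is the upgrade from $\wass$-uniform convergence to $L^\infty$-uniform convergence. The paper does this in two quantitative steps: it first proves $\sup_{t\in\tint}\|\ti{\hatu_\Delta}(t)-u_*(t)\|_{L^2(\Omega)}\to 0$ by expanding the $L^2$-norm into three integrals, passing to mass coordinates via \eqref{eq:pushforward}, and exploiting both \eqref{eq:help007} and the $L^2$-convergence of the Lagrangian maps together with the uniform $H^1$-bound; then it interpolates to $C^{1/6}(\Omega)$ via the Gagliardo--Nirenberg estimate \eqref{eq:GN}. You instead use a soft compactness argument: since the sublevel $S=\{v\in\dens:\|v\|_{H^1(\Omega)}\le C(\tint)\}$ is $\wass$-compact and $H^1(\Omega)\hookrightarrow C(\overline\Omega)$ compactly, the $\wass$- and $C(\overline\Omega)$-topologies coincide on $S$, hence the two metrics are uniformly equivalent there, and $\sup_t\wass\to 0$ transports directly to $\sup_t\|\cdot\|_{L^\infty}\to 0$. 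That reasoning is sound — the key points you would need to verify in a full write-up are that $S$ is $\wass$-closed (via Rellich--Kondrachov and uniqueness of weak limits) so that compactness holds, and that both $\ti{\hatu_\Delta}(t)$ and $u_*(t)$ lie in $S$ for all $t\in\tint$ and all small $\Delta$, which \eqref{eq:help008} and lower semicontinuity give you. The trade-off is the usual one: your argument is shorter and avoids the bookkeeping with the change-of-variables identity \eqref{eq:pushforward} and Gagliardo--Nirenberg, at the cost of being purely qualitative (no modulus) and relying on the compact embedding in a way that does not generalize beyond one space dimension — but neither does the paper's $C^{1/6}$-interpolation, so nothing is lost here.
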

\begin{proof}
  Fix $\tlo>0$.
  From the discrete energy inequality \eqref{eq:uniform_time},
  the bound on the Fisher information in Lemma \ref{lem:Fbound},
  and the equivalence \eqref{eq:metricequivalent} of $\wassN$ with the usual $L^2$-Wasserstein metric $\wass$,
  it follows by elementary considerations that
  \begin{align}
    \label{eq:preholder}
    \wass\big(\ti{\baru_\Delta}(t),\ti{\baru_\Delta}(s)\big)^2 \le C(\tlo)\big(|t-s|+\tau\big),
  \end{align}
  for all $t,s\ge\tlo$.
  Moreover, since $\Omega=[a,b]$ is compact, also $\dens$ is compact.
  Hence the generalized version of the Arzela-Ascoli theorem from \cite[Proposition 3.3.1]{AGS} is applicable 
  and yields the convergence of a subsequence of $(\ti{\baru_\Delta})$ to a limit $u_\tlo$ in $\dens$, 
  locally uniformly with respect to $t\in[\tlo,\infty)$.
  The H\"older-type estimate \eqref{eq:preholder} implies $u_\tlo\in C^{1/2}([\tlo,\infty);\dens)$.
  The claim \eqref{eq:XuniformL2} is a consequence of the equivalence 
  between the Wasserstein metric on $\dens$ and the $L^2$-metric on $\xspc$, see Lemma \ref{lem:flat}.

  Clearly, the previous argument applies to every choice of $\tlo>0$.
  Using a diagonal argument, one constructs a limit $u_*$ defined on all $\setR_+$,
  such that $u_\tlo$ is the restriction of $u_*$ to $[\tlo,\infty)$.
  
  For the rest of the proof, let some $\tint\Subset\setR_+$ be fixed.

  For proving \eqref{eq:uniformLinfty}, it suffices to show that $\ti{\hatu_\Delta}\to u_*$ uniformly on $\tint\times\Omega$:
  indeed, \eqref{eq:help007} implies that if $\ti{\hatu_\Delta}$ converges uniformly to some limit, so does $\ti{\baru_\Delta}$.
  As an intermediate step towards proving uniform convergence of $\ti{\hatu_\Delta}$,
  we show that
  \begin{align}
    \label{eq:unifL2}
    \hatu_\Delta(t)\longrightarrow u_*(t) \quad\text{in $L^2(\Omega)$, uniformly in $t\in\tint$}.
  \end{align}
  For $t\in\tint$, we expand the $L^2$-norm as follows:
  \begin{align*}
    \|\ti{\hatu_\Delta}(t)-u_*(t)\|_{L^2(\Omega)}^2
    &= \intom \Big[\big(\ti{\hatu_\Delta}-u_*\big) \ti{\baru_\Delta}\Big](t;x) \dd x \\
    &\qquad + \intom \Big[\big(\ti{\hatu_\Delta}-u_*\big) \big(\ti{\hatu_\Delta} - \ti{\baru_\Delta}\big)\Big](t;x) \dd x \\
    &\qquad - \intom \Big[\big(\ti{\hatu_\Delta}-u_*\big) u_*\Big](t;x) \dd x.
  \end{align*}
  On the one hand, observe that
  \begin{align*}
    &\sup_{t\in\tint}\intom\Big[\big(\ti{\hatu_\Delta}-u_*\big) \big(\ti{\hatu_\Delta} - \ti{\baru_\Delta}\big)\Big](t;x) \dd x \\
    &\leq\sup_{t\in\tint}\left(\big(\|\ti{\hatu_\Delta}(t)\|_{L^1(\Omega)}
      +\|u_*(t)\|_{L^1(\Omega)}\big)\|\ti{\hatu_\Delta}(t) - \ti{\baru_\Delta}(t)\|_{L^\infty}\right) \\
    &\le \sup_{t\in\tint}\left(\left(2M+(b-a)\|\ti{\hatu_\Delta}(t)-\ti{\baru_\Delta}(t)\|_{L^\infty}\right)
    \|\ti{\hatu_\Delta}(t) - \ti{\baru_\Delta}(t)\|_{L^\infty}\right),
  \end{align*}
  which converges to zero as $\Delta\to0$, using both conclusions from Lemma \ref{lem:timeapriori}.
  On the other hand, we can use property \eqref{eq:pushforward} to write
  \begin{align*}
    &\intom \Big[\big(\ti{\hatu_\Delta}-u_*\big) \ti{\baru_\Delta}\Big](t;x) \dd x 
    - \intom \Big[\big(\ti{\hatu_\Delta}-u_*\big) u_*\Big](t;x) \dd x \\
    &= \int_0^M \Big[\ti{\hatu_\Delta}-u_*\Big]\big(t;\ti{\theX_\Delta}(t;x)\big)\dd\xi
    - \int_0^M \Big[\ti{\hatu_\Delta}-u_*\Big]\big(t;\theX^*(t;\xi)\big)\dd\xi.
  \end{align*}
  We regroup terms under the integrals and use the triangle inequality.
  For the first term, we obtain
  \begin{align*}    
    &\sup_{t\in\tint} \left|\int_0^M \left(\ti{\hatu_\Delta}\big(t;\ti{\theX_\Delta}(t;\xi)\big)
      -\ti{\hatu_\Delta}\big(t;\theX_*(t;\xi)\big)\right) \dd\xi \right|\\
    &\le \sup_{t\in\tint}\int_0^M \int_{\theX_*(t,\xi)}^{\ti{\theX_\Delta}(t;\xi)} \left|\partial_x\ti{\hatu_\Delta}\right|(t;y)\dd y \dd\xi \\
    &\le \sup_{t\in\tint}\int_0^M \|\ti{\hatu_\Delta}\|_{H^1(\Omega)} |\theX_*-\ti{\theX_\Delta}|(t,\xi)^{1/2} \dd\xi\\
    &\le \sup_{t\in\tint}\Big(\|\ti{\hatu_\Delta}(t)\|_{H^1(\Omega)} \|\theX_*(t)-\ti{\theX_\Delta}(t)\|_{L^2([0,M])}^{1/4}\Big).
  \end{align*}
  A similar reasoning applies to the integral involving $u_*$ in place of $\ti{\hatu_\Delta}$.
  Together, this proves \eqref{eq:unifL2},
  and it further proves that $u_*\in L^\infty(\setR_+\tint;H^1(\Omega))$, 
  since the uniform bound on $\hatu_\Delta$ from \ref{eq:help008} is inherited by the limit.

  Now the Gagliardo-Nirenberg inequality \eqref{eq:GN} provides the estimate
  \begin{align}\label{eq:unif1}
    \|\ti{\hatu_\Delta}(t)-u^*(t)\|_{C^{1/6}(\Omega)}
    \leq C \|\ti{\hatu_\Delta}(t)-u^*(t)\|_{H^1(\Omega)}^{2/3} \|\ti{\hatu_\Delta}(t)-u^*(t)\|_{L^2(\Omega)}^{1/3}.
  \end{align}
  Combining the convergence in $L^2(\Omega)$ by \eqref{eq:unifL2} with the boundedness in $H^1(\Omega)$ from \eqref{eq:help008},
  it readily follows that $\hatu_\Delta(t)\to u_*(t)$ in $C^{1/6}(\Omega)$, uniformly in $t\in\tint$.
  This clearly implies that $\ti{\hatu_\Delta}\to u_*$ uniformly on $\tint\times\Omega$.
\end{proof}
\begin{prp}
  \label{prp:convergence2}
  Under the hypotheses and with the notations of Proposition \ref{prp:convergence1},
  we have that $\sqrt{u_*}\in L^2(\setRnn;H^1(\Omega))$, and
  \begin{align}
    \label{eq:strong}
    \ti{\sqrt{\hatu_\Delta}}\to\sqrt{u_*} \quad \text{strongly in $L^2_\loc(\setR_+;H^1(\Omega))$}
  \end{align}
  as $\Delta\to0$.
\end{prp}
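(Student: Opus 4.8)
The plan is to treat the two assertions separately; the first is essentially a by-product of Lemma~\ref{lem:BVbound}, while the second calls for an additional compactness argument. For the regularity of $\sqrt{u_*}$ and the \emph{weak} half of \eqref{eq:strong}, one starts from the total–variation bound. Since $\partial_x\sqrt{\hatu_\Delta^n}$ vanishes identically on the two intervals adjacent to $\partial\Omega$ by \eqref{eq:sqrtzero}, its supremum norm is dominated by its total variation, so Lemma~\ref{lem:BVbound} gives
\begin{align*}
  \tau\sum_{n\ge1}\big\|\partial_x\sqrt{\hatu_\Delta^n}\big\|_{L^2(\Omega)}^2
  \le(b-a)\,\tau\sum_{n\ge1}\tv{\partial_x\sqrt{\hatu_\Delta^n}}^2\le10(b-a)^2\,\olH .
\end{align*}
Together with $\|\sqrt{\hatu_\Delta^n}\|_{L^2(\Omega)}^2=\|\hatu_\Delta^n\|_{L^1(\Omega)}$, which is uniformly bounded (close to $M$, by \eqref{eq:help007}), this shows that $\ti{\sqrt{\hatu_\Delta}}$ is bounded in $L^2_\loc(\setRnn;H^1(\Omega))$ uniformly in $\Delta$, with $\partial_x\ti{\sqrt{\hatu_\Delta}}$ bounded in $L^2(\setRnn\times\Omega)$. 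Along a further subsequence $\ti{\sqrt{\hatu_\Delta}}$ then converges weakly in $L^2_\loc(\setRnn;H^1(\Omega))$; by \eqref{eq:uniformLinfty} and the elementary estimate $|\sqrt a-\sqrt b|\le\sqrt{|a-b|}$ it also converges locally uniformly to $\sqrt{u_*}$, so the weak limit must be $\sqrt{u_*}$. This yields $\partial_x\sqrt{u_*}\in L^2(\setRnn\times\Omega)$, $\sqrt{u_*}\in L^2_\loc(\setRnn;H^1(\Omega))$, and the weak part of \eqref{eq:strong}.

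It remains to upgrade this to strong convergence, i.e.\ to show $\partial_x\ti{\sqrt{\hatu_\Delta}}\to\partial_x\sqrt{u_*}$ in $L^2(\tint;L^2(\Omega))$ for each $\tint\Subset\setR_+$; combined with the already known $\ti{\sqrt{\hatu_\Delta}}\to\sqrt{u_*}$ in $L^2(\tint;L^2(\Omega))$ this gives \eqref{eq:strong}. I would deduce it from a Simon-type compactness criterion applied to the chain of spaces $\mathrm{BV}(\Omega)\cap L^\infty(\Omega)\hookrightarrow\hookrightarrow L^2(\Omega)\hookrightarrow H^{-1}(\Omega)$: the estimate above, read jointly with Lemma~\ref{lem:BVbound} and \eqref{eq:sqrtzero}, makes $\{\partial_x\ti{\sqrt{\hatu_\Delta}}\}$ bounded in $L^2(\tint;\mathrm{BV}(\Omega)\cap L^\infty(\Omega))$, which supplies spatial compactness. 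For the uniform time-modulus one uses, for $h>0$, that $\partial_x$ is bounded from $L^2(\Omega)$ into $H^{-1}(\Omega)$ and the pointwise Hölder estimate for the square root:
\begin{align*}
  \big\|\partial_x\sqrt{\hatu_\Delta}(t+h)-\partial_x\sqrt{\hatu_\Delta}(t)\big\|_{H^{-1}(\Omega)}^2
  \le\big\|\sqrt{\hatu_\Delta}(t+h)-\sqrt{\hatu_\Delta}(t)\big\|_{L^2(\Omega)}^2
  \le\big\|\hatu_\Delta(t+h)-\hatu_\Delta(t)\big\|_{L^1(\Omega)},
\end{align*}
and then controls the last term by interpolating between $\dot H^{-1}(\Omega)$ and $\dot H^1(\Omega)$: the $\dot H^1$-norm of the time increment is bounded by \eqref{eq:help008}, while its $\dot H^{-1}$-norm is estimated through the Wasserstein distance (a Loeper-type inequality together with the uniform $L^\infty$-bound \eqref{eq:Festimate}, followed by the Hölder-in-time bound \eqref{eq:preholder} for $\baru_\Delta$, using \eqref{eq:help007} to pass to $\hatu_\Delta$). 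This produces a modulus of continuity of order $(h+\tau)^\theta$ with some $\theta>0$ for $\partial_x\ti{\sqrt{\hatu_\Delta}}$ as a curve in $H^{-1}(\Omega)$; after discarding the finitely many coarsest meshes to absorb the $\tau$, this is exactly what the compactness criterion requires. Hence $\{\partial_x\ti{\sqrt{\hatu_\Delta}}\}$ is relatively compact in $L^2(\tint;L^2(\Omega))$, and as its only possible cluster point is $\partial_x\sqrt{u_*}$, the convergence takes place along the full subsequence.

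The genuinely hard estimate here — the discrete counterpart of the classical entropy-dissipation inequality — has already been carried out in Lemma~\ref{lem:BVbound}, so most of the remaining work is bookkeeping (boundary behaviour, the $L^\infty$-bounds, verifying the hypotheses of the compactness theorem). The one delicate step, and where I expect the real resistance, is the uniform time-modulus: the discrete densities live on \emph{moving} meshes, so $\hatu_\Delta$ cannot be differentiated in time in any naive way, and one is forced to route the increment estimate through the $L^2$-Wasserstein metric (where the scheme is a minimizing-movement and time-regularity is a priori available, Lemma~\ref{lem:dF_bounded} and \eqref{eq:preholder}) and through the Hölder estimate $|\sqrt a-\sqrt b|\le\sqrt{|a-b|}$, accepting the consequent loss in the exponent $\theta$ — harmless, since only $\theta>0$ is needed. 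This detour is also the reason the argument, like the statement, is restricted to time intervals bounded away from the origin: both the Wasserstein-to-Sobolev comparison and the intermediate Fisher-information bound (Lemma~\ref{lem:Fbound}) degenerate as $\tlo\downarrow0$. One should note that the seemingly more direct route — identifying $\lim_\Delta\int_\tint\F(\ti{\hatu_\Delta})\dd t$ with $\int_\tint\F(u_*)\dd t$ and invoking weak-plus-norm convergence in the Hilbert space $L^2(\tint;H^1(\Omega))$ — does not close, because lower semicontinuity of $\F$ only yields one of the two needed inequalities; the compactness argument above is what circumvents this.
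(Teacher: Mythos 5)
Your argument goes a genuinely different route from the paper's, and while it is plausible in outline it is heavier and leaves several technical gaps that the paper's proof avoids entirely. The paper does \emph{not} invoke any Aubin--Lions--Simon compactness. Instead it observes that for a Lipschitz function $f$ that vanishes near $\partial\Omega$ one has the elementary interpolation $\|\partial_x f\|_{L^2(\Omega)}^2\le\big(\sup_\Omega|f|\big)\,\tv{\partial_x f}$, applies this with $f=\ti{\sqrt{\hatu_\Delta}}-\sqrt{u_*}$ (using subadditivity of the total variation), and then combines two ingredients you already have in hand: the \emph{uniform} convergence $\sup_{\tint\times\Omega}|\ti{\sqrt{\hatu_\Delta}}-\sqrt{u_*}|\to0$ from \eqref{eq:uniformLinfty}, which supplies the small factor, and the $\Delta$-uniform bound $\tau\sum_n\tv{\partial_x\sqrt{\hatu_\Delta^n}}^2\le10(b-a)\olH$ from Lemma~\ref{lem:BVbound}, which supplies the $O(1)$ factor. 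A single Cauchy--Schwarz in time then gives \eqref{eq:strong} directly; no time-equicontinuity of $\partial_x\ti{\sqrt{\hatu_\Delta}}$ in any weak norm is ever needed. You correctly identify Lemma~\ref{lem:BVbound} as the hard estimate and you correctly diagnose why naive lower semicontinuity does not close, but the ``third route'' the paper takes — interpolating the sup-norm smallness against the TV-boundedness — is simpler than either of the alternatives you consider and avoids your main sources of resistance. Concretely, your Aubin--Lions--Simon route requires: (i) a Loeper-type inequality $\|u-v\|_{\dot H^{-1}(\Omega)}\lesssim\|u\|_\infty^{1/2}\wass(u,v)$ on a bounded interval with Neumann boundary, which you cite but would have to state and justify carefully; (ii) the interpolation $\|w\|_{L^2}\le\|w\|_{\dot H^{-1}}^{1/2}\|w\|_{\dot H^1}^{1/2}$, which presupposes $w$ mean-zero, yet $w=\hatu_\Delta(t+h)-\hatu_\Delta(t)$ is generally \emph{not} mean-zero because the piecewise-affine interpolant $\hatu_\Delta^n$ does not carry mass exactly $M$ (only $\baru_\Delta^n$ does), so a further mean correction is needed; and (iii) handling the $\tau$-offset in the time modulus for a piecewise-constant-in-time family, which you wave at but which requires a discrete variant of Simon's criterion. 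All of these are fixable, but none arise in the paper's two-line interpolation argument, which for the same price also gives the claimed $L^2$-in-time integrability of $\partial_x\sqrt{u_*}$ by lower semicontinuity of the TV.
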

Notice that $\partial_x\sqrt{u_*}\in L^2([0,\thi]\times\Omega)$ for each $\thi>0$,
but strong convergence takes place only on each $\tint\times\Omega$.
\begin{proof}
  Fix $\tint\Subset\setR_+$.
  By definition \eqref{eq:TVorig} of the total variation,
  \begin{align*}
    \|\partial_xf\|_{L^2(\Omega)}^2 = \intom \partial_xf(x)^2\dd x \le \left(\sup_{x\in\Omega} |f(x)|\right)\tv{\partial_xf}
  \end{align*}
  holds for every Lipschitz function $f:\Omega\to\setR$.
  The functions $\sqrt{\hatu_\Delta^n}$ are obviously Lipschitz continuous.
  Moreover, thanks to weak lower semi-continuity of the total variation, it follows from \eqref{eq:BVbound} that
  \begin{align*}
    \int_0^\infty \tv{\partial_x\sqrt{u_*}}^2\dd t \le 10(b-a)\olH.
  \end{align*}
  In particular, the weak derivative $x\mapsto\partial_x\sqrt{u_*}(t;x)$ is in $L^\infty(\Omega)$ 
  --- and thus $x\mapsto\sqrt{u_*}(t;x)$ is Lipschitz --- for almost every $t>0$.
  Using that $\tv{f-g}\le\tv{f}+\tv{g}$, we obtain that
  \begin{align*}
    &\int_\tlo^\thi\left\|\partial_x\left(\ti{\sqrt{\hatu_\Delta}}-\sqrt{u_*}\right)\right\|_{L^2(\Omega)}^2\dd t \\
    &\le (\thi-\tlo)^{1/2}\left(\sup_{\tint\times\Omega} \left|\ti{\sqrt{\hatu_\Delta}}-\sqrt{u_*}\right|\right)
    \left(2\int_0^\infty \left(\tv{\partial_x\ti{\sqrt{u_\Delta}}}^2+\tv{\partial_x\sqrt{u_*}}^2\right)\dd t\right)^{1/2}.
  \end{align*}
  For $\Delta\to0$,
  the first term on the right-hand side converges to zero by \eqref{eq:uniformLinfty},
  and the second term remains bounded by \eqref{eq:BVbound}.
  This proves \eqref{eq:strong}.
  
  To show square integrability of the limit, fix some $T>0$.
  Below, $N$ is always such that $T<N\tau<T+1$.
  A direct calculation yields
  that
  \begin{align*}
    4\intom \left(\partial_x\sqrt{\hatu_\Delta^n}\right)^2(x)\dd x 
    = \int_0^M \frac{\partial_\xi\hatz_\Delta^n(\xi)^2}{\hatz_\Delta^n(\xi)\partial_\xi\theX_\Delta^n(\xi)}\dd\xi.
  \end{align*}
  From the properties of $\theX_\Delta^n$ and $\hatz_\Delta^n$ as linear interpolations, 
  one easily deduces that
  \begin{align*}
    \frac1{\hatz_\Delta^n(\xi)\partial_\xi\theX_\Delta^n(\xi)} \le \frac{z^n_\kph}{z^n_\kmh}+\frac{z^n_\kmh}{z^n_\kph}
  \end{align*}
  for all $\xi\in(\xi_\kmh,\xi_\kph)$.
  Therefore,
  \begin{align*}
    &4\int_0^T \left(\intom \ti{\sqrt{\hatu_\Delta}}^2(t;x)\dd x\right)\dd t \\
    &\le \tau\sum_{n=1}^N\delta\sum_{k\in\ivalp}\left(\frac{z^n_\kph-z^n_\kmh}\delta\right)^2\left(\frac{z^n_\kph}{z^n_\kmh}+\frac{z^n_\kmh}{z^n_\kph}\right) \\
    &\le 2\left(\tau\sum_{n=1}^N\delta\sum_{k\in\ivalp}\left[\frac{z^n_\kph-z^n_\kmh}\delta\right]^4\right)^{1/2}
    \left(\tau\sum_{n=1}^N\delta\sum_{k\in\ivalp}\left[\left(\frac{z^n_\kph}{z^n_\kmh}\right)^2+\left(\frac{z^n_\kmh}{z^n_\kph}\right)^2\right]\right)^{1/2}.
  \end{align*}
  The two sums are $\Delta$-uniformly bounded, thanks to the estimates \eqref{eq:L4bound} and \eqref{eq:oscillation}.
  By lower semi-continuity of norms, $\sqrt{u_*}$ obeys the same bound.
\end{proof}

%
\section{Weak formulation of the limit equation}
\label{sec:weak}
%
%
To finish our discussion of convergence, we verify that the limit $u_*$ obtained in the previous section is indeed a weak solution to \eqref{eq:dlss}.
From now on, $(\xvec_\Delta^n)_{n=0}^\infty$ with its derived functions $\baru_\Delta$, $\hatu_\Delta$, $\theX_\Delta$
is a (sub)sequence for which the convergence results stated in Proposition \ref{prp:convergence1} and Proposition \ref{prp:convergence2} holds.
We continue to assume \eqref{eq:Hbound} and \eqref{eq:Fbound}.
The goal of this section is to prove the following.
\begin{prp}
  \label{prp:weakgoal}
  For every $\rho\in C^\infty(\Omega)$ with $\rho'(a)=\rho'(b)=0$,
  and for every $\psi\in C^\infty_c(\setRnn)$,
  \begin{align}
    \label{eq:weakgoal}
    \begin{split}
      &\int_0^\infty \psi'(t)\left(\intom\rho(x)u_*(t;x)\dd x\right)\dd t + \psi(0)\intom \rho(x)u^0(x)\dd x \\
      &\qquad + \int_0^\infty\psi(t)\left(\intom\big[\rho'''(x)\partial_xu_*(t;x) + 4\rho''(x)\partial_x\sqrt{u_*}(t;x)^2\big]\dd x\right)\dd t = 0.
    \end{split}
  \end{align}
\end{prp}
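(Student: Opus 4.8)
The plan is to test the discrete Euler--Lagrange equations \eqref{eq:euler} against a nodal sampling of $\rho'$, carry out a summation by parts in time and two (essentially algebraic) rearrangements in space, and then pass to the limit $\Delta\to0$ using the compactness from Propositions \ref{prp:convergence1} and \ref{prp:convergence2}. One keeps switching between the Lagrangian picture (via the pushforward \eqref{eq:pushforward}) for the time-derivative term and the Eulerian picture (via the local formula \eqref{eq:tildeux}) for the spatial terms.

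Concretely, for each $n\ge1$ I would multiply the $k$th component of \eqref{eq:euler} by $\delta\,\psi(n\tau)\,\rho'(x_k^n)$ and sum over $k\in\ivalp$ and over $n$ (only finitely many $n$ contribute, since $\psi$ has compact support), obtaining
\begin{align*}
  \sum_{n\ge1}\psi(n\tau)\spr{\rho'(\xvec_\Delta^n)}{\xvec_\Delta^n-\xvec_\Delta^{n-1}}
  = -\sum_{n\ge1}\tau\,\psi(n\tau)\spr{\rho'(\xvec_\Delta^n)}{\wgrad\Fz(\xvec_\Delta^n)},
\end{align*}
where $\rho'(\xvec):=(\rho'(x_1),\dots,\rho'(x_{K-1}))$. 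For the left-hand side, the pushforward identity \eqref{eq:pushforward} with $\varphi=\rho$ and a first order Taylor expansion give $\spr{\rho'(\xvec_\Delta^n)}{\xvec_\Delta^n-\xvec_\Delta^{n-1}}=\intom\rho\,\baru_\Delta^n\dd x-\intom\rho\,\baru_\Delta^{n-1}\dd x+\err_\Delta^n$, where the remainder $\err_\Delta^n$ collects the quadratic Taylor term and the error in replacing $\int_0^M\rho'(\theX_\Delta^n)\hatf_k\dd\xi$ by $\delta\rho'(x_k^n)$; after summation against $\psi$, both contributions vanish by the energy bound \eqref{eq:eee} and the smallness \eqref{eq:Fbound}. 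An Abel summation in $n$ then makes the left-hand side converge, by \eqref{eq:weak} and the weak convergence $\baru_\Delta^0\to u^0$ (which follows from $\hatu_\Delta^0\to u^0$ and \eqref{eq:help007}), to $-\int_0^\infty\psi'(t)\big(\intom\rho\,u_*(t)\dd x\big)\dd t-\psi(0)\intom\rho\,u^0\dd x$, i.e.\ the negative of the first two terms in \eqref{eq:weakgoal}.

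The heart of the matter is the right-hand side. Inserting the explicit formula \eqref{eq:Fgrad} and reordering the double sum --- here the hypothesis $\rho'(a)=\rho'(b)=0$ together with the convention \eqref{eq:zconvention} makes all boundary terms vanish identically --- gives
\begin{align*}
  \spr{\rho'(\xvec_\Delta^n)}{\wgrad\Fz(\xvec_\Delta^n)}
  = \sum_{\kappa\in\hval}\big(\rho'(x_\kappp)-\rho'(x_\kappm)\big)\,z_\kappa^2\,\frac{z_{\kappa+1}-2z_\kappa+z_{\kappa-1}}{\delta^2}.
\end{align*}
Replacing $\rho'(x_\kappp)-\rho'(x_\kappm)$ by $\rho''(x_\kappa)\,(x_\kappp-x_\kappm)=\rho''(x_\kappa)\,\delta/z_\kappa$ (the error being $O(\delta^{1/2})$ after summation, via Cauchy--Schwarz and the entropy dissipation estimate \eqref{eq:dFdH_bounded}), then summing by parts once more and Taylor expanding $\rho''$ around the nodes $x_k$, one reaches
\begin{align*}
  \spr{\rho'(\xvec_\Delta^n)}{\wgrad\Fz(\xvec_\Delta^n)}
  = -\delta\sum_{k\in\ivalp}\rho'''(x_k)\frac{z_\kph-z_\kmh}\delta
    -\delta\sum_{k\in\ivalp}\rho''(x_k)\Big(\frac{z_\kph-z_\kmh}\delta\Big)^2+\text{errors},
\end{align*}
where the new error terms involve the grid ratios $z_\kph/z_\kmh-1$: these are $O(1)$ pointwise, but become negligible after summation against $\tau\,\psi(n\tau)$ thanks to the oscillation estimate \eqref{eq:oscillation} (whose $\delta^{1/2}$ is decisive) combined with the bound \eqref{eq:Fboundfine} on $\Fz(\xvec_\Delta^n)$. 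By \eqref{eq:tildeux}, the two surviving sums coincide, up to lower order errors controlled by \eqref{eq:L4bound} and \eqref{eq:eee}, with $\intom\rho'''\,\partial_x\hatu_\Delta^n\dd x$ and $4\intom\rho''\,\big(\partial_x\sqrt{\hatu_\Delta^n}\big)^2\dd x$.

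Thus the right-hand side of the discrete identity equals $\sum_{n}\tau\,\psi(n\tau)\big[\intom\rho'''\partial_x\hatu_\Delta^n\dd x+4\intom\rho''\big(\partial_x\sqrt{\hatu_\Delta^n}\big)^2\dd x\big]$ up to errors that vanish as $\Delta\to0$; it remains to pass to the limit in this sum. Writing $\partial_x\hatu_\Delta=2\sqrt{\hatu_\Delta}\,\partial_x\sqrt{\hatu_\Delta}$, the uniform convergence $\sqrt{\hatu_\Delta}\to\sqrt{u_*}$ from \eqref{eq:uniformLinfty} together with the strong $L^2_\loc(\setR_+;H^1(\Omega))$ convergence of $\sqrt{\hatu_\Delta}$ from Proposition \ref{prp:convergence2} gives $\partial_x\hatu_\Delta\to\partial_xu_*$ and $\big(\partial_x\sqrt{\hatu_\Delta}\big)^2\to\big(\partial_x\sqrt{u_*}\big)^2$ strongly in $L^1_\loc$, so that this sum converges to the third term of \eqref{eq:weakgoal}; equating the limits of the two sides of the discrete identity yields \eqref{eq:weakgoal}. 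I expect the main obstacle to be precisely this step for the nonlinear term: passing to the limit in $\intom\rho''\big(\partial_x\sqrt{\hatu_\Delta}\big)^2\dd x$ genuinely needs \emph{strong} convergence of $\partial_x\sqrt{\hatu_\Delta}$, not merely the weak $L^2$ bound, which is why the total variation estimate of Lemma \ref{lem:BVbound}, hence the discrete entropy dissipation \eqref{eq:dFdH_bounded}, is indispensable. A close second is the bookkeeping of the consistency errors in the two spatial rearrangements, which are only small \emph{on average} and whose elimination calls on \eqref{eq:dFdH_bounded}, \eqref{eq:L4bound} and \eqref{eq:oscillation} together --- so that both discrete Lyapunov functionals genuinely enter.
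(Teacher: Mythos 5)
Your overall strategy coincides with the paper's: test the discrete Euler--Lagrange equation \eqref{eq:euler} against nodal samples of $\rho'$ weighted by $\psi$, sum by parts in time (the content of estimate \eqref{eq:weakform1}), rewrite the spatial term to bring out the $\rho''$- and $\rho'''$-structures (the content of \eqref{eq:weakform2}), and pass to the limit using the strong $H^1$-compactness of $\ti{\sqrt{\hatu_\Delta}}$ from Proposition \ref{prp:convergence2} together with Vitali's theorem. The time-term treatment and the final limit passage are essentially the paper's. Where you take a genuinely different route is the spatial rearrangement. Starting, as the paper does, from $\spr{\rho'(\xvec_\Delta^n)}{\wgrad\Fz(\xvec_\Delta^n)}=\sum_\kappa\big(\rho'(x_\kappp)-\rho'(x_\kappm)\big)z_\kappa^2\,\delta^{-2}(z_{\kappa+1}-2z_\kappa+z_{\kappa-1})$, you first Taylor-replace $\rho'(x_\kappp)-\rho'(x_\kappm)$ by $\rho''(x_\kappa)\,\delta/z_\kappa$ (a mid-point expansion whose cubic error is controllable via Cauchy--Schwarz, \eqref{eq:xpower} and the entropy dissipation \eqref{eq:dFdH_bounded}), then sum by parts once and Taylor-expand $\rho''(x_\kpmh)$ around $x_k$, landing directly on the two target sums $\delta\sum_k\rho'''(x_k)\,\delta^{-1}(z_\kph-z_\kmh)$ and $\delta\sum_k\rho''(x_k)\big(\delta^{-1}(z_\kph-z_\kmh)\big)^2$, with remainder terms involving the grid ratios $z_\kph/z_\kmh-1$ that \eqref{eq:oscillation} kills after the time summation. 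The paper instead sums by parts first, applies the product-rule identity $\alpha_+\beta_+-\alpha_-\beta_-=\tfrac12(\alpha_++\alpha_-)(\beta_+-\beta_-)+\tfrac12(\alpha_+-\alpha_-)(\beta_++\beta_-)$ to split the result into four pieces $A^n_1,\dots,A^n_4$, compares each piece to a $\xi$-integral of $\partial_\xi\hatz_\Delta^n$ or $(\partial_\xi\hatz_\Delta^n)^2$ in the dedicated $R_1$--$R_5$ lemmas, and only then changes variables back to $x$ via \eqref{eq:cov1}--\eqref{eq:cov2}. Your route reaches the Eulerian integrals with less intermediate machinery, at the price of having to redo ``from scratch'' the error bounds that the paper organizes term-by-term; the paper's decomposition is more modular and more clearly shows which error calls on which of \eqref{eq:dFdH_bounded}, \eqref{eq:L4bound}, \eqref{eq:oscillation}. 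Both need the same trio of a priori bounds and both hinge on the strong convergence of $\partial_x\sqrt{\hatu_\Delta}$ for the nonlinear term, which you correctly trace back to the TV estimate \eqref{eq:BVbound} and hence to the discrete entropy dissipation.
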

For definiteness, fix a spatial test function $\rho\in C^\infty(\Omega)$ with $\rho'(a)=\rho'(b)=0$, 
and a temporal test function $\psi\in C^\infty_c(\setRnn)$ with $\operatorname{supp}\psi\subset[0,T)$ for a suitable $T>0$.
Let $B>0$ be chosen such that
\begin{align}
  \label{eq:smoothbound}
  \|\rho\|_{C^4(\Omega)}\le B, \quad \|\psi\|_{C^1(\setR_+)}\le B.
\end{align}
For convenience, we assume $\delta<1$ and $\tau<1$.
Further, we introduce the short-hand notation 
\begin{align}
  \label{eq:shorthand}
  \rho'(\xvec_\Delta^n)=\big(\rho'(x^n_1),\ldots,\rho'(x^n_{K-1})\big)\in\setR^{K-1}.
\end{align}
In the estimates that follow, the non-explicity constants possibly depend on $(b-a)$, $T$, $B$, and $\olH$,
but not $\Delta$.
The two main steps in the proof of Proposition \ref{prp:weakgoal} are to establish the following estimates, respectively:
\begin{align}
  \label{eq:weakform1}
  \begin{split}
    \err_{1,\Delta}:=\Bigg|\int_0^T \left(\psi'(t)\intom \rho(x)\ti{\baru_\Delta}(t;x) \dd x
      +\psi(t)\ti{\spr{\rho'(\xvec_\Delta)}{\wgrad\Fz(\xvec_\Delta)}}(t)\right)\dd t\\
    +\psi(0)\intom\rho(x)\baru_\Delta^0(x)\dd x\Bigg|
    \le C\big((\delta\Fz(\xvec_\Delta^0))^{1/2}+(\tau\Fz(\xvec_\Delta^0))\big),     
  \end{split}
\end{align}
and
\begin{align}
  \label{eq:weakform2}
  \begin{split}
    \err_{2,\Delta}:=\Bigg|\int_0^T \psi(t)\Bigg(\int_\Omega\big[\rho'''(x)\partial_x\ti{\hatu_\Delta}(t;x)+4\rho''(x)\partial_x\ti{\sqrt{\hatu_\Delta}}(t;x)^2\big] \dd x\\
    -\ti{\spr{\rho'(\xvec_\Delta^n)}{\wgrad\Fz(\xvec_\Delta^n)}}(t)\Bigg)\dd t\Bigg|
    \le C\delta^{1/4}.   
  \end{split}
\end{align}
We proceed by proving \eqref{eq:weakform1} and \eqref{eq:weakform2}.
At the end of this section, it is shown how the claim \eqref{eq:weakgoal} follows 
from \eqref{eq:weakform1}\&\eqref{eq:weakform2} on basis of the convergence for $\ti{\baru_\Delta}$ obtained previously.
%
\begin{proof}[Proof of \eqref{eq:weakform1}]
  Choose $N_\tau\in\setN$ such that $N_\tau\tau\in(T,T+1)$.
  Then, using that $\psi(N_\tau\tau)=0$,
  we obtain after ``summation by parts'':
  \begin{equation}
    \label{eq:dummy814}
    \begin{split}
      -\int_0^T&\psi'(t)\left(\int_\Omega \rho(x)\ti{\bar u_\Delta}(t;x) \dd x\right)\dd t 
      = -\sum_{m=1}^{N_\tau} \left(\int_{(m-1)\tau}^{m\tau}\psi'(t)\dd t \int_\Omega\rho(x)\bar u_\Delta^m(x)\dd x\right) \\
      &= -\tau\sum_{m=1}^{N_\tau}\left(\frac{\psi(m\tau)-\psi((m-1)\tau)}\tau\,\int_0^M\rho\circ\theX_\Delta^m(\xi)\dd\xi\right) \\
      &= \tau\sum_{n=1}^{N_\tau}\left(\psi((n-1)\tau)\,\int_0^M\frac{\rho\circ\theX_\Delta^n-\rho\circ\theX_\Delta^{n-1}}\tau(\xi)\dd\xi\right) 
      + \psi(0)\int_0^M\rho\circ\theX_\Delta^0(\xi)\dd\xi.
    \end{split}
  \end{equation}
  A Taylor expansion of the term in the inner integral yields
  \begin{align}
    \label{eq:dummy101}
    \frac{\rho\circ\theX_\Delta^n-\rho\circ\theX_\Delta^{n-1}}\tau
    &= \rho'\circ\theX_\Delta^n\,\left(\frac{\theX_\Delta^n-\theX_\Delta^{n-1}}\tau\right)
    + \frac\tau2\rho''\circ\widetilde{\theX}\,\left(\frac{\theX_\Delta^n-\theX_\Delta^{n-1}}\tau\right)^2.
  \end{align}
  where $\widetilde{\theX}$ symbolizes suitable ``intermediate values'' in $[0,M]$.
  We analyze the first term on the right-hand side of \eqref{eq:dummy101}:
  using the representation \eqref{eq:cX} of $\theX_\Delta$ in terms of hat functions $\hatf_k$,
  we can write its integral as follows,
  \begin{align}
    \label{eq:dummy102}
    \int_0^M \rho'\circ\theX_\Delta^n\,\left(\frac{\theX_\Delta^n-\theX_\Delta^{n-1}}\tau\right)\dd\xi
    = \sum_{k\in\ivalp}\left(\frac{x_k^n-x^{n-1}_k}\tau\right)\int_{\xi_{k-1}}^{\xi_{k+1}} \rho'\circ\theX_\Delta^n\hatf_k\dd\xi.
  \end{align}  
  On the other hand, since
  \begin{align}
    \label{eq:dummy808}
    \int_{\xi_{k-1}}^{\xi_{k+1}}\hatf_k\dd\xi = \delta,
  \end{align}
  the discrete evolution equation \eqref{eq:euler} yields that
  \begin{align}
    \label{eq:dummy103}
    -\spr{\rho'(\xvec_\Delta^n)}{\wgrad\Fz(\xvec_\Delta^n)}
    = \spr{\rho'(\xvec_\Delta^n)}{\frac{\xvec_\Delta^n-\xvec_\Delta^{n-1}}\tau}
    =\sum_{k\in\ivalp}\left(\frac{x^n_k-x^{n-1}_k}\tau\right)\int_{\xi_{k-1}}^{\xi_{k+1}}\rho(x^n_k)\hatf_k(\xi)\dd\xi.
  \end{align}
  Finally, observing that
  \begin{align*}
    |\theX_\Delta^n(\xi)-x^n_k|\le(x^n_{k+1}-x^n_{k-1}) \quad\text{for each $\xi\in(\xi_{k-1},\xi_{k+1})$},
  \end{align*}
  we can estimate the difference of the terms in \eqref{eq:dummy102} and \eqref{eq:dummy103}
  with the help of the bound \eqref{eq:smoothbound} on $\rho$ as follows:
  \begin{equation}
    \label{eq:dummy815}
    \begin{split}
      &\left|\int_0^M\rho'\circ\theX_\Delta^n(\xi)\,\left(\frac{\theX_\Delta^n-\theX_\Delta^{n-1}}\tau\right)(\xi)\dd\xi 
        - \spr{\rho'(\xvec_\Delta^n)}{\wgrad\Fz(\xvec_\Delta^n)}\right| \\
      &\le \sum_{k\in\ivalp}\left|\frac{x^n_k-x^{n-1}_k}\tau\right|
      \int_{\xi_{k-1}}^{\xi_{k+1}}\big|\rho'\circ\theX_\Delta^n(\xi)-\rho'(x^n_k)\big|\hatf_k(\xi)\dd\xi \\
      &\le B\delta\sum_{k\in\ivalp}\left|\frac{x^n_k-x^{n-1}_k}\tau\right|(x^n_{k+1}-x^n_{k-1}).
    \end{split}
  \end{equation}
  As a final preparation for the proof of \eqref{eq:weakform1},
  observe that
  \begin{align*}
    R':=&\left|\int_0^T\psi(t)\ti{\spr{\rho'(\xvec_\Delta)}{\wgrad\Fz(\xvec_\Delta)}}(t)\dd t 
      - \tau\sum_{n=1}^{N_\tau}\psi((n-1)\tau)\spr{\rho'(\xvec_\Delta^n)}{\wgrad\Fz(\xvec_\Delta^n)}\right|\\
    &\le \left(\tau\sum_{n=1}^{N_\tau}\left|\frac1\tau\int_{(n-1)\tau}^{n\tau}\psi(t)\dd t-\psi((n-1)\tau)\right|^2\right)^{1/2}
    \left(\tau\sum_{n=1}^\infty B^2\nrm{\wgrad\Fz(\xvec_\Delta^n)}^2\right)^{1/2}\\
    &\le \big((T+1)B^2\tau^2\big)^{1/2}(2B^2\Fz(\xvec_\Delta^0))^{1/2} = C'\Fz(\xvec_\Delta^0)^{1/2}\tau,
  \end{align*}
  using the energy estimate \eqref{eq:eee}.
  We are now ready to estimate $\err_{1,\Delta}$ in \eqref{eq:weakform1}:
  \begin{align*}
    \err_{1,\Delta}
    &\stackrel{\eqref{eq:dummy814}}{\le} R' + \tau\sum_{n=1}^{N_\tau}\left(\big|\psi((n-1)\tau)\big|
      \,\left|\int_0^M\frac{\rho\circ\theX_\Delta^n-\rho\circ\theX_\Delta^{n-1}}\tau(\xi)\dd\xi
        -\spr{\rho'(\xvec_\Delta^n)}{\wgrad\Fz(\xvec_\Delta^n)}\right|\right)
    \\
    &\stackrel{\eqref{eq:dummy101}}{\le} 
    R'+ B\tau\sum_{n=1}^{N_\tau}\Bigg(\left|\int_0^M\rho'\circ\theX_\Delta^n(\xi)\,\left(\frac{\theX_\Delta^n-\theX_\Delta^{n-1}}\tau\right)(\xi)\dd\xi 
      - \spr{\rho'(\xvec_\Delta^n)}{\wgrad\Fz(\xvec_\Delta^n)}\right| \\
    &\qquad +\frac{B\tau}2\int_0^M \left(\frac{\theX_\Delta^n-\theX_\Delta^{n-1}}\tau\right)^2(\xi)\dd\xi\Bigg)\\
    &\stackrel{\eqref{eq:dummy815}}{\le} R' + B^2\left(\tau\sum_{n=1}^\infty\delta\sum_{k\in\ivalp}\left(\frac{x^n_k-x^{n-1}_k}\tau\right)^2\right)^{1/2}
    \left(\tau\sum_{n=1}^{N_\tau}\delta\sum_{k\in\ivalp}(x^n_{k+1}-x^n_{k-1})^2\right)^{1/2} \\
    & + \frac{B^2\tau}2\tau\sum_{n=1}^\infty\left\|\frac{\theX_\Delta^n-\theX_\Delta^{n-1}}\tau\right\|_{L^2([0,M])}^2\\
    &\le C'(\tau\Fz(\xvec_\Delta^0)) + B^2\big(2(b-a)^2T\big)^{1/2}(\delta\Fz(\xvec_\Delta^0))^{1/2} + B^2(\tau\Fz(\xvec_\Delta^0)),
  \end{align*}
  where we have used the energy estimate \eqref{eq:eee} and the the bound \eqref{eq:xpower}.
\end{proof}
%
%
The proof of \eqref{eq:weakform2} requires more calculations, which are distributed in a series of lemmata below.
The first step is to derive a fully discrete weak formulation from \eqref{eq:euler}.
\begin{lem}
  With \eqref{eq:shorthand}, one has that
  \begin{align}
    \label{eq:fourAs}
    -\spr{\rho'(\xvec_\Delta^n)}{\wgrad\Fz(\xvec_\Delta^n)} = A^n_1-A^n_2+A^n_3+A^n_4,
  \end{align}
  where
  \begin{align*}
    A^n_1 &= \delta\sum_{k\in\ivalp}\left(\frac{z^n_\kph-z^n_\kmh}{\delta}\right)^2\left(\frac{z^n_\kph+z^n_\kmh}2\right)\left(\frac{\rho'(x^n_{k+1})-\rho'(x^n_{k-1})}{\delta}\right), \\
    A^n_2 &= \delta\sum_{k\in\ivalp}\left(\frac{z^n_\kph-z^n_\kmh}{\delta}\right)^2\left(\frac{(z^n_\kph)^2+(z^n_\kmh)^2}{2 z^n_\kph z^n_\kmh}\right)\rho''(x^n_k), \\
    A^n_3 &= \delta\sum_{k\in\ivalp}\left(\frac{z^n_\kph-z^n_\kmh}{\delta}\right)\left(\frac{(z^n_\kph)^2+(z^n_\kmh)^2}2\right)
    \left(\frac{\rho'(x^n_{k+1})-\rho'(x^n_k)-(x^n_{k+1}-x^n_k)\rho''(x^n_k)}{\delta^2}\right), \\
    A^n_4 &= \delta\sum_{k\in\ivalp}\left(\frac{z^n_\kph-z^n_\kmh}{\delta}\right)\left(\frac{(z^n_\kph)^2+(z^n_\kmh)^2}2\right)
    \left(\frac{\rho'(x^n_{k-1})-\rho'(x^n_{k})-(x^n_{k-1}-x^n_{k})\rho''(x^n_k)}{\delta^2}\right).
  \end{align*}
\end{lem}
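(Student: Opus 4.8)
\emph{Approach.} The identity \eqref{eq:fourAs} is purely algebraic: it is the fully discrete counterpart of integrating the operator $(Z^2Z_{\xi\xi})_\xi$ against $\rho'(X)$ by parts twice, followed by a Taylor sorting of the resulting terms. The plan is therefore to make the left-hand side explicit, perform one discrete summation by parts, and then reorganize. Writing $x_k,z_\kappa$ for the entries of $\xvec_\Delta^n$ and $\zvec_\Delta^n$ as usual, and using the hypothesis $\rho'(a)=\rho'(b)=0$ together with the convention $\ee_0=\ee_K=0$, the representation \eqref{eq:Fgrad} of $\wgrad\Fz$ and the definition of $\spr{\cdot}{\cdot}$ give, with the convention $x_0=a$, $x_K=b$,
\[
  -\spr{\rho'(\xvec_\Delta^n)}{\wgrad\Fz(\xvec_\Delta^n)}
  = -\sum_{\kappa\in\hval} z_\kappa^2\,\frac{z_{\kappa+1}-2z_\kappa+z_{\kappa-1}}{\delta^2}\,\big(\rho'(x_\kappp)-\rho'(x_\kappm)\big).
\]

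\emph{Summation by parts.} I would write the discrete Laplacian as a difference of forward differences, $z_{\kappa+1}-2z_\kappa+z_{\kappa-1}=(z_{\kappa+1}-z_\kappa)-(z_\kappa-z_{\kappa-1})$, split the sum, and shift the index in one of the two pieces. The two boundary forward differences $z_{\frac12}-z_{-\frac12}$ and $z_{K+\frac12}-z_{K-\frac12}$ vanish by the convention \eqref{eq:zconvention}, so no boundary terms are produced, and one is left with a sum over $k\in\ivalp$:
\[
  -\spr{\rho'(\xvec_\Delta^n)}{\wgrad\Fz(\xvec_\Delta^n)}
  = \frac1{\delta^2}\sum_{k\in\ivalp}(z_\kph-z_\kmh)\Big[z_\kph^2\big(\rho'(x_{k+1})-\rho'(x_k)\big) - z_\kmh^2\big(\rho'(x_k)-\rho'(x_{k-1})\big)\Big].
\]

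\emph{Sorting the terms.} Next I would split the coefficients symmetrically, $z_\kpmh^2=\tfrac12(z_\kph^2+z_\kmh^2)\pm\tfrac12(z_\kph^2-z_\kmh^2)$. The antisymmetric contribution multiplies $\rho'(x_{k+1})-\rho'(x_{k-1})$, and using $(z_\kph-z_\kmh)(z_\kph^2-z_\kmh^2)=(z_\kph-z_\kmh)^2(z_\kph+z_\kmh)$ it is recognized as exactly $A^n_1$. The symmetric contribution multiplies the discrete second difference $\rho'(x_{k+1})-2\rho'(x_k)+\rho'(x_{k-1})$; into this I would insert the first-order Taylor remainders $R^\pm_k:=\rho'(x_{k\pm1})-\rho'(x_k)-(x_{k\pm1}-x_k)\rho''(x_k)$ and use $x_{k\pm1}-x_k=\pm\delta/z_\kpmh$ from \eqref{eq:zk}. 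The principal part $(x_{k+1}+x_{k-1}-2x_k)\rho''(x_k)=-\delta\,\frac{z_\kph-z_\kmh}{z_\kph z_\kmh}\rho''(x_k)$ produces precisely $-A^n_2$, while the two remainders $R^+_k$ and $R^-_k$ produce $A^n_3$ and $A^n_4$ respectively. Collecting all contributions yields \eqref{eq:fourAs}.

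\emph{Main obstacle.} There is no conceptual difficulty here; the proof is bookkeeping. The two points requiring care are: (i) carrying out the index shift in the summation by parts correctly and verifying that the boundary terms genuinely cancel by virtue of \eqref{eq:zconvention} and $\rho'(a)=\rho'(b)=0$; and (ii) keeping the powers of $\delta$ straight throughout, since the scalar product $\spr{\cdot}{\cdot}$, the rescaling in $\wgrad$, and the various difference quotients each carry their own factors of $\delta$.
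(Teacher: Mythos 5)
Your proof is correct and follows essentially the same route as the paper's: expand the scalar product via \eqref{eq:Fgrad}, perform one discrete summation by parts (with the boundary terms vanishing by \eqref{eq:zconvention} and $\rho'(a)=\rho'(b)=0$), then split the resulting coefficients into symmetric and antisymmetric parts and insert first-order Taylor remainders. The paper phrases the sorting step via the abstract identity $\alpha_+\beta_+-\alpha_-\beta_-=\tfrac12(\alpha_++\alpha_-)(\beta_+-\beta_-)+\tfrac12(\alpha_+-\alpha_-)(\beta_++\beta_-)$, whereas you split $z_{k\pm 1/2}^2$ directly; these are the same algebraic manipulation, and both then identify the $\rho''(x_k)$ principal part with $-A_2^n$ through the relation $x_{k+1}+x_{k-1}-2x_k=-\delta(z_\kph-z_\kmh)/(z_\kph z_\kmh)$.
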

\begin{proof}
  Fix some time index $n\in\setN$ (omitted in the calculations below).
  Recall the representation of $\wgrad\Fz$ from \eqref{eq:Fgrad}.
  By definition of $\rho'(\xvec_\Delta)$, it follows via a ``summation by parts'' that
  \begin{align*}
    &-\langle\nabla_{\xvec}\Fz(\xvec_\Delta),\rho'(\xvec_\Delta)\rangle_\delta
    =-\delta\sum_{\kappa\in\hval}\frac1\delta\left(\frac{z_{\kappa+1}-z_\kappa}\delta-\frac{z_\kappa-z_{\kappa-1}}\delta\right)z_\kappa^2\left(\frac{\rho'(x_\kappp)-\rho'(x_\kappm)}\delta\right) \\
    &=\delta\sum_{k\in\ivalp}\left(\frac{z_\kph-z_\kmh}\delta\right)\frac1\delta\left(z_\kph\frac{\rho'(x_{k+1})-\rho'(x_k)}\delta - z_\kmh\frac{\rho'(x_k)-\rho'(x_{k-1})}\delta \right).
  \end{align*}
  Using the elementary identity (for arbitrary numbers $\alpha_\pm$ and $\beta_\pm$)
  \begin{align*}
    \alpha_+\beta_+-\alpha_-\beta_- = \frac{\alpha_++\alpha_-}2(\beta_+-\beta_-) + (\alpha_+-\alpha_-)\frac{\beta_++\beta_-}2,
  \end{align*}
  we obtain further:
  \begin{align}
    \nonumber
    &-\langle\nabla_{\xvec}\Fz(\xvec_\Delta),\rho'(\xvec_\Delta)\rangle_\delta \\
    \label{eq:help1}
    &=  \delta\sum_{k\in\ivalp}\left(\frac{z_\kph-z_\kmh}{\delta}\right)\left(\frac{z_\kph^2-z_\kmh^2}{2\delta}\right)\left(\frac{\rho'(x_{k+1})-\rho'(x_{k-1})}{\delta}\right) \\
    \label{eq:help2}
    &+  \delta\sum_{k\in\ivalp}\left(\frac{z_\kph-z_\kmh}{\delta}\right)\left(\frac{z_\kph^2+z_\kmh^2}2\right)\left(\frac{\rho'(x_{k+1})-2\rho'(x_k)+\rho'(x_{k-1})}{\delta^2}\right).
  \end{align}
  The sum in \eqref{eq:help1} equals to $A_1^n$.
  In order to see that the sum in \eqref{eq:help2} equals to $-A_2^n+A_3^n+A_4^n$,
  simply observe that the identity
  \begin{align*}
    \frac{x_{k+1}-x_k}\delta + \frac{x_{k-1}-x_k}\delta = \frac1{z_\kph}-\frac1{z_\kmh}=-\frac{z_\kph-z_\kmh}{z_\kph z_\kmh},
  \end{align*}
  makes the coefficient of $\rho''(x^n_k)$ vanish.
\end{proof}
\begin{lem}
  \label{lem:R1}
  There is a constant $C_1>0$ 
  such that for each $N$ with $N\tau<T$, 
  one has
  \begin{align*}
    R_1:=\tau\sum_{n=1}^N\bigg|A^n_1-2\int_0^M \partial_\xi\hatz^n_\Delta(\xi)^2\rho''\circ\theX^n_\Delta(\xi)\dd\xi\bigg|
    \le C_1\delta^{1/4}.
  \end{align*}
\end{lem}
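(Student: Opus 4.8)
The plan is to make $A^n_1$ and the integral fully explicit, identify their difference as a sum over $k$ of ``local oscillation times local curvature of $\rho$'', and then split that sum by a Cauchy--Schwarz inequality so that one factor is absorbed by the $L^4$-bound \eqref{eq:L4bound} and the other by the refined oscillation bound \eqref{eq:oscillation}; the exponent $\tfrac14$ is precisely half of the $\tfrac12$ carried by \eqref{eq:oscillation}. For the explicit forms: on each $(\xi_\kmh,\xi_\kph)$, $k\in\ivalp$, the affine interpolant has $\partial_\xi\hatz^n_\Delta\equiv(z_\kph-z_\kmh)/\delta$, while $\theX^n_\Delta$ is piecewise affine with slopes $z_\kmh^{-1}$ on $(\xi_\kmh,\xi_k)$ and $z_\kph^{-1}$ on $(\xi_k,\xi_\kph)$; since $x_\kmh$, $x_\kph$ are the midpoints of $[x_{k-1},x_k]$, $[x_k,x_{k+1}]$, pushing forward by $\theX^n_\Delta$ (via \eqref{eq:pushforward} on each affine piece) gives
\[
  2\int_0^M\bigl(\partial_\xi\hatz^n_\Delta\bigr)^2\rho''\!\circ\!\theX^n_\Delta\dd\xi
  =2\sum_{k\in\ivalp}\Bigl(\tfrac{z_\kph-z_\kmh}{\delta}\Bigr)^{\!2}\Bigl[z_\kmh\!\int_{x_\kmh}^{x_k}\!\rho''+z_\kph\!\int_{x_k}^{x_\kph}\!\rho''\Bigr].
\]
Writing $\rho'(x_{k+1})-\rho'(x_{k-1})=\int_{x_{k-1}}^{x_{k+1}}\rho''$ in $A^n_1$ and subtracting, the $k$-th term of $A^n_1-2\int(\cdots)$ becomes $\bigl(\tfrac{z_\kph-z_\kmh}{\delta}\bigr)^2\mathbf B_k$, where after regrouping the four half-cells $\mathbf B_k$ splits into an ``oscillation'' part proportional to $z_\kph-z_\kmh$ times a second difference of $\rho'$, and midpoint-rule remainders of the form $\tfrac{z_\kph+z_\kmh}{2}\bigl(2\!\int_{x_{k\pm1/2}}^{x_k}\rho''-\int_{x_{k\pm1}}^{x_k}\rho''\bigr)$.

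Next comes the pointwise estimate of $\mathbf B_k$. Since $\rho\in C^4(\Omega)$ with $\|\rho\|_{C^4}\le B$, a Taylor expansion of $\rho'$ about $x_k$ bounds the oscillation part by $CB\,|x_{k+1}-2x_k+x_{k-1}|=CB\,\delta\,|z_\kph^{-1}-z_\kmh^{-1}|$ and the remainders by $CB(x_{k\pm1}-x_k)^2=CB\,\delta^2 z_{k\pm1/2}^{-2}$. The decisive move is to convert these using $z_\kappa\ge\delta/(b-a)$ from \eqref{eq:lohi} so that every bare mesh-width becomes $\le b-a$ while the remaining quotient is kept as the oscillation itself; e.g. $\delta\,|z_\kph^{-1}-z_\kmh^{-1}|=\tfrac{\delta}{z_\kmh}\bigl|1-\tfrac{z_\kmh}{z_\kph}\bigr|\le(b-a)\,q_k^{1/2}$, with
\[
  q_k:=\Bigl(\tfrac{z_\kph}{z_\kmh}-1\Bigr)^{\!2}+\Bigl(\tfrac{z_\kmh}{z_\kph}-1\Bigr)^{\!2}
\]
the quantity controlled by \eqref{eq:oscillation}. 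One must resist further bounding $q_k^{1/2}$ by $(b-a)\,(z_\kph-z_\kmh)/\delta$ (legitimate via \eqref{eq:lohi} but fatal, as it trades away all the $\delta$-smallness of \eqref{eq:oscillation}). Carried through, this yields $|\mathbf B_k|\le C(B,b-a)\,(1+\delta)\,q_k^{1/2}$, the $\delta$-factored part coming from the remainders (which, alternatively, are also handled against the time-integrated Fisher information via Lemma~\ref{lem:Fbound}).

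For the summation, write $a_k:=(z_\kph-z_\kmh)/\delta$, so that
\[
  R_1\le C(B,b-a)\,(1+\delta)\,\tau\sum_{n=1}^N\delta\sum_{k\in\ivalp}a_k^2\,q_k^{1/2},
\]
and apply Cauchy--Schwarz jointly in $(n,k)$:
\[
  \tau\sum_{n=1}^N\delta\sum_{k\in\ivalp}a_k^2\,q_k^{1/2}
  \le\Bigl(\tau\sum_{n=1}^\infty\delta\sum_{k\in\ivalp}a_k^4\Bigr)^{1/2}\Bigl(\tau\sum_{n=1}^N\delta\sum_{k\in\ivalp}q_k\Bigr)^{1/2}
  \le(9\olH)^{1/2}\bigl(6(b-a)^2T^{1/2}\olH^{1/2}\bigr)^{1/2}\delta^{1/4},
\]
using \eqref{eq:L4bound} for the first factor and \eqref{eq:oscillation} for the second. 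Collecting constants into a single $C_1=C_1(b-a,T,B,\olH)$ finishes the proof.

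The step I expect to be the real obstacle is the pointwise estimate: a careless bound on $\mathbf B_k$ — for instance using \eqref{eq:weakoscillation}, or converting oscillations back into $L^4$-quantities — produces only a $\Delta$-uniform bound of size $O(\olH)$ rather than the advertised $O(\delta^{1/4})$. Obtaining genuine smallness forces one to keep each discrepancy at the level of $q_k^{1/2}$, i.e. exactly one square root away from $\sum_k q_k$, which is the object carrying the crucial extra factor $\delta^{1/2}$ in \eqref{eq:oscillation}; all the remaining $z_\kappa$-weights must be discharged either against the uniform lower bound $z_\kappa\ge\delta/(b-a)$ (turning them into harmless mesh-widths $\le b-a$) or, for the lower-order remainders, against Lemma~\ref{lem:Fbound}. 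The bookkeeping of the powers of $\delta$, of the $z_\kappa$-weights, and of the oscillation so that nothing more than $(\delta^{1/2})^{1/2}=\delta^{1/4}$ is lost is the crux of the argument.
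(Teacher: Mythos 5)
Your approach is essentially the paper's: the paper likewise splits $R_1\le R_{1a}^{1/2}R_{1b}^{1/2}$ by Cauchy--Schwarz in $(n,k)$ with weight $\tau\delta$, absorbing $R_{1a}=\tau\sum\delta\sum a_k^4$ via \eqref{eq:L4bound}, and decomposing the remaining pointwise error into an ``oscillation'' piece controlled by $q_k$ and a ``mesh-width'' piece $(x_{k+1}-x_{k-1})^2$. The only difference is cosmetic — you push forward the integral explicitly, whereas the paper introduces mean-value intermediate points $\tilde x_k^\pm$ — and the balance of exponents ($\tfrac14 = \tfrac12\cdot\tfrac12$) is exactly the one the paper exploits.

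One point of imprecision: the claimed pointwise bound $|\mathbf B_k|\le C(1+\delta)q_k^{1/2}$ is not correct as stated. The midpoint-rule remainder terms, bounded by $CB(x_{k\pm1}-x_k)^2=CB\,\delta^2 z_{k\pm1/2}^{-2}$, cannot be absorbed into $\delta\,q_k^{1/2}$ pointwise — when the mesh has no oscillation ($q_k=0$) the remainders can still be as large as $O((b-a)^2)$. They must be carried as a separate additive contribution. That separate term is then handled \emph{after} the Cauchy--Schwarz, in the $R_{1b}$-factor, using \eqref{eq:xpower}:
\[
\tau\sum_{n=1}^N\delta\sum_{k\in\ivalp}(x^n_{k+1}-x^n_{k-1})^2 \le 4T(b-a)^2\,\delta,
\]
which even gives a stronger $\delta^{1/2}$ for that piece. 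Your parenthetical appeal to Lemma~\ref{lem:Fbound} is the wrong tool here — that lemma only yields a $\Delta$-uniform bound of size $O((n\tau)^{-1/2})$, not $\delta$-smallness; the gain of a factor $\delta$ from the remainder comes from \eqref{eq:xpower} together with the $\delta$-weight in the sum over $k$. With that correction the argument goes through exactly as the paper intends.
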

\begin{proof}
  First, observe that by definition of $\hatz$,
  \begin{align*}
    \int_0^M \partial_\xi\hatz^n_\Delta(\xi)^2\rho''\circ\theX^n_\Delta(\xi)\dd\xi
    = \sum_{k\in\ivalp} \Big(\frac{z^n_\kph-z^n_\kmh}{\delta}\Big)^2 \int_{\xi_\kmh}^{\xi_\kph}\rho''\circ\theX^n_\Delta(\xi)\dd\xi,
  \end{align*}
  and therefore, by H\"older's inequality,
  \begin{align}
    \label{eq:R1}
    R_1 \le R_{1a}^{1/2}R_{1b}^{1/2},
  \end{align}
  with, recalling \eqref{eq:L4bound},
  \begin{align}
    \label{eq:R1a}
    R_{1a} &= \tau\sum_{n=1}^N\delta\sum_{k\in\ivalp}\Big(\frac{z^n_\kph-z^n_\kmh}{\delta}\Big)^4 
    \le \tau\sum_{n=1}^\infty\|\hatz_\Delta^n\|_{L^4(\Omega)}^4\le9\overline\HF, \\
    R_{1b} &= \tau\sum_{n=1}^N\delta\sum_{k\in\ivalp}\Big[\frac{z^n_\kph+z^n_\kmh}2\frac{\rho'(x^n_{k+1})-\rho'(x^n_{k-1})}{\delta}-\frac2\delta\int_{\xi_\kmh}^{\xi_\kph}\rho''\circ\theX^n_\Delta\dd\xi\Big]^2.
  \end{align}
  To simplify $R_{1b}$, let us fix $n$ (omitted in the following), 
  and introduce $\tilde x_k^+\in(x_k,x_{k+1})$ and $\tilde x_k^-\in(x_{k-1},x_k)$ such that
  \begin{align*}
    \frac{\rho'(x_{k+1})-\rho'(x_{k-1})}{\delta} 
    &= \frac{\rho'(x_{k+1})-\rho'(x_k)}\delta + \frac{\rho'(x_k)-\rho'(x_{k-1})}\delta \\
    &= \rho''(\tilde x_k^+)\frac{x_{k+1}-x_k}\delta + \rho''(\tilde x_k^+)\frac{x_{k+1}-x_k}\delta
    = \frac{\rho''(\tilde x_k^+)}{z_\kph} + \frac{\rho''(\tilde x_k^-)}{z_\kmh}.
  \end{align*}
  For each $k\in\ivalp$, we have that --- recalling \eqref{eq:dummy808} ---
  \begin{align*}
    &\frac{z_\kph+z_\kmh}2 \Big(\frac{\rho''(\tilde x_k^+)}{z_\kph} + \frac{\rho''(\tilde x_k^-)}{z_\kmh}\Big) 
    - \frac2\delta\int_{\xi_\kmh}^{\xi_\kph}\rho''\circ\theX_\Delta\dd\xi \\
    &= \frac12\Big[\Big(\frac{z_\kmh}{z_\kph}+1\Big) \rho''(\tilde x_k^+) + \Big(\frac{z_\kph}{z_\kmh}+1\Big) \rho''(\tilde x_k^-)\Big]
    - \frac2\delta\int_{\xi_\kmh}^{\xi_\kph}\rho''\circ\theX_\Delta\dd\xi \\
    &= \frac12\Big[\Big(\frac{z_\kmh}{z_\kph}-1\Big) \rho''(\tilde x_k^+) + \Big(\frac{z_\kph}{z_\kmh}-1\Big) \rho''(\tilde x_k^-)\Big] \\
    &\qquad - \frac2\delta\int_{\xi_k}^{\xi_\kph}\big[\rho''\circ\theX_\Delta-\rho''(\tilde x_k^+)\big]\dd\xi
    - \frac2\delta\int_{\xi_\kmh}^{\xi_k}\big[\rho''\circ\theX_\Delta-\rho''(\tilde x_k^-)\big]\dd\xi.
  \end{align*}
  Since $\theX_\Delta(\xi)\in[x_k,x_\kph]$ for each $\xi\in[\xi_k,\xi_\kph]$, and $\tilde x_k^+\in[x_k,x_{k+1}]$, 
  it follows that $|\theX_\Delta(\xi)-\tilde x_k^+|\le x_{k+1}-x_k$,
  and therefore
  \begin{align}
    \label{eq:rhotox}
    \frac2\delta\int_{\xi_k}^{\xi_\kph}\big|\rho''\circ\theX_\Delta(\xi)-\rho''(\tilde x_k^+)\big|\dd\xi
    \le B(x_{k+1}-x_k).
  \end{align}
  A similar estimate holds for the other integral.
  Thus
  \begin{align*}
    R_{1b} \le B^2\tau\sum_{n=1}^N\delta \sum_{k\in\ivalp}\Big[\Big(\frac{z^n_\kmh}{z^n_\kph}-1\Big)^2 + \Big(\frac{z^n_\kph}{z^n_\kmh}-1\Big)^2 + 2(x^n_{k+1}-x^n_{k-1})^2\Big].
  \end{align*}
  Recalling the estimates \eqref{eq:oscillation} and \eqref{eq:xpower},
  we further conclude that
  \begin{align}
    \label{eq:R1bbound}
    R_{1b} \le B^2\big(6(b-a)^2(\olH T\delta)^{1/2}+4T(b-a)^2\delta\big).
  \end{align}
  In combination with \eqref{eq:R1} and \eqref{eq:R1a},
  this proves the claim.
\end{proof}
\begin{lem}
  There is a constant $C_2>0$ 
  such that for each $N$ with $N\tau<T$, 
  one has
  \begin{align*}
    R_2:=\tau\sum_{n=1}^N\bigg|A^n_2 -\int_0^M \partial_\xi\hatz^n_\Delta(\xi)^2\rho''\circ\theX^n_\Delta(\xi)\dd\xi\bigg|
    \le C_2\delta^{1/4}.
  \end{align*}
\end{lem}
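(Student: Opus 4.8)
The plan is to follow the proof of Lemma~\ref{lem:R1} almost verbatim, the coefficient $\frac{(z_\kph)^2+(z_\kmh)^2}{2z_\kph z_\kmh}$ appearing in $A^n_2$ now playing the role of $\frac{z_\kph+z_\kmh}2\cdot\frac{\rho'(x_{k+1})-\rho'(x_{k-1})}{\delta}$ in $A^n_1$. First I would rewrite, exactly as at the start of that proof,
\begin{align*}
  \int_0^M \partial_\xi\hatz^n_\Delta(\xi)^2\,\rho''\circ\theX^n_\Delta(\xi)\dd\xi
  = \sum_{k\in\ivalp}\Big(\frac{z^n_\kph-z^n_\kmh}{\delta}\Big)^2\int_{\xi_\kmh}^{\xi_\kph}\rho''\circ\theX^n_\Delta(\xi)\dd\xi,
\end{align*}
using that $\partial_\xi\hatz^n_\Delta\equiv(z^n_\kph-z^n_\kmh)/\delta$ on $(\xi_\kmh,\xi_\kph)$ and vanishes on $(0,\xi_{1/2})\cup(\xi_{K-1/2},M)$. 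Subtracting this from $A^n_2$ and using the elementary identity $\frac{(z_\kph)^2+(z_\kmh)^2}{2z_\kph z_\kmh}=1+\frac{(z_\kph-z_\kmh)^2}{2z_\kph z_\kmh}$ together with $\int_{\xi_\kmh}^{\xi_\kph}\dd\xi=\delta$, the $k$-th summand of the difference takes the form
\begin{align*}
  \Big(\frac{z^n_\kph-z^n_\kmh}{\delta}\Big)^2\bigg[\int_{\xi_\kmh}^{\xi_\kph}\big(\rho''(x^n_k)-\rho''\circ\theX^n_\Delta(\xi)\big)\dd\xi
  + \frac{\delta\,(z^n_\kph-z^n_\kmh)^2}{2z^n_\kph z^n_\kmh}\,\rho''(x^n_k)\bigg].
\end{align*}
The first bracket term I would bound by $B\delta(x^n_{k+1}-x^n_{k-1})$, using $|\theX^n_\Delta(\xi)-x^n_k|\le x^n_{k+1}-x^n_{k-1}$ for $\xi\in(\xi_\kmh,\xi_\kph)$ and $\|\rho\|_{C^4}\le B$; the second by $\frac{B\delta}2\big(|z^n_\kph/z^n_\kmh-1|+|z^n_\kmh/z^n_\kph-1|\big)$, using $\frac{(z_\kph-z_\kmh)^2}{z_\kph z_\kmh}=\big(\frac{z_\kph}{z_\kmh}-1\big)+\big(\frac{z_\kmh}{z_\kph}-1\big)$.

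Multiplying these per-$k$ bounds by $(z^n_\kph-z^n_\kmh)^2/\delta^2$, summing over $k\in\ivalp$ and applying Cauchy--Schwarz in $k$ --- precisely the step used to estimate $R_{1b}$ in the proof of Lemma~\ref{lem:R1} --- would give
\begin{align*}
  \Big|A^n_2-\int_0^M\partial_\xi\hatz^n_\Delta(\xi)^2\,\rho''\circ\theX^n_\Delta(\xi)\dd\xi\Big|
  \le \sqrt2\,B\,\big\|\partial_\xi\hatz^n_\Delta\big\|_{L^4(\Omega)}^2
  \bigg(\delta\sum_{k\in\ivalp}\Big[2(x^n_{k+1}-x^n_{k-1})^2+\Big(\tfrac{z^n_\kph}{z^n_\kmh}-1\Big)^2+\Big(\tfrac{z^n_\kmh}{z^n_\kph}-1\Big)^2\Big]\bigg)^{1/2}.
\end{align*}
A further Cauchy--Schwarz in $n$ then bounds $R_2$ by $\sqrt2\,B$ times the product of $\big(\tau\sum_{n=1}^N\|\partial_\xi\hatz^n_\Delta\|_{L^4(\Omega)}^4\big)^{1/2}$, which is at most $(9\olH)^{1/2}$ by \eqref{eq:L4bound}, and the square root of $\tau\sum_{n=1}^N\delta\sum_{k\in\ivalp}\big[2(x^n_{k+1}-x^n_{k-1})^2+(z^n_\kph/z^n_\kmh-1)^2+(z^n_\kmh/z^n_\kph-1)^2\big]$. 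This last quantity is exactly the bracket already estimated for $R_{1b}$ in the proof of Lemma~\ref{lem:R1}, hence is bounded by $6(b-a)^2(\olH T\delta)^{1/2}+4T(b-a)^2\delta$ via \eqref{eq:oscillation} and \eqref{eq:xpower}, which is $\le C\delta^{1/2}$ since $\delta<1$. Assembling the estimates yields $R_2\le C_2\delta^{1/4}$ with $C_2$ depending only on $b-a$, $T$, $B$ and $\olH$.

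The whole decay in $\delta$ comes from the weak oscillation estimate \eqref{eq:oscillation}, whose right-hand side carries the factor $\delta^{1/2}$ (the terms with $(x^n_{k+1}-x^n_{k-1})^2$ contribute an even smaller $O(\delta)$). I do not foresee a genuine obstacle here: the argument is a routine variant of Lemma~\ref{lem:R1}. The only point requiring care is the splitting of $\frac{(z_\kph)^2+(z_\kmh)^2}{2z_\kph z_\kmh}$ into its leading value $1$ --- which pairs with the quadrature error $\int_{\xi_\kmh}^{\xi_\kph}\big(\rho''(x^n_k)-\rho''\circ\theX^n_\Delta\big)$ --- plus a remainder controlled \emph{linearly} by the oscillation ratios $z_\kph/z_\kmh-1$ and $z_\kmh/z_\kph-1$; controlling that remainder only quadratically would, through \eqref{eq:weakoscillation}, merely give an $O(1)$ bound and be useless for the passage to the limit.
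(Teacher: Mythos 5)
Your proposal is correct and follows the paper's own proof essentially verbatim: the rewriting of the integral as a sum of $\left(\tfrac{z_\kph-z_\kmh}{\delta}\right)^2$-weighted quadrature errors, the decomposition $\tfrac{(z_\kph)^2+(z_\kmh)^2}{2z_\kph z_\kmh}=1+\tfrac12\big(\tfrac{z_\kph}{z_\kmh}-1\big)+\tfrac12\big(\tfrac{z_\kmh}{z_\kph}-1\big)$ (which you write equivalently via $(z_\kph-z_\kmh)^2/(z_\kph z_\kmh)$), and the Cauchy--Schwarz split into the $L^4$-bound \eqref{eq:L4bound} and the $R_{1b}$-type quantity controlled by \eqref{eq:oscillation} and \eqref{eq:xpower} are exactly what the paper does. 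Your closing remark about why the oscillation must be extracted linearly rather than quadratically is a correct and useful reading of the structure.
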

\begin{proof}
  The proof is almost identical to (and even easier than) the one for Lemma \ref{lem:R1} above.
  Again, we have a decomposition of the form
  \begin{align*}
    R_2 \le R_{2a}^{1/2}R_{2b}^{1/2},
  \end{align*}
  where $R_{2a}$ equals $R_{1a}$ from \eqref{eq:R1a}, 
  and
  \begin{align*}
    R_{2b} &= \tau\sum_{n=1}^N\delta\sum_{k\in\ivalp}\Big[\frac{z^n_\kph+z^n_\kmh}{2 z^n_\kph z^n_\kmh}\rho''(x^n_k)-\frac1\delta\int_{\xi_\kmh}^{\xi_\kph}\rho''\circ\theX^n_\Delta\dd\xi\Big]^2.
  \end{align*}
  By writing
  \begin{align*}
    \frac{(z^n_\kph)^2+(z^n_\kmh)^2}{2 z^n_\kph z^n_\kmh} = \frac12\Big(\frac{z^n_\kmh}{z^n_\kph}-1\Big) + \frac12\Big(\frac{z^n_\kph}{z^n_\kmh}-1\Big) + 1,
  \end{align*}
  and observing --- in analogy to \eqref{eq:rhotox} --- that
  \begin{align*}
    \frac1\delta\int_{\xi_\kmh}^{\xi_\kph}\big|\rho''\circ\theX_\Delta(\xi)-\rho''(x_k)\big|\dd\xi
    \le B(x_\kph-x_\kmh),
  \end{align*}
  we obtain the same bound on $R_{2b}$ as the one on $R_{1b}$ from \eqref{eq:R1bbound}.
\end{proof}
\begin{lem}
  There is a constant $C_3>0$ 
  such that for each $N$ with $N\tau\le T$, 
  one has
  \begin{align*}
    R_3:=\tau\sum_{n=1}^N\left|A^n_3-\frac12\int_0^M \partial_\xi\hatz^n_\Delta(\xi)\rho'''\circ\theX^n_\Delta(\xi)\dd\xi\right|
    \le C_3\delta^{1/4}.
  \end{align*}
\end{lem}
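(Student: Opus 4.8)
The plan is to imitate the proof of Lemma~\ref{lem:R1}, but expanding $\rho'$ to \emph{second} order. For each $k\in\ivalp$ a Taylor expansion with Lagrange remainder at an intermediate point $\tilde x_k^+\in(x^n_k,x^n_{k+1})$ gives
\begin{align*}
  \rho'(x^n_{k+1})-\rho'(x^n_k)-(x^n_{k+1}-x^n_k)\rho''(x^n_k)=\tfrac12(x^n_{k+1}-x^n_k)^2\rho'''(\tilde x_k^+),
\end{align*}
and since $x^n_{k+1}-x^n_k=\delta/z^n_\kph$, the $k$-th summand of $A^n_3$ equals $\tfrac\delta4\bigl(\tfrac{z^n_\kph-z^n_\kmh}{\delta}\bigr)\bigl(1+(z^n_\kmh/z^n_\kph)^2\bigr)\rho'''(\tilde x_k^+)$. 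On the other hand, $\partial_\xi\hatz^n_\Delta$ is constant, equal to $\tfrac{z^n_\kph-z^n_\kmh}{\delta}$, on $(\xi_\kmh,\xi_\kph)$ and vanishes on the two boundary intervals, so the $k$-th piece of the target integral is $\tfrac\delta2\bigl(\tfrac{z^n_\kph-z^n_\kmh}{\delta}\bigr)\cdot\tfrac1\delta\int_{\xi_\kmh}^{\xi_\kph}\rho'''\circ\theX^n_\Delta\dd\xi$. I will subtract $2\rho'''(\tilde x_k^+)$ from the coefficient $1+(z^n_\kmh/z^n_\kph)^2$ and from the quadrature factor $\tfrac2\delta\int_{\xi_\kmh}^{\xi_\kph}\rho'''\circ\theX^n_\Delta\dd\xi$ simultaneously.

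This splits the level-$k$ error into a coefficient-mismatch term proportional to $\bigl((z^n_\kmh/z^n_\kph)^2-1\bigr)\rho'''(\tilde x_k^+)$ and a quadrature error $\tfrac2\delta\int_{\xi_\kmh}^{\xi_\kph}\bigl[\rho'''\circ\theX^n_\Delta-\rho'''(\tilde x_k^+)\bigr]\dd\xi$. For the quadrature error I would use that $\theX^n_\Delta$ is monotone and piecewise affine with $\theX^n_\Delta(\xi_\kmh)=x^n_\kmh$, $\theX^n_\Delta(\xi_\kph)=x^n_\kph$, hence both $\theX^n_\Delta(\xi)$ (for $\xi\in[\xi_\kmh,\xi_\kph]$) and $\tilde x_k^+$ lie in $[x^n_{k-1},x^n_{k+1}]$, so by the $B$-Lipschitz bound on $\rho'''$ from \eqref{eq:smoothbound} this term is at most $2B(x^n_{k+1}-x^n_{k-1})$, exactly as in \eqref{eq:rhotox}. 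For the coefficient mismatch I would use the elementary inequality $|r^2-1|\le(r-1)^2+2|r-1|$ with $r=z^n_\kmh/z^n_\kph$. Together this yields the pointwise-in-$n$ bound
\begin{align*}
  \Bigl|A^n_3-\tfrac12\textstyle\int_0^M\partial_\xi\hatz^n_\Delta\,\rho'''\circ\theX^n_\Delta\dd\xi\Bigr|
  \le \tfrac{B\delta}4\sum_{k\in\ivalp}\Bigl|\tfrac{z^n_\kph-z^n_\kmh}{\delta}\Bigr|\Bigl[\bigl(\tfrac{z^n_\kmh}{z^n_\kph}-1\bigr)^2+2\bigl|\tfrac{z^n_\kmh}{z^n_\kph}-1\bigr|+2(x^n_{k+1}-x^n_{k-1})\Bigr].
\end{align*}

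It remains to multiply by $\tau$, sum over $n=1,\dots,N$ with $N\tau\le T$, and show the result is $O(\delta^{1/4})$. I expect this to be the delicate step: the bare $L^4$-estimate \eqref{eq:L4bound} only gives boundedness, so each of the three contributions must be routed through an estimate that already carries a positive power of $\delta$. For the first contribution I would bound $\bigl(\tfrac{z^n_\kmh}{z^n_\kph}-1\bigr)^2\le(b-a)\,\bigl|\tfrac{z^n_\kph-z^n_\kmh}{\delta}\bigr|\,\bigl|\tfrac{z^n_\kmh}{z^n_\kph}-1\bigr|$ (using $z^n_\kph\ge\delta/(b-a)$), keep one oscillation factor, and apply Cauchy--Schwarz in $k$ and then in $n$ to combine \eqref{eq:L4bound} with the $\delta^{1/2}$-small estimate \eqref{eq:oscillation}, producing $\delta^{1/4}$. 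The second contribution is treated analogously, except that one must first bound $\delta\sum_k\bigl|\tfrac{z^n_\kph-z^n_\kmh}{\delta}\bigr|^2\le M^{1/2}\|\partial_\xi\hatz^n_\Delta\|_{L^4(\Omega)}^2$ rather than identify it with $2\Fz(\xvec^n_\Delta)$ --- the latter is \emph{not} $\Delta$-uniformly bounded on the time window in question --- and then pair with \eqref{eq:oscillation} again. The third contribution is plain Cauchy--Schwarz using \eqref{eq:xpower} and \eqref{eq:L4bound}, and gives a factor $\delta^{1/2}\le\delta^{1/4}$ for $\delta<1$. Adding up the three bounds produces the asserted constant $C_3$.
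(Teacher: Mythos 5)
Your argument is correct, and it reaches the same $\delta^{1/4}$-bound as the paper's proof, but by a technically different route. The paper opens with a single H\"older split $R_3\le R_{3a}^{1/4}R_{3b}^{3/4}$ with exponents $(4,4/3)$, pulling the $L^4$-factor out once and pushing the entire remaining discrepancy into a $4/3$-power sum $R_{3b}$; then it factorizes $r^2-1=(r+1)(r-1)$ (with $r=z^n_\kmh/z^n_\kph$) and applies H\"older again with exponents $(3,3/2)$, invoking \eqref{eq:weakoscillation} to control the $(r+1)^4$-sum. You instead keep the estimate \emph{additive}: the elementary bound $|r^2-1|\le(r-1)^2+2|r-1|$ and the quadrature remainder give three contributions, each of which you close with plain Cauchy--Schwarz, fed by \eqref{eq:L4bound}, \eqref{eq:oscillation} and \eqref{eq:xpower}. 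Your extra trick for the quadratic term, namely $(r-1)^2\le(b-a)\,|z^n_\kph-z^n_\kmh|/\delta\cdot|r-1|$, converts one factor back into a difference quotient so that the $L^4$-bound can absorb it --- this plays the same role as the paper's use of \eqref{eq:weakoscillation}. You also correctly notice the subtle point that $\tau\sum_n\delta\sum_k\bigl(\tfrac{z^n_\kph-z^n_\kmh}{\delta}\bigr)^2=2\tau\sum_n\Fz(\xvec_\Delta^n)$ is not directly $\Delta$-uniformly controllable by hypothesis, and route it through $\|\cdot\|_{L^2}^2\le M^{1/2}\|\cdot\|_{L^4}^2$ plus a Cauchy--Schwarz in $n$; the paper's $4/3$-exponent never produces that term, which is the main thing the H\"older split buys. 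Your version avoids fractional exponents and never needs \eqref{eq:weakoscillation}, at the cost of this one extra interpolation step; in the end it is slightly more elementary and equally sharp.
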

\begin{proof}
  Arguing like in the previous proofs, we first deduce 
  --- now by means of H\"older's inequality instead of the Cauchy-Schwarz inequality --- 
  that
  \begin{align*}
    R_3 \le R_{3a}^{1/4}R_{3b}^{3/4},
  \end{align*}
  where $R_{3a}=R_{1a}$, and
  \begin{align*}
    R_{3b} = \tau\sum_{n=1}^N \delta\sum_{k\in\ivalp}\Bigg|\left(\frac{(z^n_\kph)^2+(z^n_\kmh)^2}2\right)
    \left(\frac{\rho'(x^n_{k+1})-\rho'(x^n_k)-(x^n_{k+1}-x^n_k)\rho''(x^n_k)}{\delta^2}\right) \\
    -\frac1{2\delta}\int_{\xi_\kmh}^{\xi_\kph}\rho'''\circ\theX^n_\delta\dd\xi\Bigg|^{4/3}.
  \end{align*}
  Introduce intermediate values $\tilde x_k^+$ such that
  \begin{align*}
    \rho'(x^n_{k+1})-\rho'(x^n_k)-(x^n_{k+1}-x^n_k)\rho''(x^n_k)
    =\frac12(x^n_{k+1}-x^n_k)^2\rho'''(\tilde x_k^+)
    =\frac{\delta^2}{2(z^n_\kph)^2}\rho'''(\tilde x_k^+).
  \end{align*}
  Thus we have that
  \begin{align*}
    &\left(\frac{(z^n_\kph)^2+(z^n_\kmh)^2}2\right)
    \left(\frac{\rho'(x^n_{k+1})-\rho'(x^n_k)-(x^n_{k+1}-x^n_k)\rho''(x^n_k)}{\delta^2}\right)-\frac1{2\delta}\int_{\xi_\kmh}^{\xi_\kph}\rho'''\circ\theX^n_\delta\dd\xi \\
    &= \frac14\left(\left(\frac{z^n_\kmh}{z^n_\kph}\right)^2+1\right)\rho'''(\tilde x_k^+) -\frac1{2\delta}\int_{\xi_\kmh}^{\xi_\kph}\rho'''\circ\theX^n_\delta\dd\xi \\
    &= \frac14\left(\frac{z^n_\kmh}{z^n_\kph}+1\right)\left(\frac{z^n_\kmh}{z^n_\kph}-1\right)\rho'''(\tilde x_k^+) -\frac1{2\delta}\int_{\xi_\kmh}^{\xi_\kph}\big[\rho'''\circ\theX^n_\Delta-\rho'''(\tilde x_k^+)\big]\dd\xi.
  \end{align*}
  By the analogue of \eqref{eq:rhotox}, it follows further that
  \begin{align*}
    R_{3b} &\le 2B^{4/3}\tau\sum_{n=1}^N \delta\sum_{k\in\ivalp}\left[\left(\frac{z^n_\kmh}{z^n_\kph}+1\right)^{4/3}\left(\frac{z^n_\kmh}{z^n_\kph}-1\right)^{4/3}+(x^n_{k+1}-x^n_{k-1})^{4/3}\right] \\
    &\le 2B^{4/3}\left(\tau\sum_{n=1}^N \delta\sum_{k\in\ivalp}\left(\frac{z^n_\kmh}{z^n_\kph}+1\right)^4\right)^{1/3}
    \left(\tau\sum_{n=1}^N\delta\sum_{k\in\ivalp}\left(\frac{z^n_\kmh}{z^n_\kph}-1\right)^2\right)^{2/3} \\
    & + 2B^{4/3}T(b-a)^{4/3}\delta,
  \end{align*}
  where we have used \eqref{eq:xpower}.
  At this point, the estimates \eqref{eq:weakoscillation} and \eqref{eq:oscillation} are used to control
  the first and the second sum, respectively.
\end{proof}
Along the same lines, one proves the analogous estimate for $A_4$ in place of $A_3$.

It remains to identify the integral expressions inside $R_1$ to $R_3$ with those in the weak formulation \eqref{eq:weakgoal}.
\begin{lem}
  One has that
  \begin{align}
    \label{eq:cov1}
    &\int_0^M \partial_\xi\hatz_\Delta^n(\xi)\rho'''\circ\theX^n_\Delta(\xi)\dd\xi 
    = \intom \partial_x\hatu_\Delta^n(x)\rho'''(x)\dd x, \\
    \label{eq:cov2}
    &R_5:=\tau\sum_{n=1}^N\left|\int_0^M \partial_\xi\hatz_\Delta^n(\xi)^2\rho''\circ\theX^n_\Delta(\xi)\dd\xi 
      - 4\intom \left(\partial_x\sqrt{\hatu_\Delta^n}\right)^2(x)\rho''(x)\dd x\right|
    \le C_5\delta^{1/4},
 \end{align}
 where \eqref{eq:cov2} holds for each $N$ with $N\tau\le T$.
\end{lem}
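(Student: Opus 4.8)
The plan is to obtain both identities from the single change of variables $x=\theX_\Delta^n(\xi)$, together with the relation $\hatu_\Delta^n\circ\theX_\Delta^n=\hatz_\Delta^n$ of \eqref{eq:locaffine0} and the piecewise-affine structure of the functions involved. For \eqref{eq:cov1}, I would partition $[0,M]$ along the nodes $\{\xi_j\colon j\in\ival\cup\hval\}$; on the interior of each resulting subinterval both $\theX_\Delta^n$ and $\hatz_\Delta^n$ are affine, $\theX_\Delta^n$ maps the subinterval affinely onto a subinterval of $[a,b]$ on which $\hatu_\Delta^n$ is affine, and the chain rule $\partial_\xi\hatz_\Delta^n=\bigl(\partial_x\hatu_\Delta^n\circ\theX_\Delta^n\bigr)\,\partial_\xi\theX_\Delta^n$ holds pointwise there. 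Applying the change of variables for the strictly increasing Lipschitz bijection $\theX_\Delta^n$ on each piece and summing then gives
\begin{align*}
  \int_0^M \partial_\xi\hatz_\Delta^n(\xi)\,\rho'''\circ\theX_\Delta^n(\xi)\dd\xi
  = \int_0^M \bigl(\partial_x\hatu_\Delta^n\circ\theX_\Delta^n\bigr)(\xi)\,\rho'''\circ\theX_\Delta^n(\xi)\,\partial_\xi\theX_\Delta^n(\xi)\dd\xi
  = \intom \partial_x\hatu_\Delta^n(x)\,\rho'''(x)\dd x,
\end{align*}
which is \eqref{eq:cov1}; the two boundary pieces contribute nothing, since there $\partial_\xi\hatz_\Delta^n\equiv0$ while $\partial_x\hatu_\Delta^n$ vanishes on $(a,x_\imh)$ and $(x_\Kmh,b)$.

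For \eqref{eq:cov2}, I would first invoke the same change of variables used in the proof of Proposition~\ref{prp:convergence2} to write
\begin{align*}
  4\intom\bigl(\partial_x\sqrt{\hatu_\Delta^n}\bigr)^2(x)\,\rho''(x)\dd x
  = \int_0^M \frac{\partial_\xi\hatz_\Delta^n(\xi)^2}{\hatz_\Delta^n(\xi)\,\partial_\xi\theX_\Delta^n(\xi)}\,\rho''\circ\theX_\Delta^n(\xi)\dd\xi .
\end{align*}
The $n$-th summand of $R_5$ is then the modulus of $\int_0^M \partial_\xi\hatz_\Delta^n(\xi)^2\bigl(1-(\hatz_\Delta^n\partial_\xi\theX_\Delta^n)^{-1}(\xi)\bigr)\rho''\circ\theX_\Delta^n(\xi)\dd\xi$, which by \eqref{eq:smoothbound} is at most $B\int_0^M \partial_\xi\hatz_\Delta^n(\xi)^2\,\bigl|1-(\hatz_\Delta^n\partial_\xi\theX_\Delta^n)^{-1}(\xi)\bigr|\dd\xi$.

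The core step is to bound $1-(\hatz_\Delta^n\partial_\xi\theX_\Delta^n)^{-1}$ pointwise. On $(\xi_\kmh,\xi_\kph)$ with $k\in\ivalp$ one has $\partial_\xi\hatz_\Delta^n\equiv(z^n_\kph-z^n_\kmh)/\delta$, while $\partial_\xi\theX_\Delta^n$ equals $1/z^n_\kmh$ on $(\xi_\kmh,\xi_k)$ and $1/z^n_\kph$ on $(\xi_k,\xi_\kph)$. Plugging in the explicit affine interpolant $\hatz_\Delta^n$ gives $\hatz_\Delta^n\partial_\xi\theX_\Delta^n=1+t\,(z^n_\kph/z^n_\kmh-1)$ with $t=(\xi-\xi_\kmh)/\delta\in(0,\tfrac12)$ on $(\xi_\kmh,\xi_k)$, and $\hatz_\Delta^n\partial_\xi\theX_\Delta^n=1+s\,(z^n_\kmh/z^n_\kph-1)$ with $s=(\xi_\kph-\xi)/\delta\in(0,\tfrac12)$ on $(\xi_k,\xi_\kph)$; in particular $\hatz_\Delta^n\partial_\xi\theX_\Delta^n\ge\tfrac12$ throughout $(\xi_\kmh,\xi_\kph)$. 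Hence $\bigl|1-(\hatz_\Delta^n\partial_\xi\theX_\Delta^n)^{-1}\bigr|\le 2\bigl|\hatz_\Delta^n\partial_\xi\theX_\Delta^n-1\bigr|\le\bigl|z^n_\kph/z^n_\kmh-1\bigr|+\bigl|z^n_\kmh/z^n_\kph-1\bigr|$ on $(\xi_\kmh,\xi_\kph)$, and since that interval has length $\delta$,
\begin{align*}
  \int_0^M \partial_\xi\hatz_\Delta^n(\xi)^2\,\bigl|1-(\hatz_\Delta^n\partial_\xi\theX_\Delta^n)^{-1}(\xi)\bigr|\dd\xi
  \le \delta\sum_{k\in\ivalp}\Bigl(\frac{z^n_\kph-z^n_\kmh}{\delta}\Bigr)^2\Bigl[\Bigl|\frac{z^n_\kph}{z^n_\kmh}-1\Bigr|+\Bigl|\frac{z^n_\kmh}{z^n_\kph}-1\Bigr|\Bigr].
\end{align*}

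To finish, I would multiply by $\tau$, sum over $n=1,\dots,N$ with $N\tau\le T$, and apply the Cauchy--Schwarz inequality twice: first in $k$, separating the factor $\bigl(\delta\sum_k((z^n_\kph-z^n_\kmh)/\delta)^4\bigr)^{1/2}$ from $\bigl(\delta\sum_k[\,\cdots\,]^2\bigr)^{1/2}$, then in $n$. The former is controlled by $9\olH$ via \eqref{eq:L4bound}, the latter by \eqref{eq:oscillation} (after $(p+q)^2\le2p^2+2q^2$), which supplies the extra $\delta^{1/2}$; taking square roots yields $R_5\le C_5\delta^{1/4}$. The step I expect to be the real obstacle is precisely this core pointwise estimate: $\hatz_\Delta^n$ has its breakpoints at the half-integer nodes and $\theX_\Delta^n$ at the integer nodes, so the product $\hatz_\Delta^n\partial_\xi\theX_\Delta^n$ is affine only on the common refinement, and one must check carefully that the crude bound $(\hatz_\Delta^n\partial_\xi\theX_\Delta^n)^{-1}\le z^n_\kph/z^n_\kmh+z^n_\kmh/z^n_\kph$ used earlier does not suffice here --- the bound on $\bigl|1-(\hatz_\Delta^n\partial_\xi\theX_\Delta^n)^{-1}\bigr|$ must be genuinely linear in the oscillation of $z$ (it must vanish when $z$ is locally constant) for \eqref{eq:oscillation} to force the error to zero.
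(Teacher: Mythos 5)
Your proposal is correct and follows essentially the same route as the paper: a change of variables via $\hatz_\Delta^n=\hatu_\Delta^n\circ\theX_\Delta^n$ reduces both identities to estimating $\int\partial_\xi\hatz_\Delta^n(\xi)^2\bigl|1-(\hatz_\Delta^n\partial_\xi\theX_\Delta^n)^{-1}\bigr|\dd\xi$, which is then controlled by Cauchy--Schwarz together with \eqref{eq:L4bound} and \eqref{eq:oscillation}. The only cosmetic difference is in the pointwise bound on $|1-(\hatz_\Delta^n\partial_\xi\theX_\Delta^n)^{-1}|$: the paper uses directly that $\hatz_\Delta^n\in[z_\kmh^n,z_\kph^n]$ while $\partial_\xi\theX_\Delta^n\in\{1/z_\kmh^n,1/z_\kph^n\}$, whereas you compute the affine interpolant explicitly and use the lower bound $\hatz_\Delta^n\partial_\xi\theX_\Delta^n\ge\tfrac12$ together with $|1-1/y|\le2|y-1|$; both give a bound linear in the oscillations $|z_\kph^n/z_\kmh^n-1|$, $|z_\kmh^n/z_\kph^n-1|$, and your closing remark about why the cruder bound $(\hatz_\Delta^n\partial_\xi\theX_\Delta^n)^{-1}\le z_\kph^n/z_\kmh^n+z_\kmh^n/z_\kph^n$ from Proposition \ref{prp:convergence2} would not suffice is exactly the right observation.
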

\begin{proof}
  The starting point is relation \eqref{eq:locaffine0}, that is 
  \begin{align}
    \label{eq:locaffine2}
    \hatz_\Delta^n(\xi)=\hatu_\Delta^n\circ\theX_\Delta^n(\xi)     
  \end{align}
  for all $\xi\in[0,M]$.
  Both sides of this equation are Lipschitz continuous in $\xi$, 
  and are differentiable except possibly at $\xi_{\frac12},\xi_1,\ldots,\xi_{K-\frac12}$.
  At points $\xi$ of differentiability, we have that
  \begin{align*}
    \partial_\xi\hatz_\Delta^n(\xi) =
    \partial_x\hatu_\Delta^n\circ\theX_\Delta^n(\xi)\partial_\xi\theX_\Delta^n(\xi).
  \end{align*}
  Substitute this expression for $\partial_\xi\hatz_\Delta^n(\xi)$ into the left-hand side of \eqref{eq:cov1},
  and perform a change of variables $x=\theX_\Delta^n(\xi)$ to obtain the integral on the right.
  
  Next, take the square root in \eqref{eq:locaffine2} before differentiation, 
  then calculate the square and divide by $\partial_\xi\theX_\Delta^n(\xi)$ afterwards:
  \begin{align*}
    \frac{\partial_\xi \hatz_\Delta^n(\xi)^2}{4\hatz_\Delta^n(\xi)\partial_\xi\theX_\Delta^n(\xi)} =
    \big(\partial_x\sqrt{\hatu_\Delta^n}\big)^2\circ\theX_\Delta^n(\xi)\partial_\xi\theX_\Delta^n(\xi).
  \end{align*}
  Performing the same change of variables as before, this proves that
  \begin{align}
    \label{eq:dummy201}
    \int_0^M \frac{\partial_\xi \hatz_\Delta^n(\xi)^2}{\hatz_\Delta^n(\xi)\partial_\xi\theX_\Delta^n(\xi)} \rho''\circ\theX_\Delta^n(\xi) \dd\xi
    = 4 \intom \left(\partial_x\sqrt{\hatu_\Delta^n}\right)^2(x)\rho''(x)\dd x.
  \end{align}
  It remains to estimate the difference between the $\xi$-integrals in \eqref{eq:cov2} and in \eqref{eq:dummy201}, respectively.
  To this end, observe that for each $\xi\in(\xi_k,\xi_\kph)$ with some $k\in\ivalp$, 
  one has $\partial_\xi\theX_\Delta^n(\xi)=1/z^n_\kph$ and $\hatz_\Delta(\xi)\in[z_\kmh,z_\kph]$.
  Hence, for those $\xi$,
  \begin{align*}
    \left|1-\frac1{\hatz_\Delta^n(\xi)\partial_\xi\theX_\Delta^n(\xi)}\right| \le \left|1-\frac{z^n_\kph}{z^n_\kmh}\right|.
  \end{align*}
  If instead $\xi\in(\xi_\kmh,\xi_k)$, then this estimate holds with the roles of $z^n_\kph$ and $z^n_\kmh$ interchanged.
  Consequently, using once again \eqref{eq:L4bound} and \eqref{eq:oscillation},
  \begin{align*}
    &\tau\sum_{n=1}^N \left|\int_0^M \partial_\xi\hatz_\Delta^n(\xi)^2\rho''\circ\theX^n_\Delta(\xi)\dd\xi
      -\int_0^M \frac{\partial_\xi\hatz_\Delta^n(\xi)^2}{\hatz_\Delta^n(\xi)\partial_\xi\theX_\Delta^n(\xi)}\rho''\circ\theX^n_\Delta(\xi)\dd\xi\right|\\
    &\le B\tau\sum_{n=1}^N\int_0^M \partial_\xi\hatz_\Delta^n(\xi)^2\left|1-\frac1{\hatz_\Delta^n(\xi)\partial_\xi\theX_\Delta^n(\xi)}\right|\dd\xi\\
    &\le B\left(\tau\sum_{n=1}^\infty\|\partial_\xi\hatz_\Delta^n\|_{L^4}^4\right)^{1/2}
    \left(\tau\sum_{n=1}^N\delta\sum_{k\in\ivalp}\left[\left(1-\frac{z^n_\kph}{z^n_\kmh}\right)^2+\left(1-\frac{z^n_\kph}{z^n_\kmh}\right)^2\right]\right)^{1/2} \\
    &\le 3\olH^{1/2}\big(6(b-a)^2T^{1/2}\olH^{1/2}\delta^{1/2}\big)^{1/2},
  \end{align*}
  since $N\tau\le T$ by hypothesis.
  This shows \eqref{eq:cov2}.
\end{proof}
\begin{proof}[Proof of \eqref{eq:weakform2}]
  Again, let $N_\tau\in\setN$ be such that $N_\tau\tau\in(T,T+1)$.
  Combining the discrete weak formulation \eqref{eq:fourAs},
  the change of variables formulae \eqref{eq:cov1}\&\eqref{eq:cov2},
  and the definitions of $R_1$ to $R_5$,
  it follows that
  \begin{align*}
    \err_{2,\Delta}\le BR_5 + B\tau\sum_{n=1}^{N_\tau}\left|
      \int_0^M\left[\partial_\xi\hatz_\Delta^n\rho'''\circ\theX_\Delta^n(\xi) + \partial_\xi\hatz_\Delta^n(\xi)^2\rho''\circ\theX_\Delta^n(\xi)\right]\dd\xi
      - \big(A_1^n-A_2^n+A_3^n+A_4^n\big)\right| \\
    \le B(R_1+R_2+R_3+R_4+R_5) \le B(C_1+C_2+C_3+C_4+C_5)\delta^{1/4}.
  \end{align*}
  This implies the desired inequality \eqref{eq:weakform2}.
\end{proof}
We are now going to finish the proof of this section's main result. 
\begin{proof}[Proof of Proposition \ref{prp:weakgoal}]
  Thanks to \eqref{eq:weakform1}\&\eqref{eq:weakform2}, we know that
  \begin{align*}
    &\Bigg|\int_0^T \psi'(t)\intom \rho(x)\ti{\baru_\Delta}(t;x) \dd x\dd t
    +\psi(0)\intom\rho(x)\baru_\Delta^0(x)\dd x \\
    &+\int_0^T \psi(t)\intom\big[\rho'''(x)\partial_x\ti{\hatu_\Delta}(t;x)+4\rho''(x)\partial_x\ti{\sqrt{\hatu_\Delta}}(t;x)^2\big] \dd x\dd t\Bigg| \\
    &\le \err_{1,\Delta} + \err_{2,\Delta} \le C\big((\tau\Fz(\xvec_\Delta^0))+(\delta\Fz(\xvec_\Delta^0))^{1/2}+\delta^{1/4}\big).
  \end{align*}
  By our assumption \eqref{eq:Fbound} on $\Fz(\xvec_\Delta^0)$, the expression on the right hand side vanishes as $\Delta\to0$.
  To obtain \eqref{eq:weakgoal} in the limit $\Delta\to0$, 
  we still need to show the convergence of the integrals to their respective limits.

  A technical tool is the observation that, for each $p\in[1,4]$,
  \begin{align*}
    Q_p:=\sup_\Delta\tau\sum_{n=1}^{N_\tau}\delta\sum_{\kappa\in\hval}(z^n_\kappa)^p < \infty,
  \end{align*}
  thanks to the estimates \eqref{eq:lohi} and \eqref{eq:L4bound}.
  For the first integral, we use that $\ti{\baru_\Delta}$ converges to $u_*$ w.r.t.\ $\wass$, locally uniformly with respect to $t\in(0,T)$.
  Thus clearly
  \begin{align*}
    \intom \rho(x)\ti{\baru_\Delta}(t;x) \dd x \to \intom \rho(x)u_*(t;x)\dd x
  \end{align*}
  for each $t\in(0,T)$.
  In order to pass to the limit with the time integral, we apply Vitali's theorem.
  To this end, observe that
  \begin{align*}
    \int_0^T \left|\psi'(t) \intom \rho(x)\ti{\baru_\Delta}(t;x) \dd x\right|^2\dd t
    & \le B^2(b-a)\tau\sum_{n=1}^{N_\tau}\intom \baru_\Delta^n(x)^2\dd x \\
    & = B^2(b-a)\tau\sum_{n=1}^{N_\tau}\delta\sum_{\kappa\in\hval}z^n_\kappa
    \le Q_1B^2(b-a).
  \end{align*}
  Next, using the strong convergence from \eqref{eq:strong}, it follows that
  \begin{align*}
    \partial_x\ti{\hatu_\Delta} = 2\ti{\sqrt{\hatu_\Delta}}\partial_x\sqrt{\hatu_\Delta}
    \to 2\sqrt{u_*}\partial_x\sqrt{u_*}=\partial_xu_*
  \end{align*}
  strongly in $L^1(\Omega)$, for almost every $t\in(0,T)$.
  Again, we apply Vitali's theorem to conclude convergence of the time integral,
  on grounds of the following estimate:
  \begin{align*}
    &\int_0^T \left|\psi(t)\intom\rho'''(x)\partial_x\ti{\hatu_\Delta}\dd x\right|^2\dd t
    \le B^2(b-a)\tau\sum_{n=1}^{N_\tau}\intom\big(\partial_x\hatu_\Delta^n(x)\big)^2\dd x \\
    & = B^2(b-a)\tau\sum_{n=1}^{N_\tau}\delta\sum_{k\in\ivalp}\left(\frac{z^n_\kph-z^n_\kmh}\delta\right)^2\left(\frac{z^n_\kph+z^n_\kmh}2\right) \\
    & \le B^2(b-a)\left(\tau\sum_{n=1}^\infty\sum_{k\in\ivalp}\left(\frac{z^n_\kph-z^n_\kmh}\delta\right)^4\right)^{1/2} 
    \left(\tau\sum_{n=1}^{N_\tau}\delta\sum_{\kappa\in\hval}(z^n_\kappa)^2\right)^{1/2}
    \le 3\olH^{1/2} Q_2^{1/2}B^2(b-a),
  \end{align*}
  where we have used \eqref{eq:L4bound}.
  Finally, the strong convergence implies \eqref{eq:strong} also implies that
  \begin{align*}
    \big(\partial_x\ti{\hatu_\Delta}\big)^2 \to \big(\partial_x\sqrt{u_*}\big)^2
  \end{align*}
  strongly in $L^1(\Omega)$, for almost every $t\in(0,T)$.
  One more time, we invoke Vitali's theorem, using that
  \begin{align*}
    &\int_0^T\left|\psi(t)\intom\rho''(x)\partial_x\ti{\sqrt{\hatu_\Delta}}^2(t;x)\dd x\right|^2\dd t
    \le B^2\tau\sum_{n=1}^{N_\tau} \intom \left(\partial_x\sqrt{\hatu_\Delta^n}\right)^4(x)\dd x\\
    &\le \frac12B^2\tau\sum_{n=1}^{N_\tau}\delta\sum_{k\in\ivalp}\left(\frac{z^n_\kph-z^n_\kmh}\delta\right)^2
    \left[\left(1-\frac{z^n_\kph}{z^n_\kmh}\right)^2+\left(1-\frac{z^n_\kmh}{z^n_\kph}\right)^2\right]. \\
    &\le B^2\left(\tau\sum_{n=1}^\infty\delta\sum_{k\in\ivalp}\left(\frac{z^n_\kph-z^n_\kmh}\delta\right)^4\right)^{1/2}
    \left(\tau\sum_{n=1}^\infty\delta\sum_{k\in\ivalp}\left[\left(1-\frac{z^n_\kph}{z^n_\kmh}\right)^4+\left(1-\frac{z^n_\kmh}{z^n_\kph}\right)^4\right]\right)^{1/2}.
  \end{align*}
  The two terms in the last line are uniformly controlled in view of \eqref{eq:L4bound} and \eqref{eq:oscillation}, respectively.
\end{proof}

\section{Proof of Theorems \ref{thm:pre} and \ref{thm:main}}
\label{sec:proofs}
%
Below, we collect the results derived up to here to formally conclude the proofs of our main theorems.
\begin{proof}[Proof of Theorem \ref{thm:pre}]
  Well-posedness of the discrete scheme follows from Proposition \ref{prp:wellposed}.
  Positivity and mass conservation are immediate consequences of the construction:
  recall that $\bar u_\Delta=\cf_\theh[\xvec_\Delta]$, with $\cf_\theh$ defined in \eqref{eq:cf}.
  The monotonicity of $\HFz$ and $\Fz$ have been obtained 
  in Lemma \ref{lem:dFdH_bounded} and \ref{lem:dF_bounded}, respectively.

  It remains to show the exponential decay \eqref{eq:equilibrate} of $\HFz$.
  From (the proof of) Lemma \ref{lem:BVbound}, 
  it follows for each $n=1,2,\ldots$ that
  \begin{align}
    \label{eq:decay1}
    \HF(\baru_\Delta^n)-\HF(\baru_\Delta^{n-1})
    = \HFz(\xvec_\Delta^n)-\HFz(\xvec_\Delta^{n-1})
    \le -\frac\tau{10(b-a)} \tv{\partial_x\sqrt{\hatu_\Delta^n}}^2.
  \end{align}
  By the logarithmic Sobolev inequality on $\Omega$ 
  and thanks to the fact that $\partial_x\sqrt{\hatu_\Delta^n}(0)=0$,
  we further have that
  \begin{align}
    \label{eq:decay2}
    \HF(\hatu_\Delta^n) \le \frac{(b-a)^2}{2\pi^2}\intom\left(\sqrt{\hatu_\Delta^n}\right)^2(x)\dd x
    \le \frac{(b-a)^3}{2\pi^2} \tv{\partial_x\sqrt{\hatu_\Delta^n}}^2.
  \end{align}
  Now combine \eqref{eq:decay1} and \eqref{eq:decay2} with the estimate \eqref{eq:Hinterpol} from the Appendix 
  to conclude that
  \begin{align*}
    \left(1+\frac{\pi^2\tau}{5(b-a)^4}\right)\HF(\baru_\Delta^n) \le \HF(\baru_\Delta^{n-1}).
  \end{align*}
  From here, the claim \eqref{eq:equilibrate} is obtained by induction on $n$.
\end{proof}
\begin{proof}[Proof of Theorem \ref{thm:main}]
  Local uniform convergence of the $\ti{\baru_\Delta}$ to a continuous limit function $u_*$ 
  is part of the conclusions of Proposition \ref{prp:convergence1}, see \eqref{eq:reg1} and \eqref{eq:uniformLinfty}.
  The regularity $\sqrt{u_*}\in L^2(\setRnn;H^1(\Omega))$ has been observed in Proposition \ref{prp:convergence2}.
  The strong convergence stated in the same proposition implies that $\F(u_*)$ is ``almost monotone'':
  indeed, thanks to \eqref{eq:strong} we may assume 
  --- passing to a further subsequence with $\Delta\to0$ if necessary ---
  that
  \begin{align*}
   \ti{\sqrt{\hatu_\Delta}}(t)\to\sqrt{u_*(t)}\quad \text{strongly in $H^1(\Omega)$, for a.e. $t>0$},
  \end{align*}
  and therefore also
  \begin{align}
    \label{eq:auxF1}
    2\intom\ti{\partial_x\sqrt{\hatu_\Delta}}^2(t;x)\dd x \to \F(u_*(t)), \quad \text{for a.e. $t>0$}.
  \end{align}
  On the other hand, arguing just like in the proof of \eqref{eq:cov2}, it follows that
  \begin{align}
    \label{eq:auxF2}
    \int_0^\infty\left|\ti{\Fz(\xvec_\Delta)}-2\intom\ti{\partial_x\sqrt{\hatu_\Delta}}^2(t;x)\dd x\right|\dd t
    \le C\delta^{1/4}.
  \end{align}
  Now combine \eqref{eq:auxF1} and \eqref{eq:auxF2} with the fact 
  that $\ti{\Fz(\xvec_\Delta)}$ is decreasing in $t$, for each $\Delta$,
  and is $\Delta$-uniformly bounded above according to \eqref{eq:Fboundfine}.
  By Helly's selection principle, there exists a monotone $f:\setR_+\to\setR_+$ such that $\F(u_*(t))=f(t)$ for a.e.\ $t>0$.
  The proof of monotonicity for $t\mapsto\HF(u_*)$ is similar, but easier:
  here it suffices to use the local uniform convergence from \eqref{eq:uniformLinfty}.

  Finally, the weak formulation \eqref{eq:weakdlss} has been shown in Proposition \ref{prp:weakgoal}.
  Simply observe that any $\varphi\in C^\infty_c(\setRnn\times\Omega)$ can be approximated 
  by linear combinations of products $\psi(t)\rho(x)$ with functions $\psi\in C^\infty(\setRnn)$ and $\rho\in C^\infty(\Omega)$.
\end{proof}

\section{Numerical results and order of consistency}\label{sec:numeric}
%
The proof of convergence for our discretization given above is purely qualitative.
In this last section, we study quantitative aspects of the convergence.
First, we calculate the order of consistency for approximation of smooth and strictly positive solutions.
Second, we report on the observed order of convergence in several numerical experiments.

\subsection{Order of consistency}
The following proposition shows that our scheme is (formally) of first order in time and of second order in space.
\begin{prp}
  \label{prp:consist}
  Suppose that $\theX\in C^\infty(\setRnn\times[0,M])$ is a classical solution to
  \begin{align}
    \label{eq:cgf}
    \partial_t\theX = \partial_\xi\big(Z^2\partial_\xi^2Z\big),
  \end{align}
  which is further such that $Z=1/\partial_\xi\theX$ is smooth and strictly positive.
  Let $\Delta=(\tau;\theh)$ be a family of discretization parameters.
  Then the corresponding restrictions $(\xvec_\Delta)$ of $\theX$ to the respective meshes,
  given by $x^n_k:=X(n\tau;\xi_k)$ for $n\in\setN$ and $k\in\{1,\ldots,K\}$, 
  satisfy \eqref{eq:dgf} with an error $O(\delta^2)+O(\tau)$ as $\Delta\to0$. 
\end{prp}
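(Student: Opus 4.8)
To prove Proposition~\ref{prp:consist}, the plan is to substitute the grid restriction of the exact solution into the scheme \eqref{eq:dgf} and bound the resulting residual by $O(\tau+\delta^2)$, working throughout on a fixed time slab $[0,T]\times[0,M]$ so that all implicit constants may depend on $T$, on $b-a$, and on finitely many sup-norms of the derivatives of $\theX$ and of $Z=1/\partial_\xi\theX$ (both smooth, with $Z$ and $1/Z$ bounded by hypothesis), but not on $\Delta$. For the time difference this is routine: a Taylor expansion in $t$ gives $(x^n_k-x^{n-1}_k)/\tau=\partial_t\theX(n\tau;\xi_k)+O(\tau)$, and by \eqref{eq:cgf} the leading term equals $\partial_\xi\big(Z^2\partial_\xi^2 Z\big)(n\tau;\xi_k)$. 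So the whole task reduces to showing that the spatial right-hand side of \eqref{eq:dgf}, evaluated on the grid restriction of $\theX$, reproduces $\partial_\xi\big(Z^2\partial_\xi^2 Z\big)(n\tau;\xi_k)$ with an $O(\delta^2)$ error, uniformly for $k\in\ivalp$ and $n\tau\le T$.

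The first genuine step I would carry out is an expansion of the midpoint weights $z^n_{\ell-1/2}=\delta/(x^n_\ell-x^n_{\ell-1})$. Since $x^n_j=\theX(n\tau;\xi_j)$ and $\xi_\ell-\xi_{\ell-1}=\delta$, the denominator is a \emph{symmetric} difference of the smooth function $\theX(n\tau;\cdot)$ about the midpoint $\xi_{\ell-1/2}$, hence expands in \emph{even} powers of $\delta$: $(x^n_\ell-x^n_{\ell-1})/\delta=\partial_\xi\theX(\xi_{\ell-1/2})+\tfrac{\delta^2}{24}\partial_\xi^3\theX(\xi_{\ell-1/2})+O(\delta^4)$, with the $O(\delta^4)$ term itself the restriction of a smooth function up to $O(\delta^6)$. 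Taking reciprocals (legitimate since $\partial_\xi\theX$ is bounded away from $0$) yields a locally uniform expansion $z^n_\kappa=Z(n\tau;\xi_\kappa)+\delta^2\rho(n\tau;\xi_\kappa)+O(\delta^4)$ for all $\kappa\in\hval$, where $\rho$ is a \emph{smooth} function of $\xi$ and the remainder is again a smooth‑function restriction of amplitude $\delta^4$. I expect this ``smooth $O(\delta^2)$ structure'' to be the main obstacle and the heart of the argument: a generic $O(\delta^2)$ perturbation of the weights would, after being hit by the second difference ($/\delta^2$) and then by the outer first difference ($/\delta$) in \eqref{eq:dgf}, downgrade the consistency to merely $O(\delta)$; it is precisely because the correction is the restriction of a fixed smooth function that all the subsequent differencing reproduces it at its own order $\delta^2$ and no worse. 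The ghost values $z^n_{-1/2}=z^n_{1/2}$ and $z^n_{K+1/2}=z^n_{K-1/2}$ I would reconcile with this expansion using the no‑flux boundary conditions \eqref{eq:bc}, which force $\partial_\xi Z=0$ at $\xi=0$ and $\xi=M$; hence $Z(-\delta/2)=Z(\delta/2)+O(\delta^3)$, and the conventions agree with the expansion to the order needed, so no special treatment of the near‑boundary indices is required.

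With the weight expansion in hand, the rest is bookkeeping. Applying the centered second difference to $z^n_\kappa=Z(\xi_\kappa)+\delta^2\rho(\xi_\kappa)+O(\delta^4)$ and dividing by $\delta^2$ gives $\tfrac1{\delta^2}\big(z^n_{\kappa+1}-2z^n_\kappa+z^n_{\kappa-1}\big)=\partial_\xi^2 Z(\xi_\kappa)+\delta^2\beta(\xi_\kappa)+O(\delta^4)$ with $\beta$ smooth (the $\delta^2\rho$‑part contributing at order $\delta^2$, the $O(\delta^4)$‑part at order $\delta^4$, again by the even‑power expansion of symmetric differences of smooth functions); similarly $(z^n_\kappa)^2=Z(\xi_\kappa)^2+\delta^2\gamma(\xi_\kappa)+O(\delta^4)$ with $\gamma$ smooth. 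Multiplying, the quantity $G_\kappa:=(z^n_\kappa)^2\,\tfrac1{\delta^2}\big(z^n_{\kappa+1}-2z^n_\kappa+z^n_{\kappa-1}\big)$ takes the form $G_\kappa=\big(Z^2\partial_\xi^2 Z\big)(\xi_\kappa)+\delta^2 g(\xi_\kappa)+O(\delta^4)$ with $g$ smooth. Since the spatial right‑hand side of \eqref{eq:dgf} is exactly the centered first difference $\tfrac1\delta\big(G_{k+1/2}-G_{k-1/2}\big)$ of $G$ about $\xi_k$ with stencil width $\delta$, one last even‑power expansion gives $\tfrac1\delta\big(G_{k+1/2}-G_{k-1/2}\big)=\partial_\xi\big(Z^2\partial_\xi^2 Z\big)(n\tau;\xi_k)+\delta^2\partial_\xi g(\xi_k)+O(\delta^2)=\partial_\xi\big(Z^2\partial_\xi^2 Z\big)(n\tau;\xi_k)+O(\delta^2)$, the $O(\delta^4)$‑part of $G$ contributing only $O(\delta^3)$ after division by $\delta$. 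Combining this with the time expansion and \eqref{eq:cgf} yields the residual bound $O(\tau+\delta^2)$, uniformly for $n\tau\le T$, which is the claim.
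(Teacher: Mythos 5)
Your proposal is correct and reaches the same conclusion as the paper, but the route differs in a noteworthy way. Both proofs begin from the same key observation: the midpoint weights $z^n_\kappa=\delta/(x^n_{\kappa+1/2}-x^n_{\kappa-1/2})$ are exactly the restriction to the mesh of a \emph{smooth} function (the paper names it $\tilde Z(t;\xi)=\delta/(\theX(t;\xi+\delta/2)-\theX(t;\xi-\delta/2))$) whose full derivative jet differs from that of $Z$ by $O(\delta^2)$. After that, the arguments diverge. The paper Taylor-expands $\tilde Z$ around the central node $\xi_k$, assembles the two one-sided terms of the scheme, obtains a residual that is \emph{a priori} only $O(\delta)$, and then upgrades it to $O(\delta^2)$ by a global symmetry observation: the whole spatial stencil, regarded as a function of $\delta$ via the Taylor coefficients, is even in $\delta$, so the odd-order coefficient must vanish. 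You instead expand $z^n_\kappa=Z(\xi_\kappa)+\delta^2\rho(\xi_\kappa)+\text{(smooth }O(\delta^4)\text{)}$ once and for all, then propagate this structured expansion through the centered second difference, the multiplication by $(z_\kappa)^2$, and finally the centered first difference, using at each stage only the elementary fact that a symmetric difference of a smooth function expands in even powers of the stencil width. This avoids the paper's global parity argument entirely, replacing it by a collection of local even-power expansions, and makes explicit \emph{why} a bare, unstructured $O(\delta^2)$ perturbation of the weights would be catastrophic (the double differencing would amplify it). Your treatment of the ghost values via the no-flux condition $\partial_\xi Z=0$ at $\xi\in\{0,M\}$ is a sensible addendum the paper glosses over. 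In short: same engine, different transmission; yours is a bit longer but more self-contained and arguably easier to verify line by line.

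Two small points of care if you flesh this out. First, when you say the $O(\delta^4)$ remainder in $z^n_\kappa$ is ``again a smooth-function restriction,'' you are using that $(\xi,\delta)\mapsto\delta/(\theX(\xi+\delta/2)-\theX(\xi-\delta/2))$ is jointly smooth and even in $\delta$ (removable singularity at $\delta=0$ because $\partial_\xi\theX$ is bounded away from zero); you should state this explicitly, since without it the $O(\delta^4)$ term, after being hit by $\delta^{-2}$ in the second difference and then by $\delta^{-1}$ in the first difference, would only give $O(\delta)$, exactly the loss you warn against. Second, your sentence claims that an unstructured $O(\delta^2)$ perturbation would ``downgrade the consistency to merely $O(\delta)$''; in the worst case it is actually $O(\delta^{-1})$ after both differencings, which makes the smooth-structure requirement even more essential than your phrasing suggests, though this does not affect the validity of the argument.
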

\begin{proof}
  Given $\Delta=(\tau;\delta)$, introduce $\tilde Z:\setRnn\times[\delta/2,M-\delta/2]\to\setR_+$ by
  \begin{align*}
    \tilde Z(t;\xi) = \frac\delta{X(t;\xi+\delta/2)-X(t;\xi-\delta/2)},
  \end{align*}
  which is a smooth and strictly positive function, thanks to the properties of $\theX$.
  It is immediately seen that
  \begin{align}
    \label{eq:zstar2}
    \partial_\xi^m\tilde Z(t;\xi) = \partial_\xi^mZ(t;\xi) + O(\delta^2),
  \end{align}
  for each $m\in\setN$ and locally uniformly in $(t;\xi)$ as $\Delta\to0$.
  Observe that, by definition of $(\xvec_\Delta)$ as restriction of $\theX$ to $\Delta$,
  one has
  \begin{align}
    \label{eq:zstar1}
    z^n_\kappa = \frac\delta{x^n_\kappp-x^n_\kappm} = \tilde Z(n\tau;\xi_\kappa).
  \end{align}
  Fix indices $n\in\setN$ and $k\in\{1,\ldots,K-1\}$.
  In the following, we abbreviate
  \begin{align*}
   z_*=\tilde Z(n\tau;\xi_k),\quad z_*'=\partial_\xi Z(n\tau;\xi_k), \quad\ldots, \quad \dot z_*=\partial_t Z(n\tau;\xi_k).
  \end{align*}
  Relation \eqref{eq:zstar1} and a standard Taylor expansion of $\tilde Z$ around $\xi=\xi_k$ yield
  \begin{align*}
    (z^n_\kph)^2\left(\frac{z^n_\kpd-2z^n_\kph+z^n_\kmh}{\delta^2}\right)
    &= \big(z_*^2+\delta z_*z_*'+O(\delta^2)\big)\big(z_*''+\frac\delta2z_*'''+O(\delta^2)\big) \\
    &= z_*^2z_*'' + \frac\delta2(2z_*z_*'z_*''+z_*^2z_*''') + O(\delta^2).
  \end{align*}
  The same expansion --- with $(-\delta)$ in place of $\delta$ --- is obtained 
  for $z^n_\kpd$, $z^n_\kph$ and $z^n_\kmh$ replaced by $z^n_\kph$, $z^n_\kmh$ and $z^n_\kmd$, respectively.
  Therefore, 
  \begin{equation}
    \label{eq:consist1}
    \begin{split}
      \frac1\delta\left[ (z^n_\kph)^2\left(\frac{z^n_\kpd-2z^n_\kph+z^n_\kmh}{\delta^2}\right)
        - (z^n_\kmh)^2\left(\frac{z^n_\kph-2z^n_\kmh+z^n_\kmd}{\delta^2}\right)\right]\\
      = 2z_*z_*'z_*''+z_*^2z_*''' + O(\delta).
    \end{split}
  \end{equation}
  Next, observe that the expression on the left-hand side remains invariant 
  under the simultaneous exchange of $z^n_\kph$ with $z^n_\kmh$ and of $z^n_\kpd$ with $z^n_\kmd$.
  It follows that the odd terms in the Taylor expansion must vanish,
  thus the approximation error on the right-hand side is actually of order $O(\delta^2)$ rather than $O(\delta)$.
  Further, using \eqref{eq:zstar1} and \eqref{eq:zstar2}, 
  the term of order $\delta^0$ can be written as
  \begin{align*}
    2z_*z_*'z_*''+z_*^2z_*''' 
    = \partial_\xi\big(\tilde Z(n\tau;\xi_k)^2\partial_\xi^2\tilde Z(n\tau;\xi_k)\big)
    = \partial_\xi\big(Z(n\tau;\xi_k)^2\partial_\xi^2Z(n\tau;\xi_k)\big) + O(\delta^2).
  \end{align*}
  On the left-hand side of \eqref{eq:dgf}, we obtain
  \begin{align}
    \label{eq:consist2}
    \frac{x^n_k-x^{n-1}_k}\tau 
    = \frac1\tau\left(\theX(n\tau;\xi_k)-\theX((n-1)\tau;\xi_k)\right)
    = \partial_t\theX(n\tau;\xi_k) + O(\tau),
  \end{align}
  thanks to the smoothness of $\theX$ in time.
  Comining \eqref{eq:consist1}\&\eqref{eq:consist2} with the continuous equation \eqref{eq:cgf},
  we arrive at \eqref{eq:dgf}, with an error of $O(\tau)+O(\delta^2)$.
\end{proof}

\subsection{Numerical experiments}
\subsubsection{Non-uniform meshes}
In order to make our discretization more flexible, 
we are going to change our setting and allow \emph{non-equidistant} mass grids.
That is, the mass discretization of $[0,M]$ is determined by a vector $\vech=(\xi_0,\xi_1,\xi_2,\ldots,\xi_{K-1},\xi_K)$,
with
\begin{align*}
  0=\xi_0 < \xi_1 < \cdots < \xi_{K-1} < \xi_K = M,
\end{align*}
and we introduce accordingly the distances
\begin{align*}
  \delta_\kappa = \xi_\kappp-\xi_\kappm, 
  \quad\text{and}\quad 
  \delta_k = \frac12(\delta_\kph+\delta_\kmh) = \frac12(\xi_{k+1}-\xi_{k-1})
\end{align*}
for $\kappa\in\hval$ and $k\in\ivalp$, respectively.
The piecewise constant density function $\baru\in\densNN$ corresponding to a vector $\xvec\in\setR^{K-1}$
is now given by
\begin{align*}
  \baru(x) = z_\kappa \quad\text{for $x_\kappm<x<x_\kappp$}, \quad
  \text{with}\quad z_\kappa = \frac{\delta_\kappa}{x_\kappp-x_\kappm}.
\end{align*}
The Wasserstein-like metric needs to be adapted as well:
the scalar product $\spr{\cdot}{\cdot}$ is replaced by
\begin{align*}
  \langle\vvec,\wvec\rangle_\vech = \sum_{k\in\ivalp} \delta_kv_kw_k.
\end{align*}
Hence the metric gradient $\nabla_\vech f(\xvec)\in\setR^{K-1}$ of a function $f:\xseqNN\to\setR$ at $\xvec\in\xseqNN$
is given by
\begin{align*}
  \big[\nabla_\vech f(\xvec)\big]_k = \frac1{\delta_k}\partial_{x_k}f(\xvec).
\end{align*}
Otherwise, we proceed as before:
the entropy is discretized by restriction, and the discretized Fisher information is the self-dissipation of the discretized entropy.
Explicitly, the resulting fully discrete gradient flow equation
\begin{align*}
  \frac{\xvec_\Delta^n-\xvec_\Delta^{n-1}}\tau = - \nabla_\vech\F_\vech(\xvec_\Delta^n)
\end{align*}
attains the form
\begin{equation}
  \label{eq:nonuniform}
  \begin{split}
    \frac{x^n_k-x^{n-1}_k}\tau = 
    \frac1{\delta_k}\Bigg[
    \frac{(z^n_\kph)^2}{\delta_\kph}\left(\frac{z^n_\kpd-z^n_\kph}{\delta_{k+1}}-\frac{z^n_\kph-z^n_\kmh}{\delta_k}\right)\\
    - \frac{(z^n_\kmh)^2}{\delta_\kmh}\left(\frac{z^n_\kph-z^n_\kmh}{\delta_k}-\frac{z^n_\kmh-z^n_\kmd}{\delta_{k-1}}\right)
    \Bigg].    
  \end{split}
\end{equation}

\subsubsection{Initial condition}
Our main experiments are carried out using the by now classical test case from \cite{BLS}, 
that is
\begin{align}
  \label{eq:u0}
  u^0(x) = \epsilon + \cos^{16}(\pi x), \quad\text{on}\ \Omega=[0,1],
\end{align}
with $\epsilon=10^{-3}$. 
The mass grid $\vech$ is chosen in such a way that $u_\Delta^0$
is a piecewise constant approxiation of $u^0$ with respect to a \emph{spatially uniform} grid. 
That is, we choose $\vech$ such that the initial condition $\xvec_\Delta^0$ for $\xvec$ attains the simple form
\begin{align}
  \label{eq:num0}
  x^0_k=a+\frac{b-a}Kk.  
\end{align}
To construct $\vech$, we first calculate the cummulative distribution function $U^0:[0,1]\to[0,M]$ 
by numerical integration of $u^0$,
\begin{align*}
  U^0(x) = \int_a^x u^0(y)\dd y,
\end{align*}
and then define $\xi_k:=U^0(x^0_k)$, for $k=0,1,\ldots,K$.
\begin{rmk}
  An equidistant mass grid leads to a good spatial resolution of regions where the value of $u^0$ is large,
  but provides a very poor resolution in regions where $u^0$ is small.
  Since the evolution of the zones with low density are of particular interest in numerical studies of the DLSS equation,
  it is natural to use a non-uniform mass grid with an adapted spatial resolution, like the one defined above.
\end{rmk}

\subsubsection{Implementation}
From the initial condition $\xvec_\Delta^0$, the fully discrete solution is calculated inductively 
by solving the implicit Euler scheme \eqref{eq:nonuniform} for $\xvec_\Delta^n$, given $\xvec_\Delta^{n-1}$.
In each time step, a damped Newton iteration is performed, with the solution from the previous time step as initial guess.
Slow convergence of the Newton iteration has been observed in situations 
where the density $\baru_\Delta^{n-1}$ has steep gradients and/or intervals of very low values.

Our reference solution is calculated with the scheme described in \cite{DMMnum},
which is fully variational as well, but uses different ansatz functions for the Lagrangian maps.
Even without a rigorous result on uniqueness of weak solutions,
it seems reasonable to expect that both schemes should approximate the same solution.
A technical issue with the comparison of our solution to the reference solution is 
that both use a different way for the reconstruction of the density from the Lagrangian map.
This difference camouflages the true approximation error in the plain $L^2$-differences.
For a fair comparison, we calculate the $L^2$-difference of the linear interpolations 
of the values for the density with respect to the nodes of the Lagrangian maps.

\subsubsection{Observed rate of convergence}
\label{sct:experiments}
%
%
\begin{figure}
  \centering
  \subfigure{\includegraphics[width=0.45\textwidth]{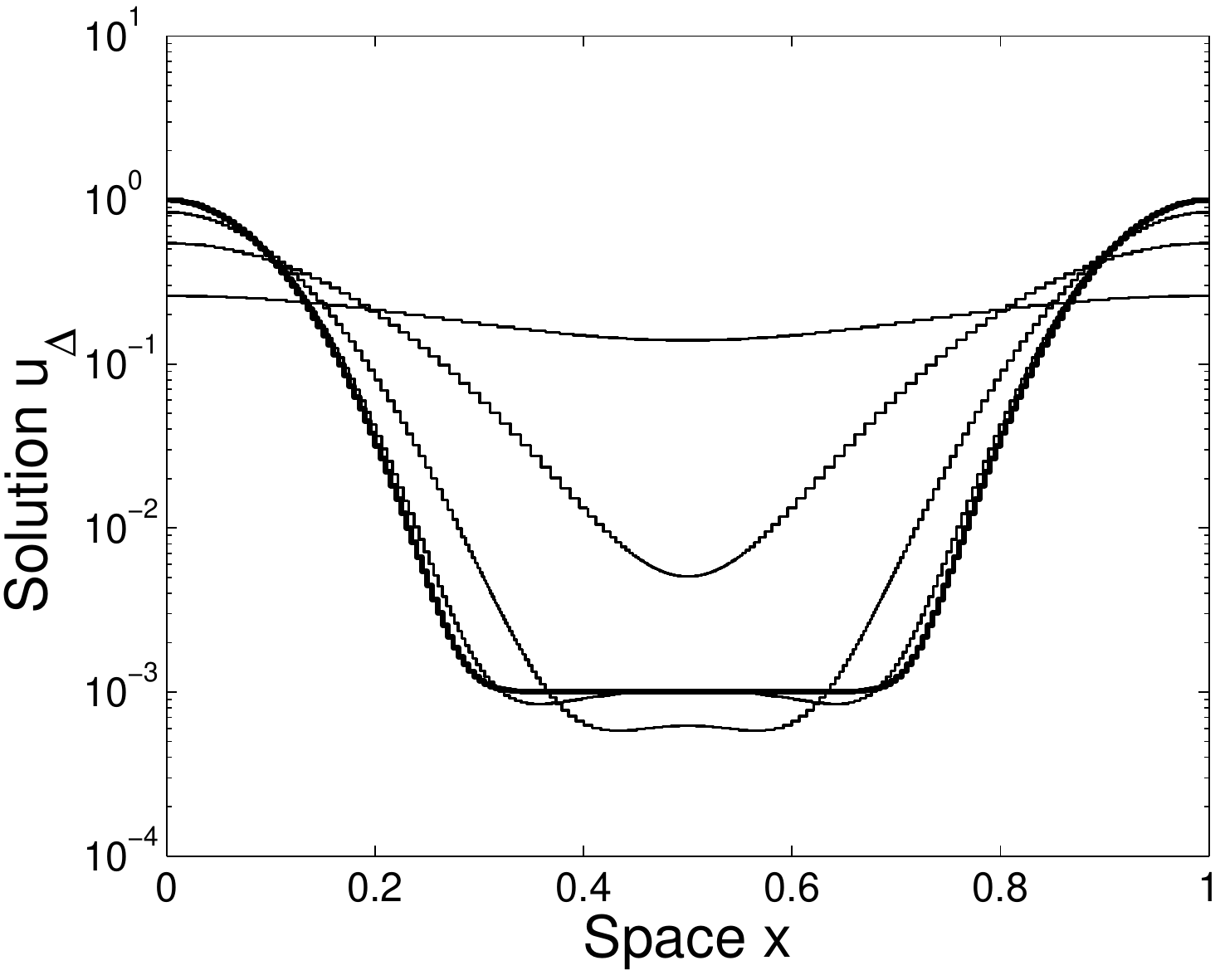}}
  \subfigure{\includegraphics[width=0.46\textwidth]{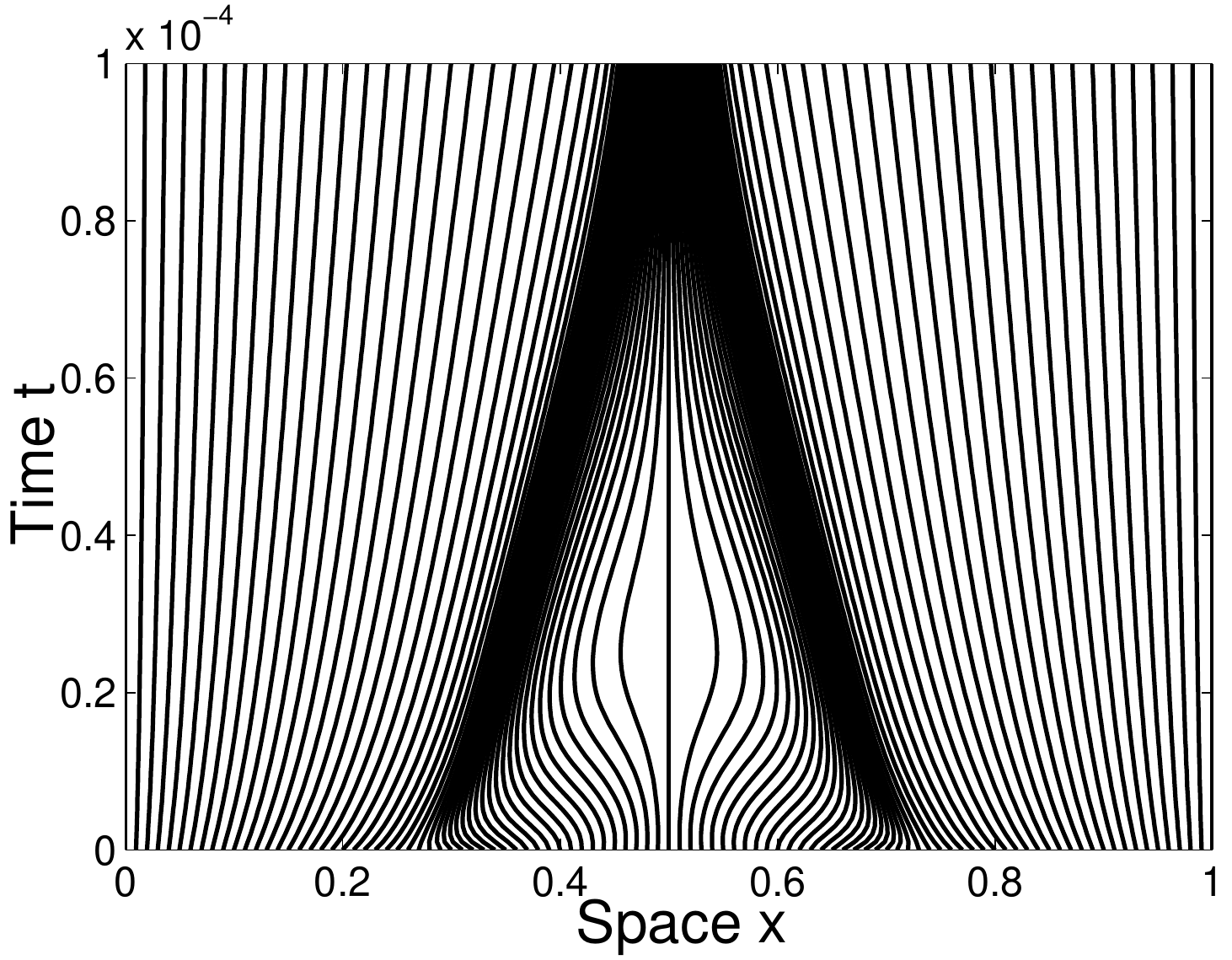}}
  \caption{\emph{Left:} snapshots of the densities $\baru_\Delta$ for the initial condition \eqref{eq:u0}
	at times $t=0$ and $t=10^{i}$, $i=-6,\ldots,-3$, 
	using $K=200$ grid points and the time step size $\tau=10^{-6}$. 
        \emph{Right:} associated particle trajectories.}
  \label{fig:fig1}
\end{figure}

\begin{figure}
  \centering
	\subfigure{\includegraphics[width=0.45\textwidth]{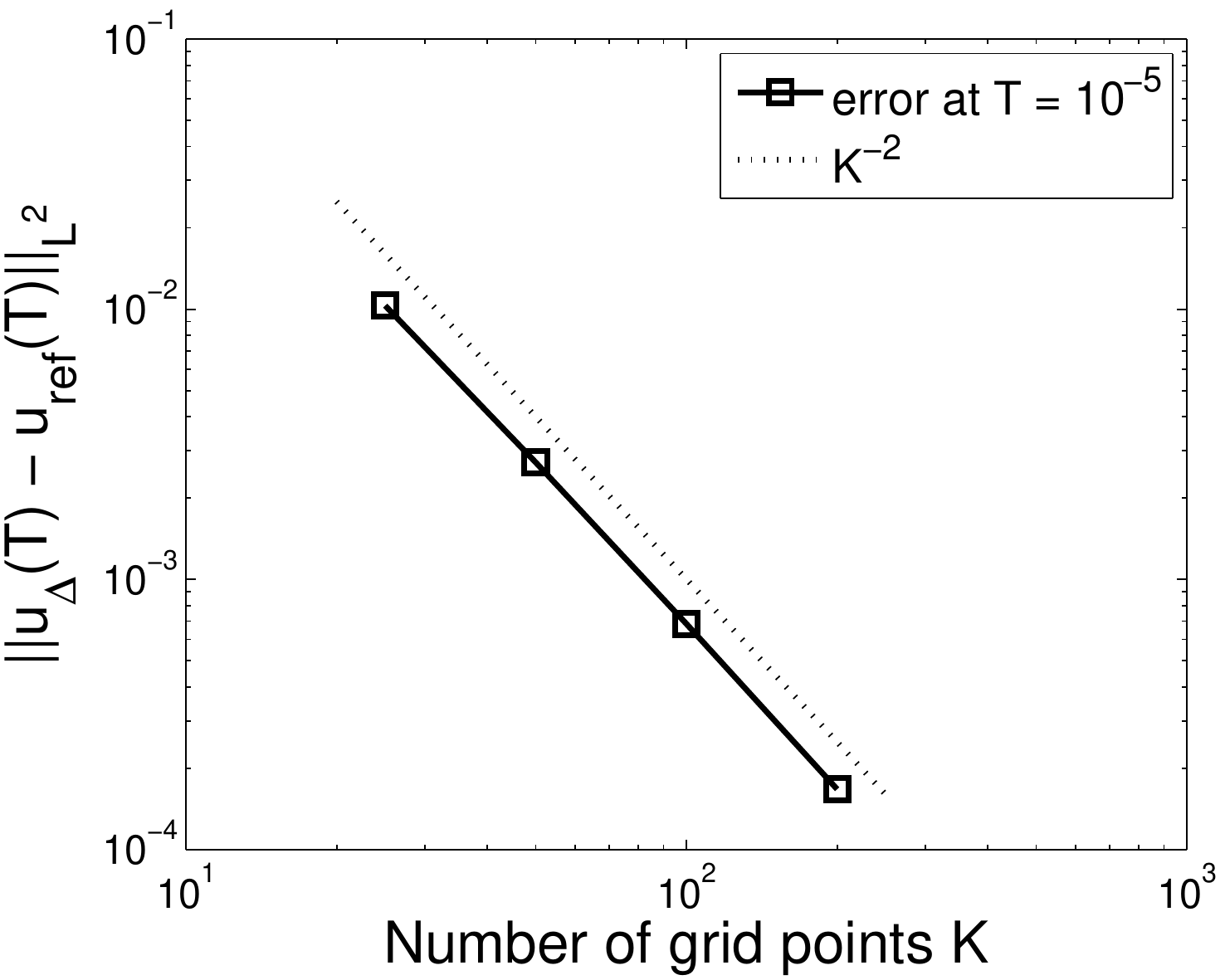}}
  \subfigure{\includegraphics[width=0.46\textwidth]{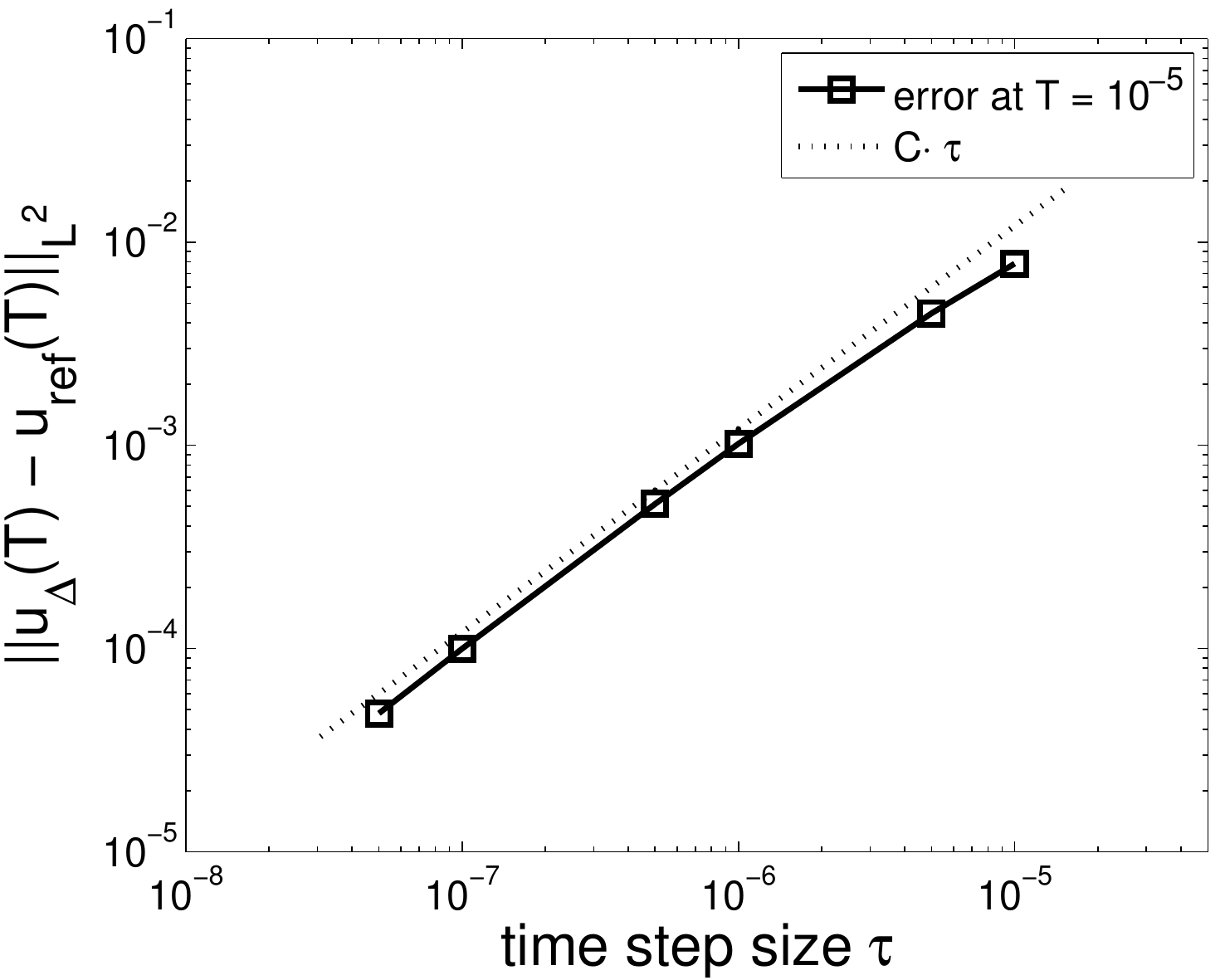}}
  \caption{Numerical error analysis for $u^0$ from \eqref{eq:u0}.
					\emph{Left:} fixed time step size $\tau=10^{-8}$ and $K=25,50,100,200$ spatial grid points.
    The $L^2$-errors are evaluated at $T=5\cdot10^{-6}$.
					\emph{Right:} fixed $K=800$ using $\tau=10^{-5},5\cdot10^{-6},10^{-6},5\cdot10^{-7},10^{-7},5\cdot10^{-8}$. 
		The error is evaluated at $T=10^{-5}$.}
  \label{fig:fig2}
\end{figure}
Figure \ref{fig:fig1} provides a qualitative picture of the evolution with initial condition $u^0$: 
the plot on the left shows the density function $\baru_\Delta$ at several instances in time,
the plot on the right visualizes the motion of the mesh points $\ti{x_k}$ associated to the Lagrangian maps $\theX_\Delta$ in continuous time.
It is clearly seen that the initial density has a very flat minimum (which is degenerate of order 16) at $x=1/2$,
which bifurcates into two sharper minima at later times, and eventually becomes one single minimum again.
This behavior underlines that the comparison principle does obviously not hold for the DLSS equation.
Both figures has been generated using $K=200$ spatial grid points and the time step size $\tau=10^{-6}$.


For numerical analysis of the convergence rate, we have carried out two series of experiments.
In the first series, we fix the time step size $\tau=10^{-8}$ and vary the number of spatial grid points, using $K=25,50,100,200$. 
Figure \ref{fig:fig2}/Left shows the corresponding $L^2$-error 
between the solution to our scheme and the reference solution, evaluated at time $T=10^{-5}$.
It is clearly seen that the error decays with an almost perfect rate of $\delta^2\propto K^{-2}$.
For the second series of experiments, we keep the spatial discretization parameter $K=800$ fixed 
and run our scheme with the time step sizes $\tau=10^{-5},5\cdot10^{-6},10^{-6},5\cdot10^{-7},10^{-7},5\cdot10^{-8}$, respectively.
The corresponding $L^2$-error at $T=10^{-5}$ is plotted in Figure \ref{fig:fig2}/Right. 
It is proportional to $\tau$. 

\subsubsection{Discontinuous initial data}
\begin{figure}
  \centering
  \subfigure{\includegraphics[width=0.45\textwidth]{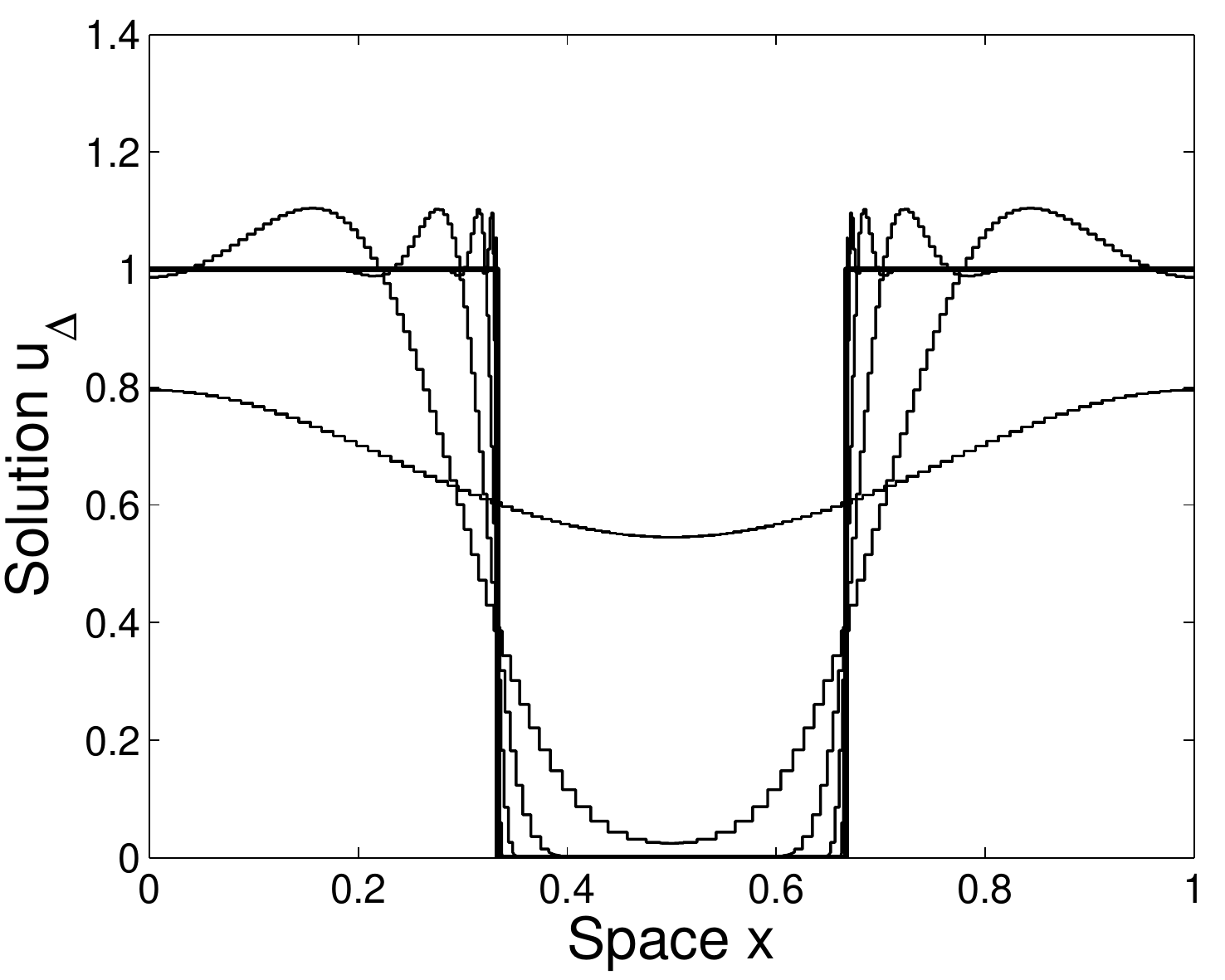}}
  \subfigure{\includegraphics[width=0.46\textwidth]{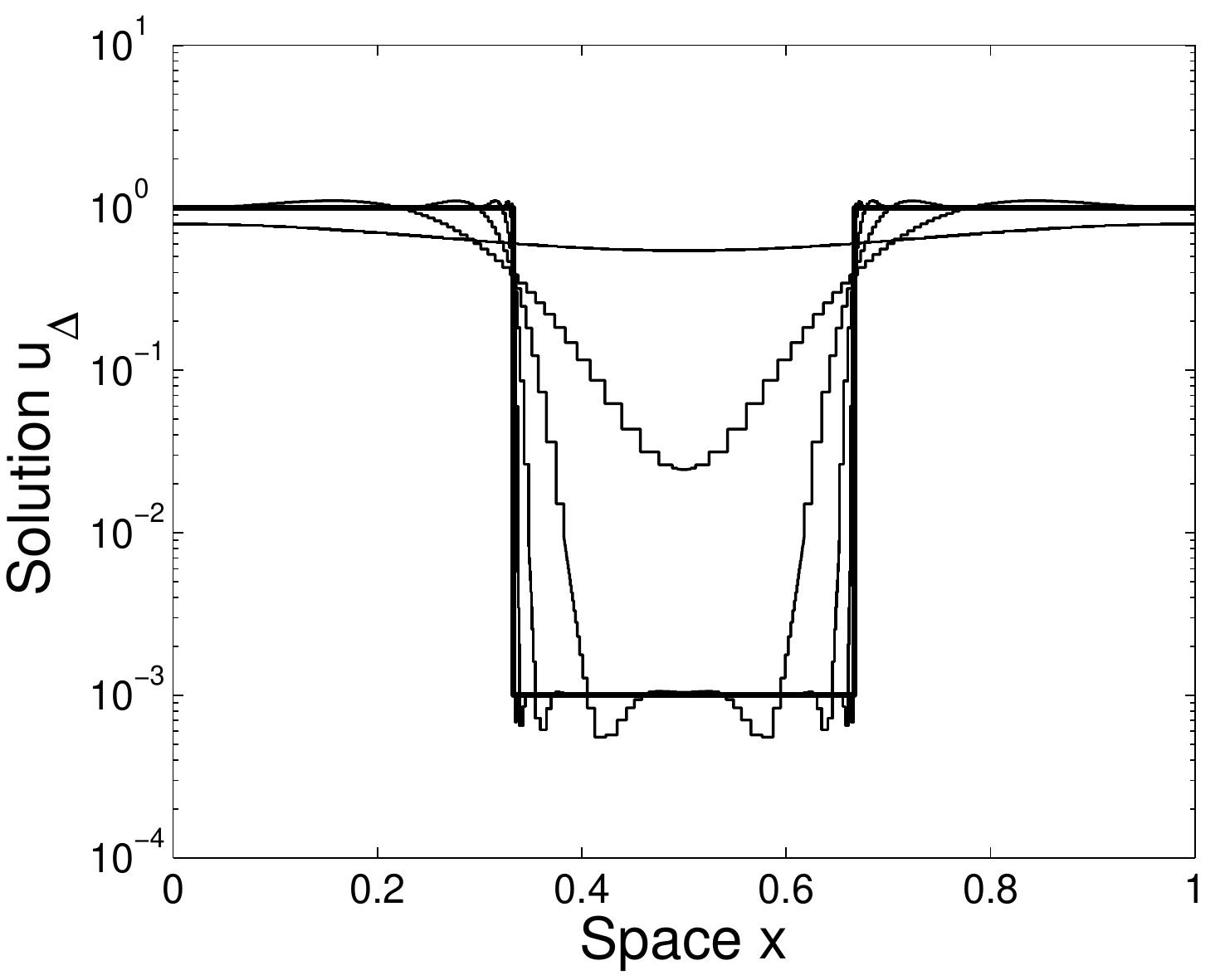}}
  \caption{Snapshots of the densities $\baru_\Delta$ for the initial condition \eqref{eq:u0discont}
	at times $t=0$ and $t=10^{i}$, $i=-13,-11,\ldots,-5,-3$, 
	using $K=200$ grid points with linear (left) and logarithmic (right) scaling.}
  \label{fig:fig3}
\end{figure}
\begin{figure}
  \centering
  \subfigure{\includegraphics[width=0.45\textwidth]{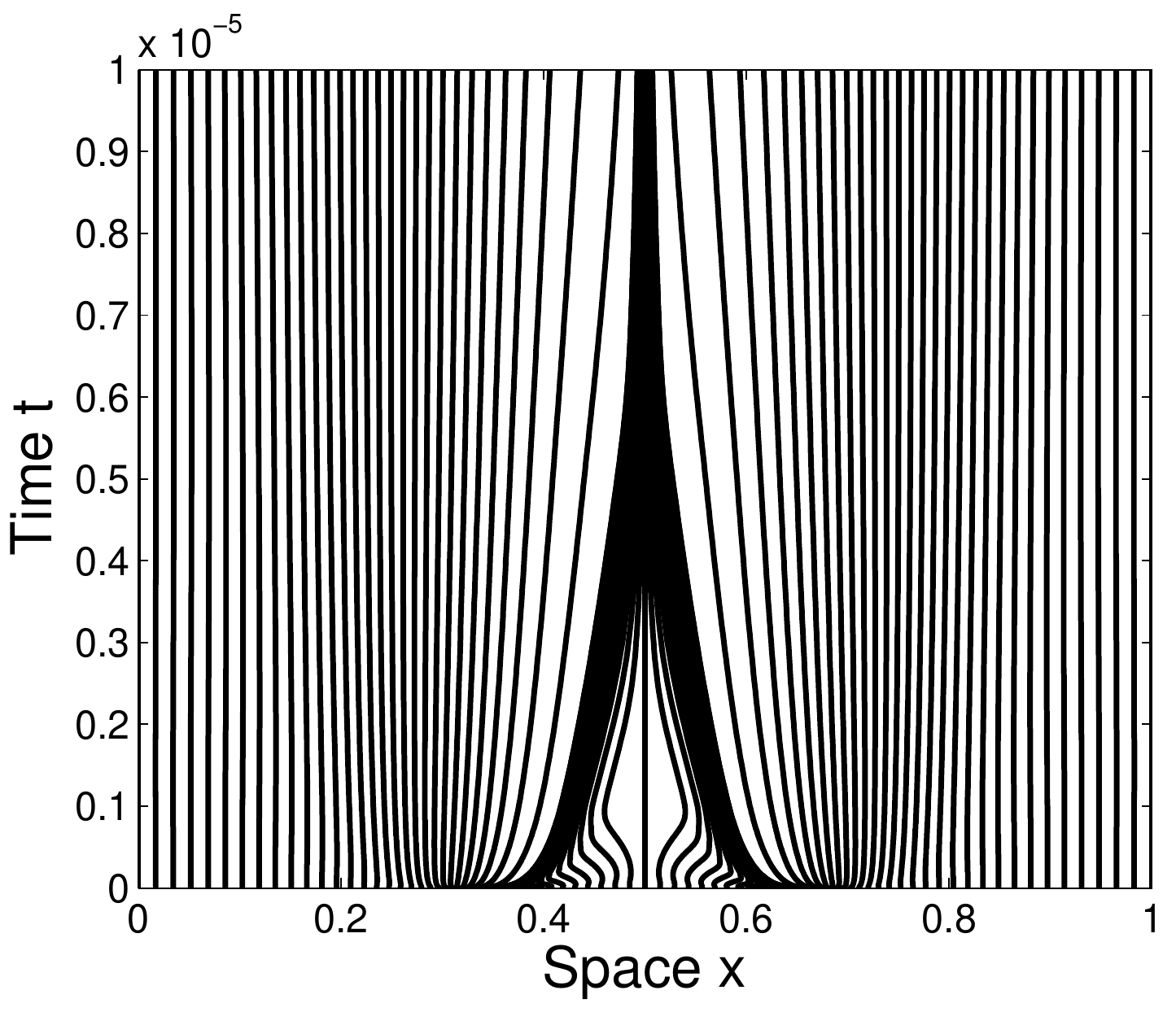}}
  \subfigure{\includegraphics[width=0.46\textwidth]{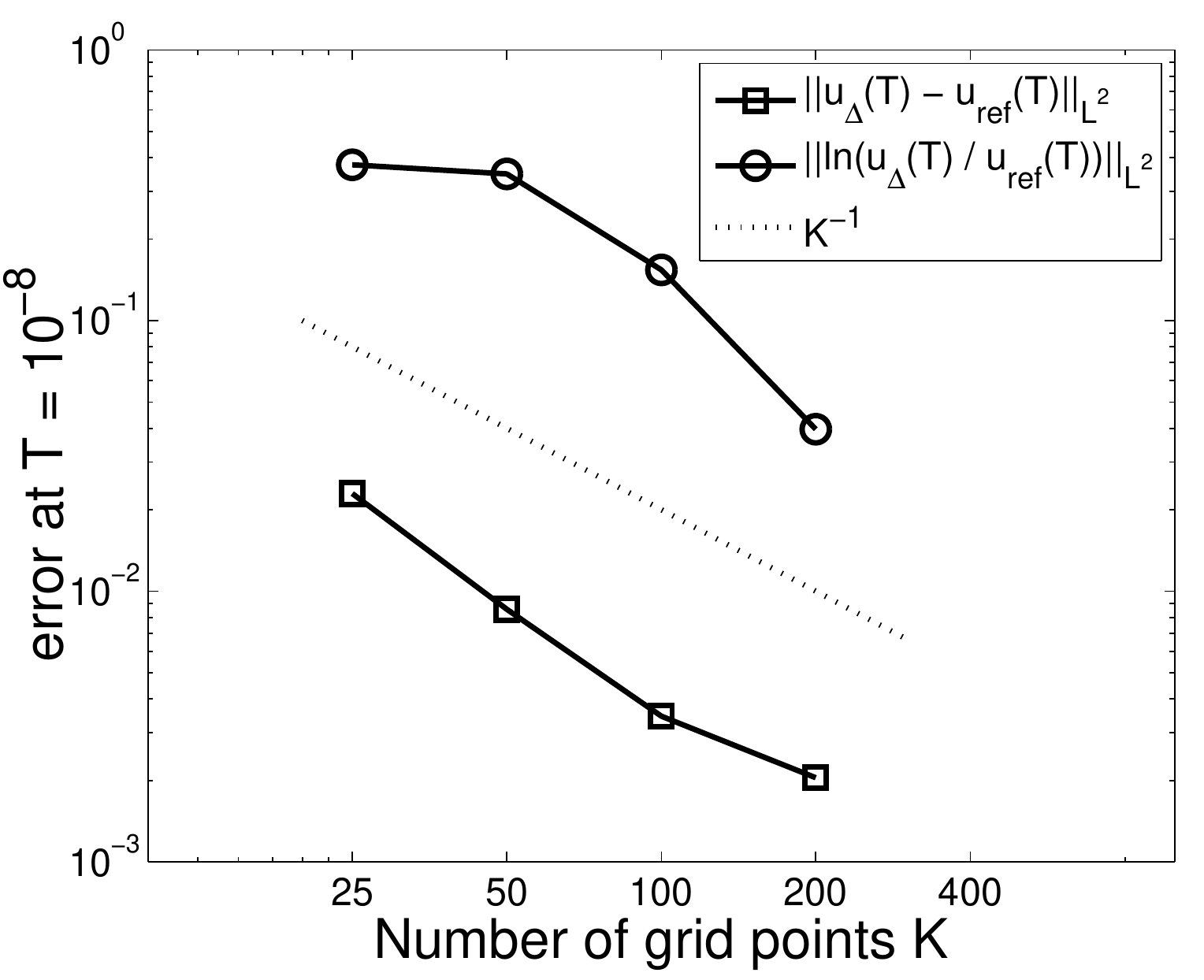}}
  \caption{\emph{Left:} associated particle trajectories of $\baru_\Delta$ using the initial condition \eqref{eq:u0discont}. 
        \emph{Right:} Numerical error analysis for $u_{\operatorname{discont}}^0$ from \eqref{eq:u0discont}
				with fixed $\tau$ and $K=25,50,100,200$ spatial grid points.
    The $L^2$-errors are evaluated at $T=10^{-8}$.}
  \label{fig:fig4}
\end{figure}
One of the conclusions of Theorem \ref{thm:main} is that
the discrete approximations $u_\Delta$ converge also for (a large class of) non-regular initial data $u^0$.
For illustration of this feature, we consider the discontinuous initial density function
\begin{align}\label{eq:u0discont}
	u_\text{discont}^0 
        = \begin{cases} 1 & x\in[0,\frac{1}{3}]\cup[\frac{2}{3},1],\\ 10^{-3},& x\in (\frac{1}{3},\frac{2}{3}) \end{cases}
\end{align}
instead of $u^0$ from \eqref{eq:u0}. 
According to our hypothesis \eqref{eq:genhypo}, we need to use a sufficiently high spatial and temporal resolution.
In practice, this is done in an adaptive way:
the $K$ points of the initial grid $\xvec_\Delta^0$ are not placed equidistantly, 
but with a higher refinement around the points of discontinuity;
the applied time step $\tau$ is extremely small (down to $10^{-13}$) during the initial phase of the evolution,
and is larger (up to $10^{-9}$) at later times.

Figure \ref{fig:fig3} provides a qualitative picture of the fully discrete evolution for $K=200$ grid points:
snapshots of the discrete density function $\bar u_\Delta$ are shown on the left,
corresponding snapshots of the logarithmic density are shown on the right.
Note that within a very short time, 
peaks of relatively high amplitudes are generated near the points where $u^0$ is discontinuous.
The associated Lagrangian maps are visualized in Figure \ref{fig:fig4}/Left.
Notice the fast motion of the grid points near the discontinuities.

To estimate the rate of convergence, we performed a series of experiments using $K=25,50,100$ and $200$ spatial grid points.
For comparison, we calculated a highly refined solution of the following semi-implicit reference scheme,
\begin{align*}
	\frac{u_{\text{ref}}^{n+1}-u_{\text{ref}}^n}{\tau}
	= -\Delta_2\big( u_{\text{ref}}^{n} \Delta_2\ln(u_{\text{ref}}^{n+1})\big),
\end{align*}
where $\Delta_2$ is the standard central difference operator $\Delta_2$.
The reference scheme is run with $K=800$ spatial grid point.
An adaptive choice of the time step $\tau$ needs to be made 
in order to avoid that the reference solution $u_\text{ref}$ breaks down due to loss of positivity.
The $L^2$-differences of the densities and of their logarithms have been evaluated at $T=10^{-8}$, see Figure \ref{fig:fig4}/Right.
As expected, the rate of convergence is no longer quadratic in $\delta\propto K^{-1}$;
instead, the error decays approximately linearly.


\begin{appendix}
  \section{Some technical lemmas}
  \begin{lem}
    For each $p>1$ and $\xvec\in\xseqN$ with $\zvec=\cz[\xvec]$, one has that
    \begin{align}
      \label{eq:xpower}
      \sum_{\kappa\in\hval}\left(\frac\delta{z_\kappa}\right)^p =\sum_{\kappa\in\hval}(x_\kappp-x_\kappm)^p \le (b-a)^p.
    \end{align}
  \end{lem}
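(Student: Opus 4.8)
The plan is to exploit the telescoping structure of the sum together with the elementary superadditivity of the power function $t\mapsto t^p$ for $p\ge1$. The claimed identity $\sum_{\kappa\in\hval}(\delta/z_\kappa)^p=\sum_{\kappa\in\hval}(x_\kappp-x_\kappm)^p$ is nothing but the definition \eqref{eq:zk} of the weights $z_\kappa$, rewritten as $\delta/z_\kappa=x_\kappp-x_\kappm$; so after this trivial step only the stated inequality remains to be proved.

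Next I would set $a_\kappa:=x_\kappp-x_\kappm$ for $\kappa\in\hval$. Since $\xvec\in\xseqN$ is strictly increasing and we use the convention $x_0=a$, $x_K=b$, each $a_\kappa$ is strictly positive, and the points $x_{1/2},x_1,x_{3/2},\ldots,x_{K-1/2}$ form a partition of $[a,b]$ into consecutive subintervals. Consequently the sum of the $a_\kappa$ telescopes:
\[
  \sum_{\kappa\in\hval}a_\kappa=\sum_{\kappa\in\hval}(x_\kappp-x_\kappm)=x_K-x_0=b-a.
\]

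It then remains to show that $\sum_\kappa a_\kappa^p\le\bigl(\sum_\kappa a_\kappa\bigr)^p$ for arbitrary non-negative reals and $p\ge1$. For this I would normalize: writing $S:=\sum_\kappa a_\kappa=b-a$ and $t_\kappa:=a_\kappa/S\in[0,1]$, one has $t_\kappa^p\le t_\kappa$ because $p\ge1$ and $0\le t_\kappa\le1$; summing over $\kappa\in\hval$ yields $S^{-p}\sum_\kappa a_\kappa^p=\sum_\kappa t_\kappa^p\le\sum_\kappa t_\kappa=1$, which is precisely \eqref{eq:xpower} after multiplying through by $S^p=(b-a)^p$.

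There is no real obstacle here: the statement is elementary once the telescoping is observed. The only points requiring (minor) care are the use of the boundary convention $x_0=a$, $x_K=b$ to make the sum collapse, and the strict monotonicity of $\xvec$ to guarantee positivity of every summand $a_\kappa$, so that the normalization $t_\kappa=a_\kappa/S$ and the bound $t_\kappa\le1$ are legitimate even when some $a_\kappa$ are very small.
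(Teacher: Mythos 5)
Your argument is correct and is essentially the paper's proof: both rest on the telescoping identity $\sum_{\kappa\in\hval}(x_\kappp-x_\kappm)=b-a$ and the elementary bound $a_\kappa^p\le (b-a)^{p-1}a_\kappa$ for $p>1$, which your normalization $t_\kappa=a_\kappa/(b-a)$ merely restates. (A small imprecision: the partition of $[a,b]$ relevant to the telescoping is given by the integer nodes $x_0,x_1,\ldots,x_K$, not by the half-integer midpoints $x_{1/2},x_{3/2},\ldots$ you mention; this does not affect the computation.)
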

  \begin{proof}
    The first equality is simply the definition \eqref{eq:zk} of $z_\kappa$.
    Since trivially $x_\kappp-x_\kappm<b-a$ for each $\kappa\in\hval$,
    and since $p-1>0$,
    it follows that
    \begin{align*}
      \sum_{\kappa\in\hval}(x_\kappp-x_\kappm)^p\le (b-a)^{p-1}\sum_{\kappa\in\hval}(x_\kappp-x_\kappm) = (b-a)^p.
      &\qedhere
    \end{align*}
  \end{proof}
  \begin{lem}
    For each $\xvec\in\xseqN$ with $\zvec=\cz[\xvec]$, one has that
    \begin{align}
      \label{eq:lohi}
      \frac{\delta}{b-a}\le z_\kappa \le M^{1-1/q}\left(\delta\sum_{k\in\ivalp}\left|\frac{z_\kph-z_\kmh}\delta\right|^q\right)^{1/q}
      + \frac{M}{b-a} \quad \text{for all $\kappa\in\hval$},
    \end{align}
    and consequently,
    \begin{align}
      \label{eq:Festimate}
      z_\kappa \le \big(2M\Fz[\xvec]\big)^{1/2} 
      + \frac{M}{b-a} \quad \text{for all $\kappa\in\hval$}.
    \end{align}
  \end{lem}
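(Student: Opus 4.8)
The plan is to get the lower bound in \eqref{eq:lohi} by a one-line estimate, the upper bound by a pigeonhole-type observation combined with a telescoping sum and H\"older's inequality, and then to deduce \eqref{eq:Festimate} by specializing to $q=2$. For the lower bound I would simply note that $x_\kappm,x_\kappp\in[a,b]$ forces $x_\kappp-x_\kappm\le b-a$, so that $z_\kappa=\delta/(x_\kappp-x_\kappm)\ge\delta/(b-a)$ for every $\kappa\in\hval$.

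For the upper bound, the first step is the telescoping identity $\delta\sum_{\kappa\in\hval}z_\kappa^{-1}=\sum_{\kappa\in\hval}(x_\kappp-x_\kappm)=x_K-x_0=b-a$, which together with the fact that $\hval$ has exactly $K$ elements and $\delta K=M$ shows that the arithmetic mean of $z_\kappa^{-1}$ over $\kappa\in\hval$ equals $(b-a)/M$; hence there is a reference index $\kappa_0\in\hval$ with $z_{\kappa_0}\le M/(b-a)$. Next, for an arbitrary $\kappa\in\hval$ I would write $z_\kappa$ as $z_{\kappa_0}$ plus a telescoping sum of increments $z_\kph-z_\kmh$, where $k$ runs over the consecutive integers strictly between $\kappa_0$ and $\kappa$, all of which lie in $\ivalp$. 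This gives
\begin{align*}
  z_\kappa \;\le\; z_{\kappa_0} + \sum_{k\in\ivalp}\big|z_\kph-z_\kmh\big|
  \;\le\; \frac{M}{b-a} + \delta\sum_{k\in\ivalp}\Big|\frac{z_\kph-z_\kmh}{\delta}\Big| .
\end{align*}
Applying H\"older's inequality to the last sum (which has $K-1\le K$ terms, using $q\ge1$) and then $\delta K=M$ yields
\begin{align*}
  \delta\sum_{k\in\ivalp}\Big|\frac{z_\kph-z_\kmh}{\delta}\Big|
  \;\le\; \delta K^{1-1/q}\Big(\sum_{k\in\ivalp}\Big|\frac{z_\kph-z_\kmh}{\delta}\Big|^q\Big)^{1/q}
  \;=\; M^{1-1/q}\Big(\delta\sum_{k\in\ivalp}\Big|\frac{z_\kph-z_\kmh}{\delta}\Big|^q\Big)^{1/q},
\end{align*}
and combining the two displays gives \eqref{eq:lohi}.

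Finally, \eqref{eq:Festimate} follows by taking $q=2$ and recalling that $\Fz(\xvec)=\tfrac\delta2\sum_{k\in\ivalp}\big((z_\kph-z_\kmh)/\delta\big)^2$, so that $M^{1/2}\big(\delta\sum_{k\in\ivalp}((z_\kph-z_\kmh)/\delta)^2\big)^{1/2}=(2M\Fz(\xvec))^{1/2}$. I do not expect a genuine obstacle here, since all the estimates are elementary; the only mildly non-routine point is the averaging step that produces a reference index $\kappa_0$ with the a priori bound $z_{\kappa_0}\le M/(b-a)$ that is independent of the oscillation of $\zvec$. Once that is in hand, the rest is just a telescoping sum controlled by the $\ell^q$-norm of the discrete gradient of $\zvec$.
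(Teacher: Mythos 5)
Your proof is correct and follows essentially the same route as the paper: locate a reference index $\kappa_0$ with $z_{\kappa_0}\le M/(b-a)$ by an averaging argument based on $\delta\sum_\kappa z_\kappa^{-1}=b-a$, then telescope to an arbitrary $\kappa$ and apply H\"older to the discrete gradient, with $q=2$ giving \eqref{eq:Festimate}. The only cosmetic difference is that the paper phrases the telescoping between the maximizer $\kappa^*$ and the reference index, whereas you telescope directly from $\kappa_0$ to the arbitrary $\kappa$; the estimate is identical.
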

  \begin{proof}
    The first estimate in \eqref{eq:lohi} is an immediate consequence of the definition of $z_\kappa$ in \eqref{eq:zk}.
    To prove the second estimate, let $\kappa^*\in\hval$ be such that $z_{\kappa^*}=\max z_k$.
    Observe that there exists a $\kappa_*\in\hval$ such that
    \begin{align}
      \label{eq:dummy001}
      z_{\kappa_*} \le \frac{M}{b-a} \le z_{\kappa^*}.
    \end{align}
    Writing out $z_{\kappa^*}-z_{\kappa_*}$ as a sum over differences of adjacent values of $z_k$
    and applying the triangle and Cauchy Schwarz inequality,
    one obtains
    \begin{align*}
      z_{\kappa^*}-z_{\kappa_*}
      \le \sum_{k\in\ivalp}|z_\kph-z_\kmh| 
      \le \left(\delta\sum_{k\in\ivalp}1\right)^{1-1/q}\left(\delta\sum_{k\in\ivalp}\left|\frac{z_\kph-z_\kmh}\delta\right|^q\right)^{1/q}.
    \end{align*}
    Now combine this with \eqref{eq:dummy001}.
  \end{proof}
  \begin{lem}
    With $\hatu$ and $\baru$ being, respectively, the piecewise linear and the piecewise constant densities
    associated to a given vector $\xvec$,
    then
    \begin{align}
      \label{eq:Hinterpol}
      \HFz(\xvec) = \HF(\baru)\le\HF(\hatu).
    \end{align}
  \end{lem}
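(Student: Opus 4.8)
The identity $\HFz(\xvec)=\HF(\baru)$ is nothing but the definition of $\HFz$ read backwards: since $\baru=\cf_\theh[\xvec]$ equals the constant $z_\kappa$ on the interval $(x_\kappm,x_\kappp]$ of length $\delta/z_\kappa$, one has $\int_\Omega\baru\ln\baru\dd x=\sum_{\kappa\in\hval}\frac{\delta}{z_\kappa}\,z_\kappa\ln z_\kappa=\delta\sum_{\kappa\in\hval}\ln z_\kappa$, and subtracting $\HF_0$ yields precisely $\HFz(\xvec)$. So the substance of the lemma is the inequality $\HF(\baru)\le\HF(\hatu)$, and the plan is to prove it by exploiting convexity of the entropy density $\phi(s):=s\ln s$.

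Because the additive constant $\HF_0$ cancels, it is enough to show $\int_\Omega\phi(\baru)\dd x\le\int_\Omega\phi(\hatu)\dd x$, and I would split both integrals over the grid cells $I_k=(x_{k-1},x_k)$. On each $I_k$ the function $\baru$ is the constant $z_\kmh$, whereas $\hatu$ is continuous, affine on each of the two halves $(x_{k-1},x_\kmh)$ and $(x_\kmh,x_k)$, with $\hatu(x_\kmh)=z_\kmh$ and with the integer-node values $z_{k-1},z_k$ (the averages of neighbouring weights $z_\kappa$) at the remaining endpoints; cf.\ \eqref{eq:interpol1} and \eqref{eq:tildeux}. Applying the tangent-line inequality $\phi(\hatu)\ge\phi(z_\kmh)+\phi'(z_\kmh)(\hatu-z_\kmh)$ on $I_k$ and integrating, one bounds the cell-wise difference $\int_{I_k}\bigl(\phi(\hatu)-\phi(\baru)\bigr)\dd x$ from below by $\phi'(z_\kmh)\int_{I_k}(\hatu-z_\kmh)\dd x$ plus the non-negative Bregman remainder $\int_{I_k}\bigl[\phi(\hatu)-\phi(z_\kmh)-\phi'(z_\kmh)(\hatu-z_\kmh)\bigr]\dd x\ge0$.

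It then remains — and this is the heart of the matter — to show that, once summed over $k$, the first-order terms $\sum_k\phi'(z_\kmh)\int_{I_k}(\hatu-z_\kmh)\dd x$ do not spoil the estimate. From the explicit affine form of $\hatu$ one computes $\int_{I_k}(\hatu-z_\kmh)\dd x=\frac{\delta}{8z_\kmh}\bigl(z_\kmd-2z_\kmh+z_\kph\bigr)$, with the obvious modifications at $k=1,K$ coming from the no-flux convention \eqref{eq:zconvention}; a discrete summation by parts — in which that same convention makes the boundary terms vanish — converts this sum into a discrete Dirichlet-type quadratic form in the $z_\kappa$. Since the bare first-order bound is not sign-definite, the argument must be completed by retaining the positive second-order (Bregman) contributions of the individual cells and showing that they absorb the negative part; this quantitative balancing of the two contributions is the step I expect to be the main obstacle. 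Once $\int_\Omega\phi(\baru)\dd x\le\int_\Omega\phi(\hatu)\dd x$ has been established, the assertion \eqref{eq:Hinterpol} follows.
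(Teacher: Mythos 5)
Your handling of the identity $\HFz(\xvec)=\HF(\baru)$ is correct and is exactly how the paper reads it, and your per-cell computation $\int_{I_k}(\hatu-z_\kmh)\dd x=\tfrac{\delta}{8z_\kmh}\bigl(z_\kmd-2z_\kmh+z_\kph\bigr)$ is right. But, as you say yourself, the argument does not close: the tangent-line (Bregman) decomposition at $z_\kmh$ on each $x$-cell produces first-order terms $\phi'(z_\kmh)\int_{I_k}(\hatu-z_\kmh)\dd x$ which are sign-indefinite, and since $\hatu$ does not have the same mass as $\baru$ on $I_k$ (nor globally), these terms neither vanish individually nor telescope away cleanly after your summation by parts. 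Declaring the balancing of these against the non-negative Bregman remainders ``the step I expect to be the main obstacle'' is an honest acknowledgement that the proof is incomplete; without a quantitative argument showing the second-order contributions dominate, the inequality $\HF(\baru)\le\HF(\hatu)$ has not been established. This is a genuine gap, not a cosmetic one.

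The paper proceeds by a different route, and the comparison is instructive. Rather than compare $\baru$ and $\hatu$ cell by cell in $x$, it moves entirely to the Lagrangian variable $\xi$: it writes $\int_\Omega\baru\ln\baru\,\dd x=\delta\sum_{\kappa\in\hval}\ln z_\kappa$, regroups this sum as $\tfrac{\delta}{2}(\ln z_{1/2}+\ln z_{K-1/2})+\delta\sum_{k\in\ivalp}\tfrac12(\ln z_\kmh+\ln z_\kph)$, and then matches it against $\int_0^M\ln\hatz(\xi)\dd\xi$ cell by cell over the half-integer cells $(\xi_\kmh,\xi_\kph)$, using only the elementary scalar inequality $\int_0^1\ln\bigl(p(1-\lambda)+q\lambda\bigr)\dd\lambda=\tfrac{p\ln p-q\ln q}{p-q}-1\ge\tfrac12(\ln p+\ln q)$; no summation by parts and no cross-term balancing arise because in $\xi$-space both objects sit on the same uniform grid and the comparison decouples into single-cell scalar inequalities. (The paper then identifies $\int_0^M\ln\hatz\,\dd\xi$ with $\int_\Omega\hatu\ln\hatu\,\dd x$; you should convince yourself of this identification, since $\theX$ is the Lagrangian map of $\baru$ rather than of $\hatu$, so the change of variables is not as innocent as it might appear at first sight.) If you want to salvage your own strategy, the productive move is exactly this change of perspective: work with $\bar z$ and $\hatz$ on $[0,M]$, where the equal-mass structure your Bregman argument implicitly needs is actually present.
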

  \begin{proof}
    First observe that
    \begin{align}
      \label{eq:lnmean}
      \int_0^1 \ln\big(p(1-\lambda)+q\lambda\big)\dd\lambda = \frac{p\ln p-q\ln q}{p-q} -1 \ge \frac12(\ln p+\ln q),
    \end{align}
    which is an easy consequence of a Taylor expansion for the function $s\mapsto(1+s)\ln s$ around $s=1$,
    substituting $s=p/q$.
    On the one hand, we have that
    \begin{align*}
      \intom \baru(x)\log\baru(x)\dd x
      = \delta\sum_{\kappa\in\hval}\log z_\kappa
      = \delta\frac{\log z_{1/2}+\log z_{K-1/2}}2 + \delta\sum_{k\in\ivalp}\frac{\log z_\kph+\log z_\kmh}2,
    \end{align*}
    and on the other hand,
    \begin{align*}
      \intom\hatu(x)\log\hatu(x)\dd x 
      &= \int_0^M \log\hatz(\xi)\dd\xi \\
      &= \frac\delta2\big(\log z_0+\log z_K) + \delta\sum_{k\in\ivalp}\int_0^1 \ln\big(z_\kmh(1-\lambda)+z_\kph\lambda\big)\dd\lambda\\
      &\ge \delta\frac{\log z_{1/2}+\log z_{K-1/2}}2 + \delta\sum_{k\in\ivalp}\frac{\log z_\kph+\log z_\kmh}2,
    \end{align*}
    where we have used \eqref{eq:lnmean}.
    This clearly implies \eqref{eq:Hinterpol}.
  \end{proof}
  \begin{lem}[Gargliardo-Nirenberg inequality]
    \label{lem:GN}
    For each $f\in H^1(\Omega)$, one has that
    \begin{align}
      \label{eq:GN}
	\|f\|_{C^{1/6}(\Omega)} \leq (9/2)^{1/3} \|f\|_{H^1(\Omega)}^{2/3} \|f\|_{L^2(\Omega)}^{1/3}.
      \end{align}
    \end{lem}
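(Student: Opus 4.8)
The plan is to prove \eqref{eq:GN} by the classical interpolation route: split the $C^{1/6}$-norm into the supremum norm $\|f\|_{C^0(\Omega)}$ and the H\"older seminorm $[f]_{1/6}:=\sup_{x\neq y}|f(x)-f(y)|\,|x-y|^{-1/6}$, and bound each of the two pieces by a constant times $\|f\|_{H^1(\Omega)}^{2/3}\|f\|_{L^2(\Omega)}^{1/3}$. Only two elementary one-dimensional facts are needed, and I would assemble them as follows.

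First I would control the supremum norm. Since the average of $f^2$ over $\Omega$ equals $\|f\|_{L^2(\Omega)}^2/(b-a)$, there is a point $\bar x\in\Omega$ with $f(\bar x)^2\le\|f\|_{L^2(\Omega)}^2/(b-a)$. For arbitrary $x\in\Omega$, the identity $f(x)^2=f(\bar x)^2+2\int_{\bar x}^{x}f f'\dd t$ together with the Cauchy--Schwarz inequality gives $f(x)^2\le\|f\|_{L^2(\Omega)}^2/(b-a)+2\|f\|_{L^2(\Omega)}\|f'\|_{L^2(\Omega)}$. Estimating the right-hand side by a multiple of $\|f\|_{H^1(\Omega)}\|f\|_{L^2(\Omega)}$ (using $\|f\|_{L^2}\le\|f\|_{H^1}$ and $\|f'\|_{L^2}\le\|f\|_{H^1}$, with a free parameter in Young's inequality to be optimized later) yields a bound of the form $\|f\|_{C^0(\Omega)}^2\le c\,\|f\|_{H^1(\Omega)}\|f\|_{L^2(\Omega)}$. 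Writing the right-hand side as $c\,\|f\|_{H^1}^{4/3}\|f\|_{L^2}^{2/3}\,(\|f\|_{L^2}/\|f\|_{H^1})^{1/3}$ and using $\|f\|_{L^2}\le\|f\|_{H^1}$ once more, we obtain $\|f\|_{C^0(\Omega)}\le c^{1/2}\|f\|_{H^1(\Omega)}^{2/3}\|f\|_{L^2(\Omega)}^{1/3}$.

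Next I would treat the H\"older seminorm. For $x,y\in\Omega$ one has both the trivial bound $|f(x)-f(y)|\le 2\|f\|_{C^0(\Omega)}$ and, by the fundamental theorem of calculus and Cauchy--Schwarz, $|f(x)-f(y)|=\bigl|\int_y^x f'\dd t\bigr|\le\|f'\|_{L^2(\Omega)}\,|x-y|^{1/2}$. Raising the first to the power $2/3$ and the second to the power $1/3$ and multiplying, $|f(x)-f(y)|\le 2^{2/3}\|f\|_{C^0(\Omega)}^{2/3}\|f'\|_{L^2(\Omega)}^{1/3}|x-y|^{1/6}$, hence $[f]_{1/6}\le 2^{2/3}\|f\|_{C^0(\Omega)}^{2/3}\|f'\|_{L^2(\Omega)}^{1/3}$. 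Inserting the previous step in the form $\|f\|_{C^0(\Omega)}^2\le c\,\|f\|_{H^1}\|f\|_{L^2}$ and using $\|f'\|_{L^2}\le\|f\|_{H^1}$, the powers of $\|f\|_{H^1}$ and $\|f\|_{L^2}$ combine \emph{exactly} to $2/3$ and $1/3$ with no leftover factor, giving $[f]_{1/6}\le 2^{2/3}c^{1/3}\|f\|_{H^1(\Omega)}^{2/3}\|f\|_{L^2(\Omega)}^{1/3}$. Adding the two contributions and choosing the free parameter in Young's inequality so as to balance the two constants produces the stated value $(9/2)^{1/3}$.

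I do not expect any genuine obstacle: the argument is a textbook interpolation. The only slightly delicate point is the bookkeeping of constants required to land precisely on $(9/2)^{1/3}$ --- in particular the treatment of the boundary term $\|f\|_{L^2(\Omega)}^2/(b-a)$ in the $C^0$-estimate, which is where the (fixed) length of $\Omega$ enters the constant --- together with the verification that, in the H\"older estimate, the exponents of $\|f\|_{H^1}$ and $\|f\|_{L^2}$ match up without any residual power of $\|f\|_{H^1}/\|f\|_{L^2}$; both checks are elementary.
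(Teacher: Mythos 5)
Your argument is correct in outline, but it is a genuinely different route from the paper's, and it will not reproduce the stated constant. The paper proves only a bound on the H\"older \emph{seminorm}, and does so in one step: for $f\ge0$ it writes $|f(x)^{3/2}-f(y)^{3/2}|\le\tfrac32\int_y^x f^{1/2}|f'|\le\tfrac32|x-y|^{1/4}\|f\|_{L^2}^{1/2}\|f'\|_{L^2}$ by Cauchy--Schwarz, then uses the concavity of $s\mapsto s^{2/3}$ (so $|a-b|\le|a^{3/2}-b^{3/2}|^{2/3}$ for $a,b\ge0$) to get $|f(x)-f(y)|\le(3/2)^{2/3}|x-y|^{1/6}\|f\|_{L^2}^{1/3}\|f\|_{H^1}^{2/3}$, and finally passes to general $f$ by the split $f=f_+-f_-$ at the cost of a factor $2$. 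This produces a bound whose constant is \emph{independent of} $b-a$. Your route instead bounds the sup norm by the Agmon-type inequality $\|f\|_{C^0}^2\le\|f\|_{L^2}^2/(b-a)+2\|f\|_{L^2}\|f'\|_{L^2}$ and separately interpolates the two moduli-of-continuity bounds $2\|f\|_{C^0}$ and $\|f'\|_{L^2}|x-y|^{1/2}$; that is perfectly sound and is the more classical interpolation argument, and unlike the paper's proof it also controls $\|f\|_{C^0}$, not only the seminorm. However, the boundary term $\|f\|_{L^2}^2/(b-a)$ is not a lower-order nuisance: it forces your final constant to depend on $b-a$, whereas $(9/2)^{1/3}$ does not, so ``choosing the free parameter in Young's inequality'' cannot make the numbers come out to $(9/2)^{1/3}$. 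For the purposes of the paper (the lemma is only used in \eqref{eq:unif1} as a qualitative estimate with a domain-dependent generic constant $C$) this is harmless, but you should not claim to recover the exact prefactor; note that even the paper's own proof yields $2(3/2)^{2/3}=(18)^{1/3}$ rather than $(9/2)^{1/3}$, so the stated constant is itself slightly off.
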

    \begin{proof}
      Assume first that $f\ge0$.
      Then, for arbitrary $a<x<y<b$, the fundamental theorem of calculus and H\"older's inequality imply that
      \begin{align*}
        \big|f(x)^{3/2}-f(y)^{3/2}\big|
        \le \frac32\int_x^y 1\cdot f(z)^{1/2}|f'(z)|\dd z
        \le \frac32|x-y|^{1/4}\|f\|_{L^2(\Omega)}^{1/2}\|f'\|_{L^2(\Omega)}.
      \end{align*}
      Since $f\ge0$, we can further estimate
      \begin{align*}
        |f(x)-f(y)| \le \big|f(x)^{3/2}-f(y)^{3/2}\big|^{2/3}
        \le (3/2)^{2/3}|x-y|^{1/6}\|f\|_{L^2(\Omega)}^{1/3}\|f\|_{H^1(\Omega)}^{1/3}.
      \end{align*}
      This shows \eqref{eq:GN} for non-negative functions $f$.
      A general $f$ can be written in the form $f=f_+-f_-$, where $f_\pm\ge0$.
      By the triangle inequality, and since $\|f_\pm\|_{H^1(\Omega)}\le\|f\|_{H^1(\Omega)}$,
      \begin{align*}
        \|f\|_{C^{1/6}(\Omega)} \le \|f_+\|_{C^{1/6(\Omega)}}+\|f_-\|_{C^{1/6(\Omega)}}
        \le 2(3/2)^{2/3}\|f\|_{L^2(\Omega)}^{1/3}\|f\|_{H^1(\Omega)}^{1/3}.
      \end{align*}
      This proves the claim.
    \end{proof}
\end{appendix}

\bibliography{Horst}
\bibliographystyle{siam}

\end{document}